\newtheorem{theorem}{Theorem}[section]
\newtheorem{lemma}[theorem]{Lemma}
\newtheorem{corollary}[theorem]{Corollary}
\newenvironment{claim}[1]{\par\noindent\textit{Claim:}\space#1}{}
\newenvironment{claimproof}[1]{\par\noindent\textit{Proof:}\space#1}{\leavevmode\unskip\penalty9999 \hbox{}\nobreak\hfill\quad\hbox{$\diamond$}}
\title{Deduction with $k$ moves}
\author{Andrea C.~Burgess\thanks{Department of Mathematics and Statistics, University of New Brunswick, Saint John, NB, Canada.  \linebreak ORCID: 0009-0001-0504-7823.}  
\thanks{Corresponding author.  Email: andrea.burgess@unb.ca.}
\and Nancy E.~Clarke\thanks{Department of Mathematics and Statistics, Acadia University, Wolfville, NS, Canada.\linebreak ORCID: 0000-0002-0597-1717.}
\and Shannon L.~Fitzpatrick\thanks{School of Mathematical and Computational Sciences, University of Prince Edward Island, Charlottetown, PE, Canada. ORCID: 0000-0001-6986-8965.}
\and
Melissa A.~Huggan\thanks{Department of Mathematics, Vancouver Island University, Nanaimo, BC, Canada.\linebreak ORCID: 0000-0001-7923-515X.}
}
\date{\today
}
\begin{document}

\maketitle

\begin{abstract}
The deduction game may be thought of as a variant on the classical game of cops and robber in which the cops (searchers) aim to capture an invisible robber (evader); each cop is allowed to move at most once, and cops situated on different vertices cannot communicate to co-ordinate their strategy. In this paper, we extend the deduction game to allow each searcher to make $k$ moves,
where $k$ is a fixed positive integer. We consider the value of the $k$-move deduction number on several classes of graphs including paths, cycles, complete graphs, complete bipartite graphs, and Cartesian and strong products of paths.
\medskip

{\bf Keywords:} Graph searching, deduction game, cops and robber

{\bf MSC2020:} 05C57

\end{abstract}

\section{Introduction}

The {\em deduction game}, which was introduced in~\cite{BDF2024, Farahani2022}, may be thought of as a variant on the classical game of cops and robber. Cops and robber, first considered in~\cite{NW1983, Q1978}, is a two-player game on a graph, with one player controlling a set of cops and the other controlling a single robber.  The cops and robber are positioned on vertices, and play proceeds in alternating turns; on a turn, players may move their token(s) to an adjacent vertex or pass.  The aim of the cops is to capture the robber by occupying the robber's vertex, while the robber attempts to avoid capture indefinitely.  

While many variants of cops and robber have been proposed (see, for instance,~\cite[Chapter~8]{BonatoNowakowski}), deduction arose from variants which restrict the cops, particularly {\em zero-visibility cops and robber} (see~\cite{DDTY2015a, DDTY2015b, Tang2004, Tosic1985}), in which the cops cannot see the robber's location until the moment of capture, and {\em time-constrained} variants (see~\cite{ADHY2008}), in which the cops must achieve capture within a fixed number of turns in order to win. Deduction adds to these the restriction that cops situated on different vertices cannot communicate to co-ordinate their strategy; thus cops' movements depend on local conditions regarding the vertices in their immediate neighbourhood.  

In the deduction game as initially defined in~\cite{BDF2024,Farahani2022}, cops are termed {\em searchers}.  Each searcher is permitted to move at most once. In this paper, we extend the game to allow searchers $k$ moves, where $k$ is a fixed positive integer.  We view deduction as a multi-stage process.  In the initial set-up, searchers are placed on vertices of a graph.  The initial placement of the searchers is called a {\em layout}.  Note that in a layout, searchers need not be situated on distinct vertices.  Each searcher is initially considered {\em mobile}. A vertex occupied by a searcher is thereafter {\em protected}, while vertices which have not been visited by searchers are {\em unprotected}. 
 At each stage of the game, searchers on a vertex $v$ move to unprotected neighbours of $v$ if the number of mobile searchers on $v$ is at least the number of its unprotected neighbours.  If the number of mobile searchers on $v$ exceeds the number of unprotected neighbours, the excess searchers may move to any of the unprotected neighbours.   
 Any searchers who have moved $k$ times become {\em immobile} and cannot move again. This process repeats until either all vertices are protected or all searchers are immobile.  Note that $k$ restricts the number of times a searcher moves and not the number of stages it takes for this to happen.  That is, while a searcher becomes immobile after having moved $k$ times, this need not occur in the $k^{\mathrm{th}}$ stage of the game.  As well, two searchers may become immobile at different stages.
 
 The searchers win if at the end of the game, all the vertices are protected; in this case we call the initial layout {\em successful}.  The {\em $k$-move deduction number} of the graph $G$, denoted $d_k(G)$, is the minimum number of searchers for which there exists a successful layout on $G$. If $k=1$, then we omit it from the notation and simply write $d(G)$.

The {\em $k$-move capture time} of a graph $G$ is the minimum number of stages that it takes for all vertices to become protected over all successful layouts with $d_k(G)$ searchers.  If the value of $k$ is clear, we will simply refer to {\em capture time}.  Note that if the $k$-move capture time is equal to $1$, then $d_k(G)=d(G)$.  For this reason, we are primarily interested in cases where the capture time is greater than $1$.

As an illustration of the deduction process, consider the graph $G$ illustrated below.
\begin{center}
\begin{tikzpicture}[x=1cm,y=1cm,scale=1]
\draw (0,1) -- (1,0); \draw (0,-1) -- (1,0);
\draw (1,0) -- (3,0);
\draw (3,0) -- (4,1); \draw (3,0) -- (4,-1);

\draw[fill=black](0,1) circle (3pt) node[left, inner sep=5pt]{$t$};
\draw[fill=black](0,-1) circle (3pt) node[left, inner sep=5pt]{$v$};
\draw[fill=black](1,0) circle (3pt) node[left, inner sep=5pt]{$u$};
\draw[fill=black](2,0) circle (3pt) node[below, inner sep=5pt]{$w$};
\draw[fill=black](3,0) circle (3pt) node[right, inner sep=5pt]{$x$};
\draw[fill=black](4,1) circle (3pt) node[right, inner sep=5pt]{$y$};
\draw[fill=black](4,-1) circle (3pt) node[right, inner sep=5pt]{$z$};
\end{tikzpicture}
\end{center}
Using~\cite[Theorem 5.2]{BDF2024}, it can be seen that $d(G)=5$; a successful layout in $1$-move deduction is given by placing searchers on vertices $t$, $v$, $w$, $y$ and $z$.  
By contrast, if searchers may move twice, three searchers can protect all vertices.  Placing searchers on vertices $t$, $y$ and $w$, in the first stage searchers move from $t$ and $y$ to $u$ and $x$, respectively, and in the second stage the same searchers move to $v$ and $z$. It is easy to see that no successful layout is possible with two searchers, so $d_2(G)=3$.  Similarly we obtain $d_3(G)=3$.  However, moving to $4$-move deduction there is a successful layout with two searchers so that $d_4(G)=2$.  To see this, place searchers on $t$ and $v$.  In the first stage, both searchers move to $u$.  In the second stage, one of these searchers must move to $w$; the second may also do so.  Subsequently, both may move to $x$, and in the final stage one searcher moves to $y$ and the other to $z$, clearing the graph.

The paper~\cite{BDF2024} introduces the $1$-move deduction game, gives various bounds on $d(G)$ and considers the value of the $1$-move deduction number on several classes of graphs including Cartesian products and trees. 
The paper~\cite{BDOWXY} explores connections between deduction and constrained versions of other graph processes, namely zero forcing and fast-mixed search, and gives a structural characterization of graphs with given deduction number and additional lower bounds on $d(G)$.  Nevertheless, determining the value of $d(G)$ is NP-complete~\cite{BDOWXY}.  

The following theorem summarizes the bounds on the $1$-move deduction number from~\cite{BDF2024, BDOWXY}.
\begin{theorem}[\cite{BDF2024,BDOWXY}] \label{1-move bounds}
In any connected graph $G$ of order $n \geq 3$ which contains $\ell$ vertices of degree $1$, 
\[
\max\left\{\left\lceil \frac{n}{2} \right\rceil, \ell, \delta(G), \alpha(G), \omega(G)-1\right\}
\leq d(G) \leq n-1,
\]
where $\alpha(G)$ is the independence number of $G$ and $\omega(G)$ is the clique number of $G$.
\end{theorem}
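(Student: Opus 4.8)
The plan is to establish each of the five quantities inside the maximum as a lower bound separately, and to dispatch the upper bound by an explicit layout. Throughout I will lean on one structural feature of $1$-move deduction: a searcher is mobile exactly while it still sits on its starting vertex, so the number of mobile searchers on a vertex $v$ at any stage is at most the number of searchers placed on $v$ in the layout, and a vertex sends searchers to its unprotected neighbours (I will say it \emph{activates}) only when its mobile count is at least its number of currently unprotected neighbours. In particular, before the first activation the set of protected vertices is exactly the layout.

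For the upper bound $d(G)\le n-1$, place one searcher on every vertex except a single vertex $v$; since $G$ is connected with $n\ge 3$, $v$ has a neighbour $u$, and $u$ has exactly one unprotected neighbour (namely $v$) and one mobile searcher, so $u$ activates and $v$ becomes protected. For $d(G)\ge\lceil n/2\rceil$, observe that each searcher occupies at most two vertices over the whole game (its start and its at most one move), and the protected set is the union of these occupied sets, so $n\le 2d(G)$. The bounds $d(G)\ge \ell$ and $d(G)\ge\alpha(G)$ both come from an injective assignment: to each leaf (respectively, each vertex of a maximum independent set) assign a searcher that either starts there or moves onto it. Two distinct special vertices cannot share a searcher, since a searcher has one starting vertex and at most one landing vertex; the only remaining possibility, a searcher that starts on one special vertex and moves onto the other, forces those two vertices to be adjacent, which is impossible for an independent set and for two leaves would create a $K_2$ component, contradicting connectivity with $n\ge 3$. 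For $d(G)\ge\delta(G)$, I consider the first vertex $w$ to activate (if none ever does, the layout must be all of $V$ and $d(G)\ge n$). At that moment no searcher has moved, so the protected vertices are exactly the layout $L$ and $w$ still carries all of its placed searchers; activation gives $c_w\ge \deg(w)-|N(w)\cap L|$, where $c_w$ is the number of searchers on $w$. The $c_w$ searchers on $w$ together with one searcher on each of the $|N(w)\cap L|$ occupied neighbours are distinct, so $d(G)\ge c_w+|N(w)\cap L|\ge\deg(w)\ge\delta(G)$.

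The main obstacle is the clique bound $d(G)\ge\omega(G)-1$, where naive counting yields only $\omega(G)/2$ and the activation rule must genuinely be used. Fix a maximum clique $Q$ and, in a successful layout, let $A$ be the set of clique vertices occupied by the layout and $B=Q\setminus A$ the clique vertices first protected by a move. Each vertex of $B$ is reached by a distinct searcher (distinct landing vertices), and I split $B$ according to whether that searcher was fired from outside $Q$ (set $B_{\mathrm{ext}}$) or from within $Q$ (set $B_{\mathrm{int}}$); the former searchers start outside $Q$ and are therefore disjoint from the searchers starting on $Q$, which gives $d(G)\ge |A|+|B_{\mathrm{ext}}|$. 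Because $Q$ is a clique, whenever a clique vertex activates it has every currently unprotected clique vertex among its unprotected neighbours, so by the structural fact that vertex was placed with at least that many searchers. Applying this to the \emph{first} clique vertex to activate, when all of $B_{\mathrm{int}}$ together with the clique vertices not yet reached from outside are still unprotected, pins down enough placed searchers on that vertex; combining this count with the disjoint external searchers and one searcher on each remaining occupied clique vertex yields $d(G)\ge\omega(G)-1$ in every case (indeed, when $B_{\mathrm{int}}=\varnothing$ one even gets $\omega(G)$). The delicate step, which I expect to require the most care, is the bookkeeping that sets the first-activation count against the clique vertices already protected from outside, so that the three groups of searchers are provably disjoint and their sizes sum to at least $\omega(G)-1$.
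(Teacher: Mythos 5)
Your proposal is correct, but there is an important mismatch of expectations: the paper does not prove this theorem at all. It is quoted from the references \cite{BDF2024,BDOWXY} as background, so there is no in-paper proof to compare against; the closest material is Lemma~\ref{lem:gen_bound_1} (and the unproved Lemma~\ref{lemma:clique}), the $k$-move analogues of three of the bounds. For those three your arguments coincide with the paper's when specialized to $k=1$: each searcher protects at most two vertices (giving $n\le 2d(G)$), the first vertex to activate certifies $d(G)\ge\delta(G)$, and the all-but-one-vertex layout gives $d(G)\le n-1$. The parts the paper leaves entirely to the references — $d(G)\ge\ell$, $d(G)\ge\alpha(G)$, and $d(G)\ge\omega(G)-1$ — you prove from scratch, and your arguments are sound. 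In particular, the clique bound, which you flag as the delicate step and leave partly sketched, does close exactly as you outline: if $B_{\mathrm{int}}=\varnothing$, the searchers starting on $A$ and the $|B_{\mathrm{ext}}|$ external searchers (distinct landing vertices, starting off $Q$) are pairwise distinct, giving $d(G)\ge|A|+|B_{\mathrm{ext}}|=\omega(G)$; otherwise some clique vertex fires, and at the \emph{first} stage $t$ at which one does, every vertex of $B_{\mathrm{int}}$ is still unprotected (only intra-clique moves ever protect them, and none occurred before $t$), so the firing vertex $w\in A$ carries $c_w\ge|B_{\mathrm{int}}|$ placed, still-mobile searchers, whence $d(G)\ge(|A|-1)+c_w+|B_{\mathrm{ext}}|\ge|A|-1+|B_{\mathrm{int}}|+|B_{\mathrm{ext}}|=\omega(G)-1$. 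The injectivity arguments for $\ell$ and $\alpha(G)$ are likewise correct: a searcher visits at most two vertices and, if two, they are adjacent, which is impossible for two independent vertices, and for two leaves would force a $K_2$ component in a connected graph of order $n\ge 3$. So your write-up stands on its own as a complete proof of a statement the paper only cites.
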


In this paper, we consider deduction with $k \geq 2$ moves.  In Section 2, we verify that given any successful layout, with the seachers and evaders moving in turn as described in the previous section, the searchers will always capture the evader.   In Section 3, we find the $k$-move deduction number for common families of graphs such as paths, cycles, complete graphs and complete bipartite graphs.   Finally, in Sections 3 and 4 we consider $k$-move deduction for the Cartesian and Strong Products of graphs.

\section{$k$-move deduction as a search model}

Whether we consider the $k$-move deduction model as a cops and robber game or a search problem, we need to demonstrate that every protected vertex can never be safely occupied by a robber or evader.  For $k$-move deduction, it suffices to show that all vertices are eventually protected and if a robber/evader ever occupies a protected vertex, then that vertex is also occupied by a cop/searcher. 

Suppose a set of searchers have a successful layout in a graph $G$ and the capture time is at least two.  Consider any  stage of the game except for the final  stage.  The vertices of $G$ can be partitioned into two sets $V_p$ and $V_u$, where $V_p$ is the set of protected vertices and $V_u$ is the set of unprotected vertices.    Let $\partial (V_p)$ denote the set of vertices in $V_p$ that have a neighbour in $V_u$.  We refer to $\partial (V_p)$ as the {\it  boundary} at that  stage.

\begin{lemma}\label{lem:boundary}
Suppose a set of searchers have a successful layout in $G$ and the capture time is at least two.  At any  stage of the game, except for the final  stage, every vertex in the boundary is occupied by at least one searcher.
\end{lemma}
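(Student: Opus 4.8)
The plan is to argue by induction on the stage index, showing that the stated property is preserved from one stage to the next. Throughout I will use the fact, implicit in the definition of the game, that protection is permanent: once a vertex has been visited it lies in $V_p$ forever, so the unprotected set $V_u$ only shrinks as play proceeds. I take the configuration ``at stage $t$'' to be the partition $(V_p,V_u)$ from which the moves of that stage are computed, with stage $1$ corresponding to the initial layout. For the base case, in the initial layout the protected vertices are exactly those occupied by a searcher, so in particular every boundary vertex is occupied. This is the only point at which ``protected'' and ``occupied'' coincide, and everything afterwards rests on controlling how a vertex can become protected yet vacated.

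The engine of the induction is an observation about how a vertex leaves the boundary. Suppose $v$ lies in the boundary at some stage, so $v\in V_p$ and $v$ has $d\ge 1$ unprotected neighbours. By the movement rule, the mobile searchers on $v$ move precisely when their number $m$ satisfies $m\ge d$, and in that case they move so that each of the $d$ unprotected neighbours receives at least one searcher, the excess $m-d$ searchers also moving. Hence this is an all-or-nothing event for the mobile searchers on $v$: either none moves, or they all move and in doing so protect every previously unprotected neighbour of $v$. Using permanence of protection, in the latter case $v$ has no unprotected neighbour at the following stage and so drops out of the boundary. Contrapositively, if $v$ is still in the boundary at the next stage, then no searcher left $v$, so $v$ retains whatever searcher (mobile or immobile) it had.

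With this in hand the inductive step is short. Assume the claim at stage $t$, and let $w$ be a boundary vertex at stage $t+1$, so $w\in V_p$ and $w$ has a neighbour $u\in V_u$ at stage $t+1$. If $w$ was unprotected at stage $t$, then it became protected only by a searcher moving onto it during stage $t$, so $w$ is occupied at stage $t+1$. Otherwise $w\in V_p$ already at stage $t$; since $u$ is unprotected at stage $t+1$ and $V_u$ only shrinks, $u$ was unprotected at stage $t$ as well, so $w$ was a boundary vertex at stage $t$ and, by the inductive hypothesis, carried a searcher. Were that searcher to have moved, the leaving-the-boundary observation would force every unprotected neighbour of $w$, in particular $u$, to have become protected, contradicting $u\in V_u$ at stage $t+1$. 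Hence the searcher stayed and $w$ is occupied, completing the induction.

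The step I expect to demand the most care is the leaving-the-boundary observation, since this is where the precise reading of the movement rule matters: one must use that moving is all-or-nothing for the mobile searchers on a vertex, that a move covers every unprotected neighbour, and that protection never lapses, while also tracking immobile searchers (which keep a vertex occupied without moving) and ruling out the only remaining alternative, namely a protected vertex being completely vacated. That last situation can occur only after the vertex's final unprotected neighbour has already been covered, that is, after the vertex has permanently left the boundary, so it never threatens the invariant.
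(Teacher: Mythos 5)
Your proof is correct and rests on the same key fact as the paper's own proof: by the movement rule, searchers vacate a vertex $v$ only when every unprotected neighbour of $v$ becomes protected, so (by permanence of protection) a vertex that is ever vacated has left the boundary for good. The paper packages this as a contradiction argument tracing back to the stage $t'+1$ at which the searcher departed, whereas you run a forward induction on stages with the occupancy invariant; these are essentially the same argument in different clothing.
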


\begin{proof}
Since stage $t$ is not the final stage of the game, $V_p$ and $V_u$ are both non-empty.  Hence,  $\{V_p,V_u\}$ is a partition of $V(G)$.   

If $t=1$, then $V_p$ represents the set of vertices in the initial layout.  Hence, every vertex of $V_p$ is occupied by a searcher.

Assume $t \ge 2$ and there is some vertex $v \in \partial (V_p)$ that is not occupied by a searcher.  It follows that $v$ was occupied by a searcher at some previous  stage.  Suppose $v$ was occupied by a searcher at stage $t'$, where $1 \le t' <t$ but that searcher moved off of $v$ at stage $t'+1$.  However, the set of searchers on $v$ would only move from $v$ if they could move onto every unprotected neighbour at that stage.  Hence, at stage $t'+1$ every vertex in $N[v]$ would be protected.  Hence, $N[v] \subseteq V_p$ and $v \not \in \partial(V_p)$, which is a contradiction.  
\end{proof}

\begin{corollary}
Given an initial successful layout in $G$, any vertex, once protected, cannot be occupied by an evader without the evader occupying the same vertex as some searcher. 
\end{corollary}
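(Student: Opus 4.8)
The plan is to derive the corollary from Lemma~\ref{lem:boundary} by an induction on the stage $t$, maintaining the invariant that an evader which has not yet been captured always sits on an unprotected vertex. The corollary is then exactly the contrapositive: if the evader occupies a protected vertex, the invariant forces it to share that vertex with a searcher, i.e.\ to have been captured. Before writing the induction I would fix the turn order, taking the searchers to move first at each stage (as in the underlying cops-and-robber convention) and the evader to respond afterwards, and I would record that ``protected'' is a monotone property, so that $\{V_p, V_u\}$ is a genuine partition at every stage and a vertex never reverts from protected to unprotected.

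For the base case, before any moves $V_p$ is precisely the layout, every vertex of which carries a searcher; thus an evader on a protected vertex is on a searcher's vertex. For the inductive step I would assume the invariant through stage $t$, so an uncaptured evader occupies some $a \in V_u$, and then analyse the stage-$(t+1)$ transition. There are two ways the evader can come to stand on a protected vertex. In the first, a searcher moves onto $a$ itself while the evader is still there; this protects $a$ but simultaneously places a searcher on the evader, so the conclusion holds. In the second, the evader moves from $a$ to an adjacent protected vertex $w$; since $a \in V_u$ and $a \sim w \in V_p$, the vertex $w$ belongs to the boundary $\partial(V_p)$, and Lemma~\ref{lem:boundary} guarantees a searcher on $w$, so once again the evader lands on a searcher.

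The step I expect to be the main obstacle is the timing bookkeeping at the boundary, together with the final stage, which Lemma~\ref{lem:boundary} explicitly excludes. Because searchers move within a stage, the boundary $\partial(V_p)$ and the sets $V_p, V_u$ shift as the stage progresses, so I would be careful to apply the lemma to the configuration that is actually current when the evader moves, and to confirm that an evader crossing from $V_u$ into $V_p$ necessarily enters a boundary vertex rather than an interior protected vertex. For the final stage, where $V_u$ empties, the argument is slightly different: every remaining unprotected vertex becomes protected exactly because a searcher moves onto it, so an uncaptured evader---which by the invariant is sitting on such a vertex---is caught the moment the searcher arrives, and an evader already on a previously protected vertex was captured earlier by the inductive hypothesis. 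Assembling these pieces, and noting that a successful layout guarantees every vertex is eventually protected, completes the proof.
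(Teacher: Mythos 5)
Your proof is correct and takes essentially the same route as the paper: the paper likewise assumes searchers move before the evader, considers the first stage at which the evader would stand on a protected vertex, and invokes Lemma~\ref{lem:boundary} to conclude that either that vertex still carries a searcher or the searchers leaving it were forced to cover the evader's current unprotected vertex, capturing it there. Your induction-with-invariant phrasing, explicit turn-order bookkeeping, and separate final-stage case are just a more carefully spelled-out version of the paper's minimal-counterexample argument.
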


\begin{proof}
Suppose not.  Suppose stage $t$ is the first time an evader moves onto a protected vertex $v$. It follows that $t \ge 2$, since only those vertices occupied by searchers are protected at stage 1.  Therefore, the evader occupied an unprotected $u$ vertex at stage $t-1$.  If $v$ was protected  at stage $t-1$, it follows that it was a boundary vertex, and therefore occupied by a searcher at stage $t-1$. Therefore, at stage $t$, $v$ is either occupied by a searcher or searchers moved from $v$ to all unprotected vertices in $N(v)$.  Therefore, a searcher and evader both occupy vertex $u$ at some stage of the game.  \end{proof}

\section{Initial Results}

We begin this section by considering how the bounds of Theorem~\ref{1-move bounds} generalize to the $k$-move case. 

\begin{lemma}\label{lem:gen_bound_1}
If $G$ has order $n$, then $d_k(G) \geq \left\lceil \frac{n}{k+1}\right\rceil$
and  $d_k(G) \ge \delta (G)$.  Moreover, if $G$ is connected, then $d_k(G) \leq n-1$.
\end{lemma}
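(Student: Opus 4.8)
The statement bundles three inequalities, so the plan is to treat them separately; the two outer bounds are short, and the minimum-degree bound is where the real work lies.

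For the lower bound $d_k(G)\ge \left\lceil n/(k+1)\right\rceil$, the plan is a pure counting argument. A single searcher occupies its starting vertex and then changes position at most $k$ times, so over the whole game it visits at most $k+1$ vertices and hence is responsible for protecting at most $k+1$ vertices. Since a vertex is protected exactly when some searcher has occupied it, the set of all protected vertices is the union of these visit-sets, so with $s$ searchers at most $s(k+1)$ vertices can ever be protected. A successful layout protects all $n$ vertices, forcing $s(k+1)\ge n$ and hence $s\ge \left\lceil n/(k+1)\right\rceil$. For the upper bound, assuming $n\ge 2$, I would exhibit an explicit layout: choose any vertex $v$ and place one searcher on each of the remaining $n-1$ vertices. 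Because $G$ is connected, $v$ has an occupied neighbour $w$, and at the first stage $v$ is the unique unprotected neighbour of $w$, so the single searcher on $w$ satisfies the movement condition and steps onto $v$, protecting everything in one stage. This uses only one move, so it is valid for every $k\ge 1$ and gives $d_k(G)\le n-1$.

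The crux is $d_k(G)\ge\delta(G)$, and here the plan is to examine the final stage of a successful layout. If every vertex is already protected by the layout then $s\ge n\ge\delta(G)$ trivially, so assume the capture time is at least two and let $U$ be the (nonempty) set of vertices still unprotected immediately before the final stage. Fix any $u\in U$ and partition its neighbourhood as $N(u)=(N(u)\cap B)\sqcup(N(u)\cap U)$, where $B$ is the boundary; by Lemma~\ref{lem:boundary} every vertex of $N(u)\cap B$ is occupied just before the final stage. I would then split $N(u)\cap B$ into the set $B_1$ of neighbours from which a searcher moves during the final stage and the set $B_2$ of those from which none moves, and count searchers by their positions \emph{after} the final stage on the three pairwise-disjoint vertex sets $\{u\}$, $B_2$, and $N(u)\cap U$. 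Each vertex of $B_2$ retains its searcher, contributing $|B_2|$; each vertex of $N(u)\cap U$ is itself cleared during the final stage and so acquires a searcher, contributing $|N(u)\cap U|$; and, crucially, every vertex of $B_1$ fires onto \emph{all} of its unprotected neighbours, $u$ among them, so $u$ receives at least $|B_1|$ searchers. As these three sets of vertices are disjoint, the total number of searchers is at least $|B_1|+|B_2|+|N(u)\cap U|=|N(u)\cap B|+|N(u)\cap U|=\deg(u)\ge\delta(G)$.

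I expect the minimum-degree inequality to be the only genuine obstacle. The tempting shortcuts — bounding the number of searchers by the boundary size $|B|$, or by tracking only the last vertex to be protected — each fail, because the final stage may clear several mutually adjacent vertices simultaneously and the boundary can be small. The idea that makes the count work is to evaluate searcher positions \emph{after} the final stage and to exploit the all-or-nothing nature of the movement rule: a protected neighbour of $u$ that releases even one searcher must simultaneously send a searcher onto $u$, which is exactly what lets the contributions from $\{u\}$, $B_2$, and $N(u)\cap U$ be added without double-counting.
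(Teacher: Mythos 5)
Your proof is correct, and for the crux $d_k(G)\ge\delta(G)$ you take a genuinely different route from the paper. The paper argues at the \emph{first} stage: unless the initial layout already covers every vertex (in which case $d_k(G)\ge n>\delta(G)$), some searcher must move off a vertex $v$ at stage $1$; the movement rule then forces at least as many searchers on $v$ as $v$ has unoccupied neighbours, and each occupied neighbour of $v$ holds its own searcher, so these disjoint contributions already total $\deg(v)\ge\delta(G)$ with no auxiliary lemma needed. You instead argue at the \emph{final} stage, combining Lemma~\ref{lem:boundary} with the all-or-nothing firing rule to count searchers by their positions after the last stage on the disjoint sets $\{u\}$, $B_2$ and $N(u)\cap U$; this is heavier machinery but equally valid, and it isolates the same structural fact the paper uses, namely that a vertex releasing a searcher must simultaneously cover all of its unprotected neighbours. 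One small repair is needed: excluding the case where the layout covers all of $V(G)$ only yields capture time at least \emph{one}, not two. If the capture time is exactly one, your count goes through verbatim with ``immediately before the final stage'' meaning the initial layout, where boundary vertices are occupied by definition rather than by Lemma~\ref{lem:boundary} (which the paper states only for capture time at least two). Your treatments of the other two bounds, $d_k(G)\ge\left\lceil \frac{n}{k+1}\right\rceil$ by counting at most $k+1$ visited vertices per searcher and $d_k(G)\le n-1$ via the all-but-one-vertex layout, coincide with the paper's.
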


\begin{proof}
To see that $d_k(G) \geq \left\lceil \frac{n}{k+1}\right\rceil$, note that each searcher can protect at most $k+1$ vertices: the vertex where they start, as well as up to $k$ that they move to.

A searcher can move from a vertex $v$ in the first stage only if the number of unoccupied vertices in $N(v)$ is less than or equal to the number of searchers on $v$ in the initial layout.  Therefore, $|N(v)| \le d_k(G)$ and $d_k(G) \ge \delta (G)$.

For any connected graph $G$, placing a searcher on all but one vertex gives a successful layout in $k$-move deduction for any $k \geq 1$, showing $d_k(G) \leq n-1$.
\end{proof}

Additionally, a similar argument to~\cite[Theorem~4.5]{BDF2024} shows the following.
\begin{lemma} \label{lemma:clique}
If $G$ is a graph of order $n$, then $d_k(G) \geq \omega(G)-1$.
\end{lemma}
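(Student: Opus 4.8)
The plan is to show that the clique number provides a lower bound, adapting the argument used for $1$-move deduction in~\cite[Theorem~4.5]{BDF2024}. Let $K$ be a clique in $G$ of size $\omega(G)$, and suppose for contradiction that we have a successful layout using fewer than $\omega(G)-1$ searchers, that is, at most $\omega(G)-2$ searchers in total. The key structural observation is that the vertices of $K$ are pairwise adjacent, so while vertices of $K$ remain unprotected, each one acts as an unprotected neighbour of every other vertex of $K$. I would track how the searchers can protect the vertices of $K$ over the course of the game and count how many searchers are needed to do so.

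First I would focus on what happens the last time an unprotected vertex of $K$ becomes protected. Consider the stage at which the final vertex (or vertices) of $K$ gets protected, and look at the situation immediately beforehand, when some set $U \subseteq K$ of vertices is still unprotected. A vertex $u \in U$ becomes protected either because a searcher is sitting on it already (impossible, since it is unprotected) or because a searcher moves onto it from a neighbour. The crucial point, echoing Lemma~\ref{lem:boundary}, is that a searcher only leaves a vertex $v$ when it can cover all unprotected neighbours of $v$; I would use the clique structure to argue that protecting the vertices of $U$ requires enough distinct searchers to cover them essentially simultaneously. Since all vertices of $U$ are mutually adjacent and unprotected, any vertex $v$ outside $U$ but adjacent to $U$ sees all of $U$ as unprotected neighbours, forcing a large number of searchers to be present in order for movement onto $U$ to occur.

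Then I would make the counting precise. The standard trick is that when the last $|U|$ vertices of the clique get filled, the searchers that accomplish this, together with at least one vertex of $K$ that was protected earlier and still holds (or was the source of a move), account for at least $\omega(G)-1$ searchers; the ``$-1$'' arises because one of the clique vertices can serve as the launching point rather than requiring a separate searcher. The main obstacle I anticipate is handling the timing carefully across stages: unlike the $1$-move game, searchers may arrive at clique vertices over several stages and from various directions, so I must rule out the possibility that a single searcher protects two different vertices of $K$ at two different stages in a way that cheats the count. To close this gap I would argue that at the moment any vertex of $K$ transitions from unprotected to protected, a distinct searcher is committed to it and cannot later be reused to protect another still-unprotected clique vertex without violating the movement rule (a searcher leaving a clique vertex would require that vertex's unprotected clique-neighbours to already be covered). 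Establishing this ``no double-duty'' property within the clique is the heart of the argument; once it holds, a direct injection from the unprotected clique vertices to distinct searchers, plus accounting for the at least one clique vertex protected from outside, yields $d_k(G) \ge \omega(G)-1$.
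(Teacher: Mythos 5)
Your overall skeleton---fix a maximum clique $K$, examine the last stage $T$ at which vertices of $K$ become protected, and use the movement rule together with boundary-type reasoning to count searchers---is the right one, and it matches the argument the paper gestures at (the paper gives no details, only citing the $1$-move argument of~\cite[Theorem~4.5]{BDF2024}). However, the claim you identify as the heart of your argument, the ``no double-duty'' property, is false, and it cannot be repaired as stated: if no searcher could ever protect a second still-unprotected clique vertex, your injection would prove $d_k(G) \ge \omega(G)$, which is contradicted by $d_k(K_n)=n-1$ (Lemma~\ref{lem:path_cycle_complete}), where every searcher protects its starting vertex and then moves onto the single remaining unprotected vertex of the clique. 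What the movement rule actually gives is weaker: if a searcher leaves a clique vertex $v$, then at that very stage the searchers on $v$ must move onto \emph{all} unprotected neighbours of $v$, which include all still-unprotected vertices of $K$. Hence double duty is possible, but only at the final stage $T$, and only if the source vertex $v$ carries at least $|U|$ searchers, where $U$ is the set of clique vertices still unprotected entering stage $T$. It is this surplus on the source vertex, not injectivity, that saves the count: at the end of stage $T-1$ every vertex of $K\setminus U$ is occupied (by Lemma~\ref{lem:boundary}, since each is adjacent to every vertex of $U$); if some searcher enters $U$ from a vertex $v\in K\setminus U$, then $v$ alone holds at least $|U|$ searchers, for a total of at least $|U|+(\omega(G)-|U|-1)=\omega(G)-1$; if instead every searcher entering $U$ at stage $T$ comes from outside $K$, those arrivals are distinct from the occupiers of $K\setminus U$, giving at least $\omega(G)$ searchers. (The degenerate cases $U=K$ and $U=\emptyset$ give $\omega(G)$ directly.) Note also that your explicit count---``the searchers that accomplish this, together with at least one vertex of $K$ that was protected earlier''---adds up to only $|U|+1$, which falls short of $\omega(G)-1$ whenever $|U|$ is small.

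A second concrete error: your assertion that ``any vertex $v$ outside $U$ but adjacent to $U$ sees all of $U$ as unprotected neighbours'' is only true for $v\in K\setminus U$. A vertex outside the clique that is adjacent to one vertex of $U$ need not be adjacent to any other, so nothing forces simultaneity on searchers entering the clique from outside $K$; indeed clique vertices can be protected one at a time, at different stages, by searchers marching in from outside. Such arrivals must be handled by the disjointness count above (each outside arrival is a searcher distinct from those occupying $K$), not by a forced-coverage argument. With these two repairs the last-stage case analysis closes the proof; as written, your argument does not go through.
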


The bounds of Lemmas~\ref{lem:gen_bound_1} and~\ref{lemma:clique} are tight for every value of $k$.  To see this, note that the path $P_n$ of order $n$ has $d_k(P_n)=\left\lceil \frac{n}{k+1}\right\rceil$ and the complete graph $K_n$ has $d_k(K_n) = \delta(K_n) = \omega(K_n)-1=n-1$.

We note, however, that not all properties of the $1$-move game translate well into the $k$-move variant, including several of the other lower bounds from Theorem~\ref{1-move bounds}.  For example, the path $P_{k+1}$ of length $k$ shows that $d_k(G)$ is not bounded below by $\alpha(G)$ or the number $\ell$ of vertices of degree $1$ in $G$ in general, as in this instance $d_k(P_{k+1}) = 1$ but $\alpha(P_{k+1})=\lceil\frac{k+1}{2}\rceil$ and $\ell=2$.

As another example to illustrate the differences between $1$-move and $k$-move deduction, it is shown in~\cite{BDOWXY} that if $H$ is an induced subgraph of $G$, then $d(H) \leq d(G)$.  The following result shows that it is possible to have an induced subgraph $H$ for which $d_k(H)>d_k(G)$ if $k >1$.  

\begin{lemma}
  For any $k \ge 8$, there exists a graph $G$ with induced subgraphs $H_1$ and $H_2$ such that $d_k(H_1) < d_k(G)$ and $d_k(H_2) > d_k(G)$.
\end{lemma}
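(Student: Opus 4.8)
The plan is to exploit the fact that the statement imposes no connectivity on $G$, $H_1$, or $H_2$, and to produce the whole separation inside one very small graph. Concretely, I would take $G = K_{1,3}$, the star with centre $c$ and leaves $\ell_1,\ell_2,\ell_3$, and set $H_1 = G - \ell_3 \cong P_3$ and $H_2 = G - c \cong 3K_1$ (three isolated vertices). The objective is to show, for every $k \ge 2$ (hence in particular for every $k \ge 8$), that $d_k(H_1)=1$, $d_k(G)=2$, and $d_k(H_2)=3$, which gives $d_k(H_1)<d_k(G)<d_k(H_2)$ simultaneously and so establishes the claim with room to spare.

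For the easy values and the upper bound on $G$: since each isolated vertex can only be protected by a searcher placed on it, trivially $d_k(3K_1)=3$. For $H_1=P_3$, a single searcher placed at one end sweeps to the other end in two moves, protecting all three vertices; as $k\ge 2$ this is legal, so $d_k(P_3)=1$. For the upper bound $d_k(G)\le 2$, I would place searchers on $\ell_1$ and $\ell_2$: in the first stage each has its unique neighbour $c$ unprotected, so both move to $c$; now $c$ carries two mobile searchers and has exactly one unprotected neighbour $\ell_3$, and since $2>1$ one searcher moves to $\ell_3$ while the other may remain, protecting every vertex. This uses two moves, which is exactly why $k\ge 2$ is needed.

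The crux is the lower bound $d_k(G)\ge 2$, and this is the one point where the movement rule, rather than a counting estimate, is required: a lone searcher starting on $c$ sees three unprotected neighbours and cannot move, protecting only $c$; a lone searcher starting on a leaf reaches $c$ in one move but then faces two unprotected neighbours with only one searcher present and so stalls at $c$, protecting at most two vertices. Since $|V(G)|=4$, one searcher never suffices, so $d_k(G)=2$. I expect this stalling argument to be the delicate step to write carefully, since the bounds of Lemma~\ref{lem:gen_bound_1} are useless here: $\delta(G)=1$ and $\left\lceil 4/(k+1)\right\rceil=1$ for $k\ge 3$ (in particular for $k\ge 8$), so the second searcher is forced purely by the redistribution constraint at the centre, and one must verify that no schedule lets a single searcher escape $c$.

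To make the gap arbitrarily large and manifestly independent of $k$, I would replace $K_{1,3}$ by the odd star $K_{1,2r+1}$ with $r\ge 1$. The analogous two-stage strategy, sending $r+1$ leaf-searchers to the centre and then redistributing to the remaining $r$ leaves, gives $d_k(K_{1,2r+1})=r+1$ for $k\ge 2$, while $H_1=K_{1,2r}$ gives $r$ and $H_2=(2r+1)K_1$ gives $2r+1$. The general evaluation $d_k(K_{1,p})=\left\lceil p/2\right\rceil$ for $k\ge 2$ is the single genuinely non-trivial ingredient, and it follows from the same centre-redistribution analysis as above: any searcher reaching a leaf other than its start must pass through $c$, and the number that can leave $c$ in one stage is at most the number present there, forcing at least $\left\lceil p/2\right\rceil$ starters. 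The increase $d_k(H_2)>d_k(G)$ comes precisely from deleting the centre, which destroys the gathering-and-redistribution strategy; since no connectivity of $H_1$ or $H_2$ is demanded, this is exactly what the statement allows, and as the whole argument runs for all $k\ge 2$ it certainly covers all $k\ge 8$.
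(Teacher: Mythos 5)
Your proof is correct, but it takes a genuinely different route from the paper. You read the statement literally---no connectivity is imposed on $G$, $H_1$, or $H_2$---and exploit that: all three of your computed values check out under the movement rule ($d_k(P_3)=1$ and $d_k(K_{1,3})=2$ for $k\ge 2$, since a lone searcher stalls at the centre facing two unprotected leaves, and $d_k(3K_1)=3$ since searchers on isolated vertices can never move), so $K_{1,3}$ with $H_1=P_3$ and $H_2=3K_1$ settles the claim for every $k\ge 2$, which is stronger than the stated range $k\ge 8$. The paper instead builds a spider $T_m$ with $m=\lfloor k/2\rfloor$ legs of length $m$, shows $d_k(T_m)=\lceil m/2\rceil$, and obtains the increase from the \emph{connected} induced subgraph $T_m'$ whose branches have pairwise distinct lengths $1,\dots,m$: distinct lengths force any two leaf-started searchers to reach the centre at different stages, and once the first arrives the centre is protected and blocks the second, so $d_k(T_m')\ge m-1$; the decrease comes from $T_m''$ (two branches deleted). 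What each approach buys: yours is far more elementary, needs only a four-vertex graph, and covers all $k\ge 2$; but the increase $d_k(H_2)>d_k(G)$ is produced by disconnection---isolated vertices trivially each require their own searcher---rather than by the game's dynamics. The paper's construction proves the stronger, more informative fact that monotonicity under induced subgraphs fails even within connected graphs, with a gap ($\,d_k(T_m')-d_k(T_m)\approx m/2\,$) that grows with $k$; that is precisely why it needs $k\ge 8$ (so that $m\ge 4$). Your generalization to $K_{1,2r+1}$ also makes the gap large, and its lower bound $d_k(K_{1,p})\ge\lceil p/2\rceil$ is essentially right, though as written it elides the case analysis (centre initially occupied versus not, and the fact that once the centre is protected no leaf searcher can move onto it); for the main claim this does not matter, since the $K_{1,3}$ computation is a complete finite check.
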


\begin{proof}
For any $k \ge 8$, let  $m=\lfloor \frac{k}{2} \rfloor$ and $T_m$ be a tree constructed as follows: begin with a vertex $v$ and add $m$ paths, each of length $m$, all starting at $v$.  Then $T_m$ has the form of a spider with legs of length $m$.

We know that at least $\left \lceil \frac m2 \right \rceil$ searchers are required to protect $T_m$ since the vertices protected by any one searcher induce a path in $T_m$.  Therefore, one searcher can protect at most two of the branches radiating from $v$.  Furthermore, if you select $\left \lceil \frac m2 \right \rceil$ leaves and place a searcher on each, those searchers can protect $T_m$ in $2m$ moves.  Since $k \ge 2m$, it follows that $d_k(T_m) = \left \lceil \frac m2 \right \rceil$.  

Now consider the induced subgraph $T_m$ obtained by deleting $0, 1, \ldots , m-1$ vertices, respectively, from the ends of each branch.  The resulting graph $T'$ has branches of length $1, 2, \ldots, m$ radiating from $v$. We claim that this graph requires at least $m-1$ searchers.

Suppose $n$ searchers are sufficient, where $n\le m-2$.  Then there are at least two searchers that each clear at least two branches.   It follows that these two searchers $A$ and $B$, each start on a leaf.  Each searcher moves closer to $v$ in each stage.  Since all the branches are different lengths, one searchers arrives at $v$ before the other.  Without loss of generality, suppose $A$ arrives first.  It follows, from the rules of the game, that $B$ cannot subsequently move onto $v$.  Therefore, it is impossible for $B$ to clear a second branch.   Hence, at most one searcher can clear more that one branch and $d_k(T_m') \ge m-1$ and $k \ge m$.  It is straightforward to show that $d_k(T_m') = m-1$ by placing a single searcher on all leaves of $T_m'$ except for the leaf adjacent to $v$.  

It follows that, for any  $m \ge 4$ and $k\ge 2m$, $d_k(T_m) < d_k(T_m')$.  

We can also consider the induced subgraph of $T_m$ obtained by removing two branches of $T_m$. Call this $T_m''$.  It follows that $d_k(T_m'') =  d_k(T_m)-1$ for any $k \ge 2m$.  
\end{proof}

We conclude this section by determining the $k$-move deduction number for several basic classes of graphs, namely paths, cycles, complete graphs and complete bipartite graphs. The case $k=1$ for these graphs was considered in~\cite{BDF2024}.  Recall that we denote by $P_n$ the path on $n$ vertices.

\begin{lemma}\label{lem:path_cycle_complete}
If $k \ge 2$, then
\begin{enumerate}
    \item  $d_k(P_n) = \left\lceil \frac{n}{k+1}\right\rceil$ for any $n \ge 2$;
    \item  $d_k(C_n) =\max\{2,  \left\lceil \frac{n}{k+1}\right\rceil \}$ for any $n \ge 3$;
    \item $d_k(K_n) = n-1$ for any $n \ge 2$.
\end{enumerate}
\end{lemma}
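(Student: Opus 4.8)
The plan is to obtain all three equalities by matching the general lower bounds already proved in Lemma~\ref{lem:gen_bound_1} and Lemma~\ref{lemma:clique} against explicit successful layouts. Part~3 is immediate and in fact already recorded in the discussion preceding the statement: Lemma~\ref{lem:gen_bound_1} gives $d_k(K_n)\ge\delta(K_n)=n-1$, and the connectivity bound of the same lemma gives $d_k(K_n)\le n-1$. Likewise the lower bounds in Parts~1 and~2 require no new work: $d_k(P_n)\ge\lceil n/(k+1)\rceil$ is the first bound of Lemma~\ref{lem:gen_bound_1}, and $d_k(C_n)\ge\max\{2,\lceil n/(k+1)\rceil\}$ follows from the same lemma using both $d_k\ge\lceil n/(k+1)\rceil$ and $d_k\ge\delta(C_n)=2$. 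So the content is entirely in the two upper-bound constructions.

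For the path I would partition the vertices $v_1,\dots,v_n$ into $q=\lceil n/(k+1)\rceil$ consecutive blocks, each of at most $k+1$ vertices, and place one searcher at the right-hand endpoint of each block. The searcher in the rightmost block sits at the leaf $v_n$ and so may immediately sweep leftward across its block; every other block-searcher begins with two unprotected neighbours and is therefore stuck. The key point is that once a searcher finishes sweeping its block, the leftmost vertex of that block becomes protected, which leaves the searcher immediately to its left with a single unprotected neighbour and frees it to sweep. I would formalise this as a right-to-left cascade and check that each searcher traverses a block of at most $k+1$ vertices, hence uses at most $k$ moves, while a stuck searcher uses no moves and every vertex is eventually protected.

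For the cycle the naive ``one searcher per arc'' layout fails, and pinning down why is the crux. On $C_n$ every searcher starts with two unprotected neighbours, so in a one-per-arc layout each searcher waits for the arc on one side of it to clear, producing a circular dependency with no leaf to break it. My fix is to seed the process by placing two of the searchers on \emph{adjacent} vertices and letting them sweep in opposite directions; together these two protect a contiguous block of $2(k+1)$ vertices with no overlap. This effectively cuts the cycle open: the remaining arc of $n-2(k+1)$ vertices has both endpoints adjacent to a now-protected vertex, so it behaves exactly like the path, and the remaining $q-2$ searchers clear it by the same cascade seeded from the sweeping fronts. I would verify the bookkeeping $q-2=\lceil (n-2(k+1))/(k+1)\rceil$, which holds with equality, so exactly $\max\{2,q\}$ searchers suffice; the small cases $q\le 2$ (where the two seed searchers already cover the whole cycle) fall out of the same construction.

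The main obstacle is this cycle upper bound: one must argue that the seed-plus-cascade layout really protects every vertex and, crucially, that the cascade timing is valid — each interior searcher must remain stuck (consuming no move) until one neighbour is protected, after which it sweeps its arc in at most $k$ moves. The path cascade is the clean template for this argument, and once it is established the cycle reduces to it together with the counting identity above; Parts~1 and~3 then follow with little further effort.
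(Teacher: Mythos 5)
Your proposal is correct and follows essentially the same route as the paper: all lower bounds come from Lemma~\ref{lem:gen_bound_1} (and the $n-1$ upper bound for $K_n$ from its connectivity bound), the path is cleared by one searcher per block of $k+1$ vertices with a cascading sweep (yours runs right-to-left, the paper's left-to-right --- a mirror image), and the cycle uses exactly the paper's layout of two searchers on adjacent vertices sweeping in opposite directions to break the circular dependency, with the remaining searchers cascading as on a path.
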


\begin{proof}
We prove each case separately. 

\begin{enumerate}
    \item 
Let $P_n = v_0v_1\cdots v_{n-1}$.  Place a single searcher on each vertex in the set $\{v_{j(k+1)} : j = 0,  \ldots, \left \lceil \frac{n}{k+1} \right \rceil -1\}$. This is a successful initial layout and is equivalent to clearing $ \left\lfloor \frac{n}{k+1}\right\rfloor$ paths of length $k$ and potentially one path of length at most $k-1$.

\item By Lemma \ref{lem:gen_bound_1}, it follows that $d_k(C_n) \ge 2$ for any $k \ge 2$ and $n \ge 3$.

Let $C_n = v_0v_1\cdots v_{n-1}v_0$.  If $n \le 2(k+1)$, place a searcher on each of $v_0$ and $v_{n-1}$.  The searcher that starts on $v_0$ moves along the path $P = v_0v_1 \ldots v_k$ and the searcher that starts on $v_{n-1}$ moves along the path   $Q = v_{n-1}v_{n-2} \ldots v_{n-k}$. Note that if $n < 2(k+1)$ the searchers only move until all vertices are cleared and may not traverse the entire paths $P$ and $Q$.    

If $n > 2(k+1)$, place a single searcher on each vertex in the set $\{v_{j(k+1)} : j = 0,  \ldots, \left \lceil \frac{n}{k+1} \right \rceil -2\} \cup v_{n-1}$. The searcher on $v_0$ will traverse the path $P = v_0v_1\cdots v_{k}$. Thereafter the searcher on $v_{k+1}$ will be able to move. This repeats with all searchers until the graph is cleared.

\item  By Lemma~\ref{lem:gen_bound_1}, since $\delta(G) = n-1$ we have that $d_k(K_n)\geq n-1$. Place $n-1$ searchers on different vertices. Then on the first turn, each searcher can move to the only uncovered vertex. Thus, $d_k(K_n) =n-1$. 
\end{enumerate}
\end{proof}

\begin{theorem}
If $k\geq 2$, then $d_k(K_{m,n}) = m+  \left\lceil \frac{n}{2}\right\rceil -1$  for any $n \ge m \ge 1$.
    
\end{theorem}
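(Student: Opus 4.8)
The plan is to prove the two bounds separately: the upper bound by an explicit layout, and the lower bound by exploiting a strong structural rigidity of $K_{m,n}$. Throughout, write $A$ and $B$ for the two parts, with $|A| = m \le n = |B|$, and let $A_0, B_0$ denote the initially occupied vertices in each part.

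For the upper bound $d_k(K_{m,n}) \le m + \lceil n/2\rceil - 1$, I would place one searcher on each of $m-1$ vertices of $A$, say $a_1, \dots, a_{m-1}$, and one searcher on each of $\lceil n/2\rceil$ vertices of $B$. In the first stage every occupied $B$-vertex sees exactly one unprotected neighbour, namely the remaining $A$-vertex $a_m$, so all $\lceil n/2\rceil$ of these searchers move onto $a_m$. In the second stage $a_m$ carries $\lceil n/2\rceil \ge \lfloor n/2\rfloor$ searchers and its unprotected neighbours are precisely the $\lfloor n/2\rfloor$ still-uncovered vertices of $B$, so the searchers fan out and protect them. Since each searcher moves at most twice this works for every $k \ge 2$, and the few small values of $n$ are checked directly (there the forced moves still clear the graph).

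The key to the lower bound is the observation that in $K_{m,n}$ every legal move is a ``fan-out'' that protects all currently-unprotected vertices of the opposite part simultaneously: each vertex is adjacent to the entire opposite part, and the movement rule only releases searchers from a vertex when there are enough of them to cover all of its unprotected neighbours. Hence the first stage in which anyone moves already completes one part, and a successful layout clears in at most two stages; moreover, completing a part requires some single vertex of the opposite part to hold at least as many searchers as there remain unprotected vertices in the completed part. I would then case on which part completes at which stage. The decisive case is $A$ completing in stage one and $B$ in stage two: the searchers that complete $A$ must afterwards accumulate on one vertex of $A$ to height at least $n - |B_0|$ in order to complete $B$, yet those same searchers occupy $|B_0|$ distinct seeds of $B$, so their number is at least $\max(|B_0|,\, n-|B_0|) \ge \lceil n/2\rceil$; together with the $m-1$ searchers parked on $A$ this gives $s \ge m-1+\lceil n/2\rceil$. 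The remaining completion patterns (both parts in stage one, the reverse order, or a part seeded outright) I would show each force $s \ge m + n - 2 \ge m - 1 + \lceil n/2\rceil$ by an analogous count.

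The main obstacle will be the lower-bound bookkeeping, specifically tracking how searchers accumulate. The efficient layouts work precisely because several vertices of one part may, in a single stage, dump all of their searchers onto one vertex of the other part, and the count must permit this while still extracting the $\lceil n/2\rceil$ term. Concretely, when $A$ is not fully seeded the vertices of $B$ that complete $A$ each need at least $m - |A_0|$ searchers, and the height of the resulting pile on a single $A$-vertex equals their total minus the searchers forced onto the other newly-protected $A$-vertices; pinning down this inequality and confirming that the minimum over all admissible choices of $|A_0|$, $|B_0|$ and the number of fanning vertices is exactly $m-1+\lceil n/2\rceil$ is the delicate part. A possibly cleaner route to the same conclusions is to invoke the boundary lemma (Lemma~\ref{lem:boundary}) at the stage preceding completion.
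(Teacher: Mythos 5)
Your upper bound is exactly the paper's construction (seed $m-1$ vertices of one part and $\lceil n/2\rceil$ of the other, fan in to the lone unseeded vertex, then fan out), and your two-stage rigidity observation is also the paper's starting point. The genuine gap is in how you dispose of the non-decisive cases. You claim that the \emph{reverse order} --- the larger part $B$ completed in stage one by searchers fanning out from $A$, then $A$ completed in stage two --- forces $s \ge m+n-2$. That is false. Consider $K_{4,5}$ with parts $A=\{x_1,\dots,x_4\}$, $B=\{y_1,\dots,y_5\}$, and place one searcher on each of $x_1,x_2,y_1,y_2,y_3,y_4$: six searchers, which equals $m+\lceil n/2\rceil -1$ and is strictly less than $m+n-2=7$. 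In stage one, each of $x_1,x_2$ sees the single unprotected vertex $y_5$ and moves there (the seeded $y_i$ cannot move, having two unprotected neighbours), so $B$ completes first; in stage two the two searchers stacked on $y_5$ fan out to $x_3,x_4$. So the reverse order not only evades your $m+n-2$ bound, it \emph{attains the optimum}, and your plan for that case amounts to proving a false statement. The paper's Case 2 handles this pattern with a genuinely different count: if $s'$ vertices of $A$ fan out onto $B$, then enough searchers must remain available to re-cover $A\setminus A_0$ in stage two, which in the main sub-case forces $|A_0|\ge \lceil m/2\rceil$ and yields the bound $n+\lceil m/2\rceil -1$; only then does the inequality $n+\lceil m/2\rceil -1 \ge m+\lceil n/2\rceil -1$, which crucially uses $n\ge m$, finish that case.

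A secondary weakness sits inside your decisive case itself. You count ``the $m-1$ searchers parked on $A$,'' but nothing forces $|A_0|=m-1$; when $|A_0|<m-1$ the shortfall must instead be recovered from the fan-out multiplicity $|B_0'|\,(m-|A_0|)$ of the moving $B$-vertices. Also, since searchers may only move onto \emph{unprotected} vertices, the stage-two pile lies either entirely on a vertex of $A_0$ (consisting only of its initial searchers) or entirely on a vertex of $A\setminus A_0$ (consisting only of arrivals from $B$); your accumulation argument conflates these, and they need separate treatment (they correspond to the paper's sub-cases with $n-t\ge 2$ versus $n-t=1$, and to whether the fanning vertices can supply the pile). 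These parts are repairable along the lines of the paper's Cases 3a/3b, but as written your inequality $s \ge (m-1)+\max\{|B_0|,\,n-|B_0|\}$ is asserted rather than derived, and the reverse-order case needs an entirely different argument from the one you propose.
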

\begin{proof}
We note that  $d_k(K_{1,1}) = 1$, and it therefore follows that $d_k(K_{m,n}) = m-1+  \left\lceil \frac{n}{2}\right\rceil $  when $m=n=1$.  We will therefore assume that $n \ge 2$.  

    Let $K_{m,n}$ have bi-partition $(X, Y)$, with $|X|=m$, $|Y|=n$, and $1 \le m \le n$. 
    
      Let $Y'$ be a subset of $Y$ such that $|Y'| = \left \lceil \frac n2 \right \rceil$.  Now select a single vertex $x \in X$.  Place a single searcher on each vertex of $Y' \cup (X - x)$.  This is the initial layout, using $m - 1 + \left \lceil \frac n2 \right \rceil$ searchers.  
    
    In round 1, the searchers on $Y'$ all move to $x$. At the beginning of round 2, there are $\left \lceil \frac n2 \right \rceil$ searchers on $x$ and $\left \lfloor \frac n2 \right \rfloor$ uncleared vertices in $Y$.  Therefore, the searchers move from $x$ to the uncleared vertices, $Y-Y'$, so that there is at least one searcher on each vertex of $Y-Y'$.  At the end of round 2, all vertices are cleared. Hence, $d_k(K_{m,n}) \le m-1+  \left\lceil \frac{n}{2}\right\rceil$

Next, we show any optimal successful layout requires at least  $m-1+  \left\lceil \frac{n}{2}\right\rceil$ searchers.

Consider an optimal initial layout of searchers on a set of vertices.  Call the set $L_0$.  Let $S = L_0 \cap X \neq \emptyset$,  $T = L_0 \cap Y$,  $|S|=s$, and $|T| = t$.  Furthermore, let $S' \subseteq S$ and $T' \subseteq T$ such that exactly those searchers on $S' \cup T'$ move in round 1.  Let $s'=|S'|$ and $t'=|T|$.

\noindent Case 1:  Suppose $S'$ and $T'$ are both non-empty.  This means each vertex in $S'$ must have $|Y-T| = n-t$ searchers placed on it in round 0.  Similarly, each vertex in $T'$ has $m-s$ searchers placed on it in round 0.  Since each vertex in $S \cup T$ has at least one searcher, it follows that at least $(n-t)+ (m-s) + s-1+t-1 = n+m-2$ searchers are used.  Since we have an optimal initial layout, it follows that $d_k(K_{m,n}) \ge n+m-2$. Since $n \ge 2$, it follows $\lfloor \frac n2 \rfloor \ge 1$.  Hence, $n+m-2  = \lceil \frac n2 \rceil + \lfloor \frac n2 \rfloor + m -2 \ge  m + \lceil \frac n2 \rceil - 1$ and it follows that $d_k(K_{m,n}) \ge  m + \lceil \frac{n}{2} \rceil -1$.

\noindent Case 2: Suppose $S'$ is non-empty and $T'$ is empty.  It follows that $n>t$, otherwise no searcher in $S$ can move onto $Y$. 
  
In order to move in round 1, each vertex in $S'$ must have at least $n-t$ searchers on it.  Therefore, at the end of round 1, at least $s'$ searchers have moved onto each vertex of $Y-T$.  Furthermore, at the end of round 1, there is at least one searcher on each vertex of $T$, no searchers on $S'$,  and at least one searcher on each vertex of $S-S'$ .  From just this analysis, we know that at least $s'(n-t) + t + s-s'$ searchers are required. 

Case 2a:  $m-s \le s'$.  Recall that at the beginning of round 2, there were at least $s'$ searchers on each vertex of $Y-T$. Since $m-s \le s'$, it follows that all the searchers in $Y-T$ can move onto $X-S$ and clear $X-S$.  Therefore, the graph is cleared in two rounds using at least $s'(n-t)+ s-s'+ t$ searchers and $d_k(G) \ge s'(n-t-1) + s + t$.

Since $s' \ge 1$ and $n-t \ge 1$, $s'(n-t-1 )+ s+t  \ge n+s -1$.  Furthermore, $m -s \le s'$ implies $m \le s+s' \le 2s$ and $s \ge \left \lceil \frac m2 \right \rceil$.  Therefore,  $d_k(K_{m,n}) \ge n + \left \lceil \frac m2 \right \rceil -1  = \lceil \frac m2 \rceil +\lfloor \frac n2 \rfloor + \lceil \frac n2 \rceil  -1 \ge m + \left \lceil \frac n2 \right \rceil -1$.

Case 2b: $m-s > s'$.  It follows that, at the end of round 1, at least one vertex in $Y-T$ has $m-s$ searchers on it.  Therefore, we need $m-s-s'$ searchers in addition to the $s'(n-t-1)+ s+ t$ searchers previously justified.  Therefore,  $d_k(K_{m,n}) \ge m+ s'(n-t-2) + t$.

If  $n -t \ge 2$,  then $m+ t+ s'(n-t-2) \ge m+n -2$, since $s' \ge 1$.  As in Case 1,   $n+m-2 \ge  m + \lceil \frac n2 \rceil - 1$ since  $n \ge 2$.   It follows that $d_k(K_{m,n}) \ge  m + \lceil \frac{n}{2} \rceil -1$.   

If $n -t = 1$, then every searcher on $S$ can move onto the one uncleared vertex in $Y$ in round 1.  Therefore, $S = S'$ and  $m+ t+ s'(n-t-2) = m+t -s = m+n-1-s$.  Therefore,  $d_k(K_{m,n}) \ge m-s+n-1$.

Since $s=s'$ and $m-s>s'$,  we have $s \le \left \lfloor \frac m2 \right \rfloor \le \left \lfloor \frac n2 \right \rfloor $.  Hence, $m-s + n-1 \ge m +\left \lceil \frac n2 \right \rceil  -1$ and  $d_k(K_{m,n}) \ge m+ \lceil  \frac n2 \rceil  -1$.  

It follows that whenever $T'$ is empty,  $d_k(K_{m,n}) \ge m + \lceil \frac{n}{2} \rceil -1$.  

Case 3:  Suppose $S'$ is empty and $T'$ is non-empty.  

Case 3a: $n-t \le t'$.  Using arguments similar to those in Case 2a, we can show that when $n-t \le t'$, that $d_k (K_{m,n}) \ge t'(m-s-1) + t+s \ge m+t -1 \ge m + \lceil \frac n2 \rceil -1$.  

Case 3b: $n-t>t'$. Using arguments similar to those in Case 2b, we can show that $d_k(K_{m,n} )\ge n+m-2 \ge m + \lceil \frac n2 \rceil -1$ when $n-t \ge 2$.  Finally, when $n-t = 1$, we can show that $d_k(K_{m,n}) \ge n-t+m-1$,  $t=t'$ and $t \le \lfloor \frac n2 \rfloor $.  It follows that $d_k(K_{m,n}) \ge m+ \lceil \frac n2 \rceil -1 $.  

\medskip 

In all cases we have $d_k(K_{m,n}) \ge m+ \lceil \frac n2 \rceil -1 $ and it therefore follows that $d_k(K_{m,n}) = m+ \lceil \frac n2 \rceil -1 $.
\end{proof}

\section{Cartesian Products}

For any two graphs $G$ and $H$, the {\em Cartesian product} of those graphs, denoted $G \Box H$, is defined as follows: $V(G \Box H) = \{(u,v) : u \in V(G) \mbox{ and } v \in V(H)\}$ and $E(G \Box H) = \{(x,y)(u,v) : x = u \mbox{ and } yv \in E(H), \mbox{ or } xu \in E(G) \mbox{ and } y=v\}$.   The value of $d(G \Box H)$ was considered in~\cite{BDF2024}, where it was shown that $d(G \Box H) \leq \min\{|V(G)| \cdot d(H), |V(H)| \cdot d(G)\}$.  We extend this upper bound to $k$-move deduction and also establish lower bounds on $d_k(G \Box H)$ for general graphs $G$ and $H$ in Section~\ref{GenCartesian}.  
We then in Section~\ref{CartesianPath} look at the specific case where $G$ and $H$ are paths.  

\subsection{Upper and Lower Bounds on $d_k(G \Box H)$} \label{GenCartesian}

We will say that a subgraph is {\it totally protected} if all vertices of the subgraph are protected and  {\it unprotected} if no vertex is protected.  Otherwise, we will say that the subgraph is {\it partially protected}. We say that a subgraph is {\it protected} if it is partially or totally protected.

\begin{theorem}\label{lower_bound_Cartesian}
    Suppose $2 \le m \le n$, and let $G$ and $H$ be connected graphs of order $m$ and $n$, respectively.  For any $k \ge 1$, $$d_k(G \Box H) \ge \min \{ n+  \delta (G) -1, m+  \delta (H) -1\}.$$
\end{theorem}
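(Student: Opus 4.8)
The plan is to fix an optimal successful layout with $d := d_k(G \Box H)$ searchers and to exploit the product structure through its two families of fibres: the $n$ copies of $G$, namely $\Gamma_v = \{(u,v) : u \in V(G)\}$ for $v \in V(H)$, and the $m$ copies of $H$, namely $\Lambda_u = \{(u,v): v\in V(H)\}$ for $u \in V(G)$. At the final stage every vertex, hence every fibre of both families, is totally protected, so there is a well-defined earliest stage $t_0$ at which some fibre (a $\Gamma$ or a $\Lambda$) first becomes totally protected. The two quantities in the bound are interchanged by the symmetry $G \leftrightarrow H$ (which swaps $m \leftrightarrow n$, $\delta(G)\leftrightarrow \delta(H)$, and the two fibre families), so it suffices to treat one case: assuming that at stage $t_0$ a copy of $G$, say $\Gamma_{v_0}$, becomes totally protected while no fibre of either family was totally protected before $t_0$, I will show $d \ge m + \delta(H) - 1$. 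Since $m + \delta(H) - 1 \ge \min\{n+\delta(G)-1,\, m+\delta(H)-1\}$, this yields the theorem, the symmetric case (a copy of $H$ completing first) giving the other term. The degenerate situation in which the capture time is $1$, so that all fibres complete simultaneously at the final stage, will be disposed of directly, and if the layout already occupies every vertex the bound is immediate from $mn \ge m+\delta(H)-1$.

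The heart of the argument is to convert ``$\Gamma_{v_0}$ is the first totally protected fibre'' into a count of at least $m + \delta(H)-1$ \emph{distinct} searchers present at a single stage. The $m$-term comes from the copy $\Gamma_{v_0}$ itself: comparing stages $t_0-1$ and $t_0$, each vertex of $\Gamma_{v_0}$ that is unprotected at $t_0-1$ receives a searcher at $t_0$ from an occupied neighbour, while each vertex of $\Gamma_{v_0}$ already protected at $t_0-1$ and incident (within $G$) to a still-unprotected vertex of $\Gamma_{v_0}$ is a boundary vertex and so, by Lemma~\ref{lem:boundary}, is occupied. Using the connectivity of $G$ together with the movement rule (searchers leave a vertex only when every unprotected neighbour is simultaneously covered), I will track the ``front'' sweeping across $\Gamma_{v_0}$ and argue that completing this copy of $G$ pins down $m$ distinct searchers, counting where necessary searchers that have just advanced out of $\Gamma_{v_0}$ into an adjacent column rather than only those currently on $\Gamma_{v_0}$.

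The additional $\delta(H)-1$ searchers are extracted from the columns adjacent to $\Gamma_{v_0}$. The vertex $v_0$ has at least $\delta(H)$ neighbours in $H$, hence there are at least $\delta(H)$ adjacent copies $\Gamma_{v'}$; by minimality of $t_0$ none of these is totally protected at $t_0$, so each contains an unprotected vertex, forcing searchers either on the $\Gamma_{v_0}$-side boundary directed toward those columns or inside them. The aim is to show that at least $\delta(H)-1$ of these ``lateral'' searchers are distinct from the $m$ already counted, the single allowed exception (the ``$-1$'') corresponding to the neighbouring column from which the sweep of $\Gamma_{v_0}$ was fed. Combining the two counts gives $d \ge m + \delta(H) - 1$.

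The main obstacle is precisely this searcher-counting, for two reasons. First, ``protected'' does not imply ``currently occupied,'' so Lemma~\ref{lem:boundary} and the movement rule must be applied with care to guarantee that the $m$ front searchers and the $\delta(H)-1$ lateral searchers are genuinely distinct occupants, avoiding any double counting of a searcher that simultaneously lies on $\Gamma_{v_0}$ and serves an adjacent column. Second, pinning down the exact additive constant, obtaining $\delta(H)-1$ rather than merely $\delta(H)$ or $\delta(H)-2$, requires a careful choice of the stage at which the count is taken and a precise treatment of the direction from which $\Gamma_{v_0}$ is completed; this, together with the capture-time-one degeneracy, is where the bulk of the technical work will lie.
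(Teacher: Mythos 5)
Your proposal is an outline, not a proof: the two counts that constitute the entire content of the theorem --- that completing $\Gamma_{v_0}$ pins down $m$ distinct searchers, and that $\delta(H)-1$ further ``lateral'' searchers are distinct from those $m$ --- are both announced in the future tense and never carried out; you say yourself that this is where ``the bulk of the technical work will lie.'' Moreover, the sketched route to the $m$-count has a concrete hole. Lemma~\ref{lem:boundary} applies only to boundary vertices, so your two categories (vertices of $\Gamma_{v_0}$ unprotected at stage $t_0-1$, and protected vertices with an unprotected $G$-neighbour inside $\Gamma_{v_0}$) omit the vertices of $\Gamma_{v_0}$ whose closed neighbourhoods are already protected: these need not be occupied at stage $t_0-1$ or $t_0$, and the searchers that protected them earlier may coincide with searchers you count elsewhere. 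Indeed, a searcher sitting on a boundary vertex of $\Gamma_{v_0}$ at stage $t_0-1$ may be exactly the searcher that arrives at an unprotected vertex of $\Gamma_{v_0}$ at stage $t_0$ --- the double count you acknowledge but never resolve. The case $t_0=0$ (a fibre entirely covered by the initial layout, which is what happens in the natural optimal ``sweep'' layouts) is also untreated: there is no stage $t_0-1$, and the lateral searchers cannot then be extracted from Lemma~\ref{lem:boundary} at all; they must come from an argument about what is needed for any searcher to move at stage $1$, which is precisely what the paper's Case 1 supplies.

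There is also a structural flaw in the case split itself. Your hypothesis ``a copy of $G$ completes at $t_0$ and nothing completed earlier'' is not exclusive of its mirror image: a copy of $G$ and a copy of $H$ may complete at the same stage $t_0$. In that overlap your two claimed implications would together give $d_k(G \Box H) \ge \max\{m+\delta(H)-1,\, n+\delta(G)-1\}$, which is false in general: for $k \ge 2$ one has $d_k(P_3 \Box K_5)=5$ (Theorem~\ref{upper_bound_Cartesian} gives $d_k \le 5\, d_k(P_3)=5$, matching the present lower bound), whereas $m+\delta(H)-1=6$. So your implication can only be correct if such ties are impossible in layouts this small --- a nontrivial statement that any completion of your argument must prove, and which the sketch does not address. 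For contrast, the paper pivots on a different event: the last stage $t$ at which \emph{both} families still contain a totally unprotected fibre. After that stage, every partially protected fibre of the relevant family has an occupied boundary vertex, and each still-unprotected fibre forces additional searchers to cross into it ($\delta(H)$ for the first, one for each subsequent one); this choice of pivot avoids the interior-vertex, tie, and $t_0=0$ problems simultaneously, with the remaining situation (one family already fully touched by the initial layout) handled by counting searchers in $N[v]$ for a vertex $v$ vacated at stage $1$. A smaller slip: capture time $1$ does not mean the layout occupies every vertex; the paper disposes of that case via the $1$-move bound $\left\lceil mn/2 \right\rceil$ from Theorem~\ref{1-move bounds}.
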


\begin{proof}
   Consider a successful initial layout using $d_k(G \Box H)$ searchers and the resulting moves at each stage.  
   
   If the capture time of $G \Box H$ is $1$, then $d_k(G \Box H) = d(G \Box H)$ and by Theorem~\ref{1-move bounds}, $d_k(G \Box H) \geq \frac{mn}{2}$.  If $m \geq 4$, then $\frac{mn}{2} \geq 2n \geq m+n > m+ \delta(H)-1$.  If $m=3$, then $\delta(G) \leq 2$, and $\frac{mn}{2} = \frac{3n}{2} > n+1 \geq n+ \delta(G)-1$.
   If $m=2$, then $\delta(G)=1$ and $\frac{mn}{2}=n=n+\delta(G)-1$.
   We thus assume that the capture time is at least $2$.

Let $V(G) = \{x_1, x_2, \ldots, x_m\}$ and $V(H) = \{y_1, \ldots , y_n\}$.  For each $j = 1, \ldots n$, let $G_j$ be the subgraph of $G \Box H$ induced on $\{(x, y_j): x\in V(G)\}$.  Similarly, for each $i = 1, \ldots, m$, let $H_i$ be the subgraph induced on $\{(x_i, y): y \in V(H)\}$.  Let ${\cal G} = \{G_j: j=1, \ldots, n\}$ and ${\cal H} = \{H_i: i=1, \ldots, m\}$.

{\it Case 1: In the initial layout, there is at least one searcher on each copy of $G$ in ${\cal G}$, or at least one searcher on each copy of $H$ in ${\cal H}$.}   

Suppose there is at least one searcher in each copy of $G$ in the initial layout.  Also assume, without loss of generality, that the vertex $v=(x_1,y_1)$  is occupied by a searcher in the initial layout, but not occupied by a searcher at stage 1.  It follows that, in the initial layout, there is one searcher on $v$ for each unprotected vertex in $N(v)$. Therefore, the number of searchers appearing in $N[v]$ in the initial layout is at least $\mbox{deg }(v) = \mbox{deg}_G (x_1) + \mbox{deg}_H (y_1)$.  
Since $N[v] \cap V(G_j) = \emptyset$ whenever $y_j \not \in N_H[y_1]$,  there are $n - \mbox{deg}_H (y_1) -1$ copies of $G \in {\cal G}$ that do not intersect $N[v]$.  It follows that there are at least $\mbox{deg }(v) + n - \mbox{deg}_H (y_1) -1$ searchers.  Hence, $d_k (G \Box H) \ge n + \mbox{deg}_G (x_1) - 1 \ge n+ \delta (G) -1 $.

If, in the initial layout, there is at least one searcher on each copy of $H$ in ${\cal H}$, it can be similarly shown that $d_k(G \Box H) \ge m+ \delta (H) -1$.  Hence, in Case 1, we have $d_k(G \Box H) \ge \min \{ n+  \delta (G) -1, m+  \delta (H) -1\}$.

{\it Case 2: In the initial layout, there is some copy of $G$ in ${\cal G}$ and some copy of $H$ in ${\cal H}$ such that neither is occupied by a searcher.}

It follows that there are copies of $G$ and $H$ that are initally unprotected.  Suppose stage $t$ is the last stage in which there are subgraphs $G_u \in {\cal G}$ and  $H_u \in {\cal H}$ that are unprotected.   Without loss of generality, assume that at stage $t+1$ every $H \in {\cal H}$ is at least partially protected.  

Let ${\cal G}_p$ be the subset of ${\cal G}$ consisting of copies of $G$ that have at least one protected vertex at stage $t$.  Define ${\cal H}_p$ similarly. It follows that ${\cal G}_p$, ${\cal H}_p$, ${\cal G} - {\cal G}_p$, and ${\cal H} - {\cal H}_p$ are all non-empty.

\begin{claim}
    Every $G \in {\cal G}_p$ and every $H \in {\cal H}_p$ is partially protected.
\end{claim}

\begin{claimproof}
    Without loss of generality, suppose there is some $H_p \in {\cal H}_p$ such that $H_p$ is totally protected at stage $t$. It follows that every $G \in {\cal G}$ is partially protected, since each $G \in {\cal G}$ contains exactly one vertex from each of $H_p$ and $H_u$. 
    Thus ${\cal G}_p = {\cal G}$ and ${\cal G}_u = \emptyset$, which is a contradiction.   
\end{claimproof}

It follows that every $G \in {\cal G}_p$ and every $H \in {\cal H}_p$ has a boundary vertex at stage $t$, and is therefore occupied by at least one searcher.  

Without loss of generality, suppose a searcher moves from a vertex $v$ in $H_1 \in {\cal H}_p$ to a vertex $w$ in $H_2 \in {\cal H} - {\cal H}_p$ at stage $t+1$.   It follows that, at stage $t$, $v$ is occupied by at least one searcher for each of its unprotected neighbours in $V(H_1)$, as well as a searcher that moves to $w$.  Furthermore, at stage $t$, each protected vertex in $H_1$ is a boundary vertex since it has a neighbour in $H_2$ which is unprotected.  Hence, every protected vertex in $H_1$ is occupied by a searcher at stage $t$.  Therefore, $N[v] \cap V(H_1)$ is occupied by at least $\mbox{deg}_H y_1 + 1 \ge \delta (H) +1$ searchers.   

Suppose $|{\cal H} - {\cal H}_p| \ge 2$.  Without loss of generality, suppose $H_3 \in {\cal H} - {\cal H}_p$.  If $x_1$ is adjacent to $x_3$ in $G$, it follows that there must be an additional searcher on $v$ to move from $v$ to its unprotected neighbour in $H_3$.    If $x_1$ is not adjacent to $x_3$ in $G$, then searchers move from some $H' \in {\cal H}_p$ to $H_3$ at stage $t+1$, which means there must be at least  $\delta (H) +1$ searchers on $H'$ at stage $t$.

We therefore have the following situation at stage $t$:
\begin{enumerate}
    \item  each copy of $H$ in ${\cal H}_p$ is occupied by at least one searcher,
    \item for the first copy of $H$ in ${\cal H} - {\cal H}_p$, there must be least $\delta (H)$ additional searchers,
    \item for each subsequent  copy of $H$ in ${\cal H} - {\cal H}_p$, there must be at least one additional searcher.
\end{enumerate}  

It follows that there are at least $|{\cal H}_p| + \delta(H) + (m-|{\cal H}_p|-1) = m+\delta (H) -1$ searchers on $G \Box H$.  Hence, $d_k(G \Box H) \ge  m+  \delta (H) -1\ge \min\{ n+  \delta (G) -1, m+  \delta (H) -1\}$.
\end{proof}

\begin{theorem}\label{upper_bound_Cartesian}
Suppose $G$ and $H$ are connected graphs of order $m$ and $n$, respectively.  For any $k \ge 1$, $$d_k(G \Box H) \le \min\{md_k(H), n d_k(G)\}.$$
\end{theorem}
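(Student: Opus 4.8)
The plan is to prove the one-sided bound $d_k(G \Box H) \le n\, d_k(G)$; the companion bound $d_k(G \Box H) \le m\, d_k(H)$ then follows by interchanging the roles of $G$ and $H$, and taking the minimum gives the stated inequality. For the first bound I would take an optimal successful layout on $G$ together with a fixed sequence of legal moves that protects all of $G$ using $d_k(G)$ searchers, and then replicate this layout and strategy \emph{independently and synchronously} across the $n$ copies of $G$ inside the product. Writing $G_j$ for the subgraph induced on $\{(x,y_j) : x \in V(G)\}$, this places $d_k(G)$ searchers in each $G_j$, for a total of $n\, d_k(G)$; it then remains to verify that the synchronous play is legal in $G \Box H$ and clears it within the move budget.

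The crucial step is a synchrony invariant, which I would prove by induction on the stage: at every stage the protected set of each copy $G_j$ equals $\{(x,y_j) : x \text{ is protected in the } G\text{-game}\}$, and in particular is the same for all $j$. The base case holds because the identical layout is placed in each copy. For the inductive step, the key observation is that whenever a searcher occupies a vertex $v=(x,y_j)$, that vertex is protected, so by the invariant $x$ is protected in the $G$-game and hence every $H$-neighbour $(x,y_{j'})$ of $v$ is also protected. Consequently the only unprotected neighbours of $v$ in $G \Box H$ are the vertices $(x',y_j)$ with $x' \sim_G x$ and $x'$ unprotected, which are precisely the unprotected neighbours of $x$ in the $G$-game. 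Since the number of mobile searchers on $v$ matches that on $x$ by synchrony, the movement condition is met in the product exactly when it is met in $G$, and the legal moves coincide; choosing the searchers in $G_j$ to mimic the $G$-strategy is therefore legal and preserves the invariant at the next stage.

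With the invariant established, the conclusion is immediate. Each copy $G_j$ is cleared exactly as $G$ is, so every vertex of $G \Box H$ eventually becomes protected, and the number of moves made by each searcher in $G_j$ equals the number made by the corresponding searcher in the $G$-game, which is at most $k$. Hence no searcher exceeds its budget, the replicated layout is successful, and $d_k(G \Box H) \le n\, d_k(G)$.

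I expect the main obstacle to be the careful verification of the synchrony invariant, and specifically ruling out that the inter-copy ($H$-type) edges introduce extra unprotected neighbours which could either block a forced move or alter which searchers are obliged to move. The entire argument rests on the fact that an occupied vertex is necessarily protected, which under the invariant forces all of its $H$-neighbours to be protected as well; making this precise — including confirming that any excess searchers may be routed within their own copy exactly as prescribed by the $G$-strategy — is where the real work lies.
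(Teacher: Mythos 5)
Your proposal is correct and follows essentially the same approach as the paper: replicate an optimal successful layout of $G$ on each of the $n$ copies of $G$ in the product and have the searchers play synchronously within their own copies, with the bound $d_k(G \Box H) \le m\, d_k(H)$ obtained by symmetry. The paper states this construction without detail, whereas your synchrony invariant (occupied vertices have all their $H$-type neighbours protected, so the movement rules in the product mirror those in $G$) is precisely the verification the paper leaves implicit.
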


\begin{proof}
    Consider any  initial successful layout in $G$ using $d_k(G)$ searchers.  Now, for each copy of $G$ in $G \Box H$ place $d_k(G)$ searchers on the corresponding initial layout, using $nd_k(G)$ searchers in total.  Now, in each round, the searchers all move within the copy of $G$ in which they were initially placed, moving exactly as they would in a game played on $G$ alone.   
\end{proof}

\bigskip

\subsection{Cartesian Products of Paths} \label{CartesianPath}

In this section, we consider the $k$-move deduction game on the Cartesian product of paths.  Recall that $P_n$ denotes a path of order $n$.  We give bounds on the value of $d_k(P_m \Box P_n)$ with $m \leq n$, and determine its exact value in several cases. We begin with two results that follow from  Theorems \ref{lower_bound_Cartesian} and  \ref{upper_bound_Cartesian}, respectively.  

\begin{corollary}\label{Cor:unlimited_moves}
    Suppose $2 \le m \le n \le k+1$.  Then $d_k(P_m \Box P_n) = m$.
\end{corollary}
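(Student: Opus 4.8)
The plan is to pin $d_k(P_m \Box P_n)$ down by matching lower and upper bounds, both obtained by feeding $G = P_m$ and $H = P_n$ into the two general product theorems just proved, and then exploiting the hypothesis $n \le k+1$.

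First I would invoke Theorem~\ref{lower_bound_Cartesian}. Every path on at least two vertices has endpoints of degree $1$, so $\delta(P_m) = \delta(P_n) = 1$; substituting into the bound gives $d_k(P_m \Box P_n) \ge \min\{n + \delta(P_m) - 1,\, m + \delta(P_n) - 1\} = \min\{n, m\} = m$, where the last equality uses $m \le n$. This already supplies the lower bound.

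For the upper bound I would use Theorem~\ref{upper_bound_Cartesian}, which yields $d_k(P_m \Box P_n) \le \min\{m\, d_k(P_n),\, n\, d_k(P_m)\}$. Here the hypothesis $n \le k+1$ does the work: a single searcher placed at an endpoint of a path on at most $k+1$ vertices can sweep the whole path within its $k$ allotted moves (equivalently, Lemma~\ref{lem:path_cycle_complete} gives $d_k(P_\ell) = \lceil \ell/(k+1)\rceil = 1$ for $\ell \le k+1$), so $d_k(P_m) = d_k(P_n) = 1$ since $m \le n \le k+1$. Hence $d_k(P_m \Box P_n) \le \min\{m, n\} = m$, and combining with the lower bound gives $d_k(P_m \Box P_n) = m$.

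Because both inequalities fall straight out of earlier results, there is no genuine obstacle here; the only points requiring care are verifying that the hypotheses of the two product theorems hold (connectedness and $2 \le m \le n$, both given) and confirming that the assumption $n \le k+1$ is precisely what collapses the two path deduction numbers to $1$, making the upper bound meet the lower bound exactly.
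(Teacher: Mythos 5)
Your proof is correct and follows essentially the same route as the paper: the lower bound from Theorem~\ref{lower_bound_Cartesian} (using $\delta(P_m)=\delta(P_n)=1$ and $m\le n$) and the upper bound from Theorem~\ref{upper_bound_Cartesian} combined with $d_k(P_n)=1$ for $n\le k+1$. The only cosmetic difference is that you carry the full $\min\{m\,d_k(P_n),\,n\,d_k(P_m)\}$ while the paper simply uses $m\cdot d_k(P_n)\le m$; the substance is identical.
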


\begin{proof}
    By Theorem \ref{lower_bound_Cartesian}, $d_k(P_m \Box P_n) \ge m$.  By Theorem \ref{upper_bound_Cartesian}, $d_k(P_m \Box P_n) \le m\cdot d_k(P_n)$.  Since $n\le k+1$, it follows that $d_k(P_n) = 1$ and $d_k(P_m \Box P_n) \le m$.  Hence, $d_k(P_m \Box P_n) = m$. 
\end{proof}

\bigskip

\begin{corollary}\label{cor:0mod}
 Suppose $m,n \ge 2$.  If $m \equiv 0 \pmod{k+1}$ or $n \equiv 0 \pmod{k+1}$, then $d_k (P_m \Box P_n) =    \frac{nm}{k+1}$.
\end{corollary}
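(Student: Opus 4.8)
The plan is to sandwich $d_k(P_m \Box P_n)$ between matching lower and upper bounds, both of which become exact precisely because of the divisibility hypothesis. Since $P_m \Box P_n$ is a connected graph of order $mn$, the order-based bound of Lemma~\ref{lem:gen_bound_1} immediately gives $d_k(P_m \Box P_n) \ge \left\lceil \frac{mn}{k+1} \right\rceil$. Whenever $(k+1) \mid m$ or $(k+1) \mid n$, we have $(k+1) \mid mn$, so the ceiling is superfluous and the lower bound reads $d_k(P_m \Box P_n) \ge \frac{mn}{k+1}$.

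For the upper bound I would invoke Theorem~\ref{upper_bound_Cartesian}, which gives $d_k(P_m \Box P_n) \le \min\{m\, d_k(P_n),\, n\, d_k(P_m)\}$, together with the value $d_k(P_\ell) = \left\lceil \frac{\ell}{k+1} \right\rceil$ from Lemma~\ref{lem:path_cycle_complete}. If $n \equiv 0 \pmod{k+1}$, then $d_k(P_n) = \frac{n}{k+1}$ is an integer and $m\, d_k(P_n) = \frac{mn}{k+1}$, so the minimum is at most $\frac{mn}{k+1}$; if instead $m \equiv 0 \pmod{k+1}$, the symmetric factor $n\, d_k(P_m) = \frac{mn}{k+1}$ does the same job. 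In either case $d_k(P_m \Box P_n) \le \frac{mn}{k+1}$.

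Combining the two bounds then yields $d_k(P_m \Box P_n) = \frac{mn}{k+1}$, as desired. There is essentially no obstacle here: the entire content is that the divisibility assumption simultaneously removes the ceiling from the general order lower bound and makes one of the path factors in the product upper bound an integer equal to $\frac{mn}{k+1}$. The only point requiring a moment's care is to draw the lower bound from the order-based estimate of Lemma~\ref{lem:gen_bound_1} rather than from the product lower bound of Theorem~\ref{lower_bound_Cartesian}, since the latter only yields $\min\{m,n\}$ for paths (as $\delta(P_\ell)=1$), which is far too weak for the present purpose.
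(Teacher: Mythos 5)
Your proof is correct and follows essentially the same route as the paper: the lower bound $\left\lceil \frac{mn}{k+1} \right\rceil$ from Lemma~\ref{lem:gen_bound_1} combined with the upper bound $m \cdot d_k(P_n)$ (or $n \cdot d_k(P_m)$) from Theorem~\ref{upper_bound_Cartesian} and Lemma~\ref{lem:path_cycle_complete}, with divisibility making the two coincide. The paper phrases the case split as a ``without loss of generality'' on which factor is divisible, but the content is identical.
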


\begin{proof}
Without loss of generality, assume that $n \equiv 0 \pmod{k+1}$.  It follows from Theorem \ref{upper_bound_Cartesian} and Lemma \ref{lem:path_cycle_complete} that $d_k (P_m \Box P_n) \le m \cdot d_k(P_n)= m \cdot \frac{n}{k+1} $. 
   Using the lower bound from Lemma \ref{lem:gen_bound_1}, we also have $d_k (P_m \Box P_n) \ge \frac{nm}{k+1}$.  The result follows.  
\end{proof}

\bigskip

Corollaries  \ref{Cor:unlimited_moves} and \ref{cor:0mod} give us the value of $d_k(P_m \Box P_n)$ exactly for all $m$ and $n$ such that $2 \le m \le n \le k+1$ or $m = k+1 <n$.  
The next subset of Cartesian grids that we are interested in examining are the the grids $P_m \Box P_n$  when $2 \le m \le k$ and $n>k+1$.  For some of these grids, we can improve on the lower bounds given in Lemma \ref{lem:gen_bound_1}.  This improvement is given in Corollary \ref{cor:improve_lower}, but first we introduce some notation.

For subsequent results in this section, we label the vertices of $P_m \Box P_n$ (the $m \times n$ Cartesian grid) using the set $\{(i,j) \mid 0 \le i \le m-1, \; 0 \le j \le n-1\}$, where $(i,j)$ and $(i',j')$ are adjacent if $|i-i'|\le 1$ and $j=j'$, or $|j-j'| \le 1$ and $i=i'$.

For each $j=0, \ldots , n-1$, let $R_j =\{(i,j): i=0, \ldots, m-1\}$ and refer to it as \textit{row} $j$ of the grid.  Similarly, let $C_i = \{(i,j): j=1, \ldots, n-1\}$ for each $i=1, \ldots, m-1$ and refer to it as  \textit{column} $i$ of the grid.

\bigskip

\begin{lemma}\label{lem:another_lower}
    Suppose $2 \le r \le k$ and $m,n \ge r$.  For any subgraph $H \cong P_r \Box P_r$  in $P_m \Box P_n$, a set of $d_k(P_m \Box P_n)$ searchers can protect $P_m \Box P_n$ only if at least $r$ of those searchers each move onto at least one vertex of $H$.
\end{lemma}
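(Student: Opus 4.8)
The plan is to argue by contradiction using the minimality of $d_k(P_m\Box P_n)$. Set up coordinates so that $H$ occupies rows $a,\dots,a+r-1$ and columns $b,\dots,b+r-1$, and suppose we are given an optimal successful layout in which at most $r-1$ searchers ever move onto a vertex of $H$. The guiding intuition is that actively clearing an $r\times r$ block behaves like a sweep of width $r$: by the deduction movement rule a searcher may vacate a cell only when all of that cell's still-unprotected neighbours are simultaneously covered, so the ``frontier'' between protected and unprotected cells is occupied along its entire length, which is exactly the content of Lemma~\ref{lem:boundary}. The aim is to show that when this frontier crosses $H$ it must be $r$ cells wide and that these cells are occupied by $r$ distinct searchers that have genuinely moved onto $H$.

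The key steps, in order, are as follows. First, track the protected region $P_t$ over the stages $t$; since protection is monotone, $P_0\subseteq P_1\subseteq\cdots$ and $H\subseteq P_T$ at the final stage. Let $\sigma$ be the first stage at which a cell of $H$ becomes protected and $\rho$ the stage at which $H$ becomes totally protected, so the frontier crosses $H$ during $[\sigma,\rho]$. The central step is a Menger/bramble-type obstruction: using the $r$ row-paths and $r$ column-paths of $H\cong P_r\Box P_r$ (whose relevant connectivity parameter is $r$), show that at some stage in $[\sigma,\rho]$ the boundary $\partial(P_t)$ must meet $N[H]$ in at least $r$ cells that cannot be ``merged'' into fewer, because any cut of $H$ separating its still-unprotected part from its protected part uses at least one cell per row (or per column) of the block. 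By Lemma~\ref{lem:boundary} each such cell is occupied; identifying these $r$ occupants as distinct searchers is then the source of the bound. This recovers the robust statement that at least $r$ distinct searchers \emph{occupy} a vertex of $H$ at some stage.

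I expect the main obstacle to be twofold. First, the lemma counts distinct searchers over the whole game and asks specifically that they \emph{move onto} $H$, whereas the cut argument naturally yields only a bound on searchers simultaneously present on the boundary. Converting ``present on the frontier'' into ``moves onto a vertex of $H$'' is where the hypothesis $r\le k$ should enter: with $r\le k$ each frontier searcher retains enough of its move budget to be the one that advances into the block, so the occupiers can be taken to be movers. Second, and more delicately, a searcher \emph{statically} placed on a cell of $H$ protects that cell without ever moving onto $H$, so no purely local argument can force $r$ movers; this is precisely where minimality is essential. I would finish by showing that any layout clearing $H$ with at most $r-1$ movers must pre-place surplus searchers on cells of $H$, and that replacing this by a width-$r$ sweep of the block (of the type underlying Corollaries~\ref{Cor:unlimited_moves} and~\ref{cor:0mod}, and compatible with the product bound of Theorem~\ref{upper_bound_Cartesian}) protects the same region while freeing at least one searcher, contradicting the minimality of $d_k(P_m\Box P_n)$. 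Ensuring that this local exchange does not disrupt the clearing of the remainder of the grid is the most delicate part of the argument and the step I would expect to require the most care.
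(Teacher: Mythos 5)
Your proposal correctly identifies two ingredients the paper also relies on --- Lemma~\ref{lem:boundary} and the row/column structure of $H$ --- but its central step is never actually established, and the two patches you propose to finish the argument are respectively unsound and unnecessary. The paper's proof is a short \emph{cumulative} count over two consecutive stages, with no minimality assumption at all: take the last stage $t$ at which $H$ has both an unprotected row and an unprotected column (this exists since, under the contradiction hypothesis, fewer than $r$ searchers start on $H$, so some row and some column of $H$ begin unprotected); say at stage $t+1$ every column of $H$ is at least partially protected. At stage $t$ no column of $H$ is totally protected (otherwise every row would already be partially protected), so each of the $r-a$ partially protected columns contains a boundary vertex of the whole grid and hence, by Lemma~\ref{lem:boundary}, a searcher; each of the $b$ partially protected columns that sends a searcher into a still-unprotected column of $H$ at stage $t+1$ must have held at least two searchers at stage $t$; and the remaining $a-b$ unprotected columns receive searchers from outside $H$ at stage $t+1$. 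This gives $(r-a)+b+(a-b)=r$ distinct searchers occupying vertices of $H$ by stage $t+1$. Note that the count is spread across stages $t$ and $t+1$: the paper never claims, and does not need, that $r$ searchers or $r$ boundary cells of $H$ exist \emph{simultaneously} at a single stage --- yet that simultaneous bound is exactly what your Menger/bramble step would have to deliver, and you do not prove it. Worse, your step targets $N[H]$ rather than $H$; a searcher occupying a boundary cell in $N[H]\setminus H$ contributes nothing to the conclusion unless it later enters $H$.

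The two patches are where the proposal genuinely breaks. First, the move-budget claim (``with $r\le k$ each frontier searcher retains enough of its move budget to be the one that advances into the block, so the occupiers can be taken to be movers'') is not a valid argument: in deduction the searchers' moves are forced by the local rules, not chosen by the player, so you cannot re-designate which searcher advances; in fact the paper's proof of this lemma never uses $r\le k$ in this way. Moreover the mover/occupier distinction you labour over is a red herring: as the lemma is applied in Corollary~\ref{cor:improve_lower}, all that is needed is that $r$ distinct searchers occupy some vertex of $H$ at some stage (so that the searcher sets for $H_1$ and $H_2$, which lie at distance more than $k$, are disjoint), and the paper's count happily includes searchers already sitting inside $H$. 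Second, the closing exchange argument (``replace the local clearing by a width-$r$ sweep, freeing a searcher, contradicting minimality'') is precisely the kind of global timing/interference argument the paper avoids; you yourself flag it as the step requiring the most care, and it is never carried out. It is also unnecessary: the paper's argument applies verbatim to an arbitrary successful layout and never invokes optimality of the number $d_k(P_m \Box P_n)$ of searchers.
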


\begin{proof}
    Let $G = P_m \Box P_n$.  Suppose there is a successful initial layout in $G$ such that for some $r$, where $2 \le r \le k$, there is a subgraph $H \cong P_r \Box P_r$ occupied by fewer than $r$ searchers.   It follows that there are at least one row of $H$ and one column of $H$ that are not occupied by any searcher.

    Suppose that at the end of stage $t$ there is at least one row of $H$ and at least one column in $H$ that are unprotected, but at the end of stage $t+1$ all rows or all columns are protected.    Without loss of generality, assume all columns are protected at the end of stage $t+1$.

    Suppose that at the end of stage $t$ there are $a>0$ unprotected columns and $r-a$ protected columns.  Therefore, at stage $t+1$, at least $a$ searchers move onto unprotected columns.  
    
    Of the $a$ columns that are unprotected at stage $t$, suppose $b$ of those are protected at stage $t+1$ as a result of a searcher moving within $H$ and the remaining $a-b$ columns are protected as a result of a searcher moving onto $H$ from a vertex outside of $H$.
    
    As in the proof of Theorem \ref{lower_bound_Cartesian}, no column is fully protected and every partially protected column is occupied by a searcher.  Furthermore, at stage $t+1$, if a searcher moves from a partially protected column to an unprotected column that means there were at least two searchers on the partially protected column at stage $t$.   Hence, there are at least $b + r-a$ searchers in $H$ at stage $t$, and an additional $a-b$ searchers that move onto $H$ from outside of $H$ at stage $t+1$.  Therefore, by stage $t+1$, at least $r$ different searchers have moved onto some vertex of $H$.
\end{proof}

\begin{corollary}\label{cor:improve_lower}
Suppose $1 \le r,s \le k$.  If $s \ge r$, then $d_k (P_r \Box P_{k+1+s}) \ge r + \min\{s-r+1, r\}$.
\end{corollary}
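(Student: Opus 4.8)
The plan is to force searchers onto two square subgraphs placed at opposite ends of the tall grid and to show that the searchers required by the two squares must be different, so their counts simply add. I use the row/column labelling introduced just before the statement, so that $G = P_r \Box P_{k+1+s}$ has $r$ columns and $k+1+s$ rows $R_0,\dots,R_{k+s}$, each row consisting of $r$ vertices. Note that the hypotheses give $r \le s \le k$, so in particular $r \le k$, and the largest square that fits across the grid has side $r$.

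First I would take $H_1$ to be the $r\times r$ square on rows $R_0,\dots,R_{r-1}$ (using all $r$ columns), which is isomorphic to $P_r\Box P_r$. By Lemma~\ref{lem:another_lower} (square side $r$, which satisfies $2\le r\le k$), at least $r$ searchers occupy a vertex of $H_1$ at some stage. The key observation, which is what turns the single‑square Lemma~\ref{lem:another_lower} into a separation result, is a reachability bound: a searcher makes at most $k$ moves and each move changes its row index by at most one, so the difference between the largest and smallest row index it ever visits is at most $k$. Consequently any searcher that visits a row of index at most $r-1$ can never visit a row of index greater than $k+r-1$.

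Next I would set $q=\min\{s-r+1,\,r\}$ and let $H_2$ be a $q\times q$ square occupying the bottom rows $R_{k+s-q+1},\dots,R_{k+s}$ and $q$ of the columns. Since $q\le s-r+1$, the smallest row index used by $H_2$ is $k+s-q+1\ge k+r$, which is strictly larger than $k+r-1$; by the reachability bound no searcher that ever touches $H_1$ can touch $H_2$, so the searchers forced onto the two squares form disjoint sets. Applying Lemma~\ref{lem:another_lower} a second time (square side $q$) gives at least $q$ searchers on $H_2$, and adding the two disjoint contributions yields $d_k(G)\ge r+q=r+\min\{s-r+1,\,r\}$, as claimed. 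Here the two terms inside the minimum encode precisely the two constraints on $H_2$: the bottom region contains only $s-r+1$ rows that the top searchers cannot reach, and the grid is only $r$ columns wide.

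The main point requiring care is the reachability bound and the resulting disjointness; everything else is bookkeeping to verify that $H_2$ fits in the available bottom rows and columns. The one place the argument needs a small patch is the degenerate case where Lemma~\ref{lem:another_lower} does not literally apply because its hypothesis $2\le q\le k$ (or $2\le r\le k$) fails, i.e.\ when $r=1$ or $q=1$ (equivalently $s=r$). In those cases the relevant ``square'' is a single vertex, and I would replace the appeal to the lemma by the trivial fact that a protected vertex must be visited by at least one searcher; the reachability/disjointness step is unchanged, so the bound $r+q$ still holds.
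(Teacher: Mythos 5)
Your proof is correct and follows essentially the same route as the paper's: the paper likewise takes $H_1$ to be the top $r \times r$ square and a far-away bottom region $H_2$ (there, the rectangle $P_r \Box P_{s-r+1}$ on rows $k+r$ through $k+s$), applies Lemma~\ref{lem:another_lower} to each, and concludes from the fact that every vertex of $H_1$ is at distance at least $k+1$ from every vertex of $H_2$, so no searcher can visit both. Your explicit patch for the degenerate cases $r=1$ or $s=r$, where the hypothesis $2 \le r \le k$ of Lemma~\ref{lem:another_lower} fails and one must fall back on the trivial fact that every protected vertex is visited by some searcher, is a point of care the paper's proof glosses over.
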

\begin{proof}
    Consider the two subgraphs of $P_r \Box P_{k+1+s}$ induced by the sets $\{(i,j): 0 \le i \le r-1, 0 \le j \le r-1\}$ and $\{(i,j): 0 \le i \le r-1, k+r \le j \le k+s\}$, respectively.  Refer to these subgraphs as $H_1$ and $H_2$, respectively, and note that $H_1 \cong P_r \Box P_r$ and $H_2 \cong P_r \Box P_{s-r+1}$.  It follows from Lemma \ref{lem:another_lower}, at least $r$ searchers move onto vertices of $H_1$ and at least $\min\{s-r+1, r\}$ searchers move onto vertices of $H_2$.  

    Since each vertex in $H_1$ is at least distance $k+1$ from every vertex in $H_2$, then no searcher can protect vertices in both $H_1$ and $H_2$.  Therefore, at least $r + \min\{s-r+1, r\}$ searchers are required to protect $P_r \Box P_{k+1+s}$ and
    $d_k (P_r \Box P_{k+1+s}) \ge r + \min\{s-r+1, r\}$. 
\end{proof}


The remaining results in this section provide a number of different successful initial layouts that yield upper bounds on the $k$-move deduction number.

\begin{figure}[ht]
\begin{center} \includegraphics[width = .5\linewidth]{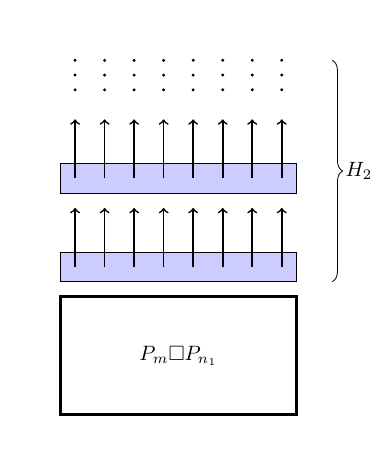}\end{center}
\caption{$P_m \Box P_n$ can be partitioned into two subgrids $P_m \Box P_{n_1}$ and $H_2\cong P_m \Box P_{n_2}$}\label{fig:partition}\end{figure}

\begin{lemma}\label{lemma:partition}
   Suppose $m \ge 1$ and $n > k+1$.  Then $d_k(P_m \Box P_n) \le d_k(P_m \Box P_{n_1}) + d_k(P_m \Box P_{n_2}) $ for positive integers $n_1$ and $n_2$ such that $n_1+n_2=n $  and $n_2 \equiv 0 \bmod (k+1)$ .
\end{lemma}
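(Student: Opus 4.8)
The plan is to realize the partition pictured in Figure~\ref{fig:partition} and then superimpose an optimal layout on each piece, arguing that the two layouts run alongside one another without interfering. I would place $F \cong P_m \Box P_{n_1}$ on columns $0,\dots,n_1-1$ and $H_2 \cong P_m \Box P_{n_2}$ on columns $n_1,\dots,n-1$, so that the two pieces meet along the seam between columns $n_1-1$ and $n_1$. On $F$ I would take an optimal successful layout $L_1$ with $d_k(P_m \Box P_{n_1})$ searchers. On $H_2$, because $n_2 \equiv 0 \pmod{k+1}$, I would use the ``row strategy'': place one searcher at every vertex $(i,j)$ with $n_1 \le j \le n-1$ and $j \equiv n_1 \pmod{k+1}$, that is, fill the columns $n_1, n_1+(k+1), n_1+2(k+1),\dots$ in every row. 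By Corollary~\ref{cor:0mod} (and Lemma~\ref{lem:path_cycle_complete} when $m=1$) this uses exactly $\tfrac{m n_2}{k+1} = d_k(P_m \Box P_{n_2})$ searchers, and each searcher is aimed to sweep its block of $k+1$ consecutive columns by moving rightward, away from the seam. The combined layout then uses $d_k(P_m \Box P_{n_1}) + d_k(P_m \Box P_{n_2})$ searchers, and it remains to check that it is successful.

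First I would argue that $F$ clears exactly as it would in the game played on $P_m \Box P_{n_1}$ in isolation. The key point is that the seam column $n_1$ is fully occupied in the initial layout, hence protected from stage~$1$ onward and forever after. Consequently each vertex $(i,n_1-1)$ in the last column of $F$ has the same number of unprotected neighbours in $P_m \Box P_n$ as it does in $P_m \Box P_{n_1}$ alone (its extra neighbour $(i,n_1)$ never counts as unprotected), while every other vertex of $F$ has identical neighbourhoods in the two graphs. Since the movement rule depends only on counts of unprotected neighbours, the searchers of $L_1$ make precisely the moves they would make on $P_m \Box P_{n_1}$, never crossing into the protected column $n_1$; thus $F$ is fully protected and its searchers never disturb $H_2$.

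The delicate part, which I expect to be the main obstacle, is verifying that the row strategy still clears $H_2$ despite the seam, since here the forced nature of the movement rule must be controlled. A block's leading searcher, sitting at the left end $(i,c)$ of its block, has both its left and right neighbours unprotected (its vertical neighbours are protected: immediately so at a block's left end, which is an initially occupied column, and via the synchronization below once it has advanced into the block). With a single searcher it is therefore forced to wait until its left neighbour becomes protected, after which it advances one column. For the seam block this left neighbour is $(i,n_1-1)$, which $L_1$ eventually protects; for a later block it is the right end of the previous block, protected as that block is swept. So the blocks clear in a left-to-right cascade, each searcher moving only rightward and making exactly $k$ moves to traverse its $k+1$ columns; because waiting consumes no move and $k$ bounds moves rather than stages, no searcher exhausts its moves prematurely.

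The remaining subtlety is cross-row synchronization. Because $L_1$ may protect the seam vertices $(i,n_1-1)$ at different stages for different rows, the rows advance through $H_2$ at different rates: a searcher that has reached column $c$ cannot proceed to $c+1$ until its vertical neighbours in column $c$ (in rows $i\pm 1$) are protected, so it simply waits for the slower rows. I would check that this ``wave'' never deadlocks: every seam vertex is protected in finite time since $L_1$ succeeds, column $n_1$ is protected throughout so the seam-block searchers are never blocked vertically, and each column of $H_2$ becomes fully protected once its slowest row reaches it, whereupon the whole front advances. This is precisely where the hypothesis $n_2 \equiv 0 \pmod{k+1}$ is used: it is what lets the optimal row layout fill the seam column and send its searchers fleeing the seam, which both leaves $F$ undisturbed and matches the required searcher count. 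Concluding that $F$ and $H_2$ are each fully protected then establishes the bound.
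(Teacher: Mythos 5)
Your proposal is correct and follows essentially the same approach as the paper's proof: partition the grid into $P_m \Box P_{n_1}$ and $P_m \Box P_{n_2}$, place an optimal layout on the first piece and the block-sweep layout (with the entire seam face initially occupied) on the second, and argue that the protected seam decouples the two games so neither set of searchers ever crosses it. Your explicit cascade/no-deadlock analysis for the $n_2$-piece just spells out in detail what the paper compresses into the observation that the $T_2$ searchers traverse pairwise-disjoint paths, so ``timing is irrelevant.''
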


\begin{proof}
 Let $G = P_m \Box P_n$ where $m \ge 1$ and $n > k+1$.  Partition the vertices of $G$ into two sets: $S_1 = \{(i,j): 0\le i \le m-1, 0 \le j \le n_1 -1\}$ and $S_2 = \{(i,j): 0\le i \le m-1, n_1 \le j \le n-1\}$.   Let $H_1$ and $H_2$ be the subgraphs induced on $S_1$ and $S_2$, respectively.   It follows that $H_1 \cong P_{m} \Box P_{n_1}$ and $H_2 \cong P_{m} \Box P_{n_2}$.  
 
 Since $n_2 \equiv 0 \bmod{(k+1)}$, we know from the proof of Theorem \ref{upper_bound_Cartesian} combined with the results of Lemma~\ref{lem:path_cycle_complete} and Corollary~\ref{cor:0mod}, that there is an initial successful layout of $d_k(P_{m} \Box P_{n_2})$ searchers in $H_2$ such that every vertex in row 0 of $H_2$ is occupied by a searcher and no two searchers occupy the same vertex.  Let $T_2$ be the set of vertices in $G$ corresponding to such an initial successful layout in $H_2$.    In $H_1$, consider any initial successful layout that uses $d_k(P_{m} \Box P_{n_1})$ searchers.  Let $T_1$ be the set of vertices in $G$ corresponding to such an initial successful layout in $H_1$.  

 Consider the initial layout in $G$ where searchers are placed on  $T_1 \cup T_2$ as described above.  Since every edge from a vertex in $S_1$ to a vertex in $S_2$ has its endpoint in $S_2$ occupied by a searcher, no searcher moves from a vertex in $S_1$ to a vertex in $S_2$ during the course of the game. Furthermore, no searcher on a vertex $R_{n_1} \cap T_2$ moves until after the vertex adjacent to it in row $R_{n_1-1}$ of $G$ has been protected.  Therefore, no searcher in $S_2$ will move onto a vertex in $S_1$ during the course of the game.  It follows that the searchers starting in $T_1$ will move exactly as they do in $H_1$ during the course of the game.

 The searchers starting in $T_2$ will move along the same paths in $G$ as they would in $H_2$ (see Figure~\ref{fig:partition}), but not necessarily at the same stages.  However, since no two searchers ever occupy the same vertex, timing is irrelevant.   The result follows.
\end{proof}

\begin{corollary}\label{cor:improve_2}
Suppose $1 \le r,s \le k$.  If $s \ge 2r-1$, then $d_k (P_r \Box P_{k+1+s}) = 2r$.
\end{corollary}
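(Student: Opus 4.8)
The plan is to prove the equality by establishing the two bounds separately, and the pleasant observation is that all the hard work has already been done by the preceding results. For the lower bound I would simply invoke Corollary~\ref{cor:improve_lower}. Since $r \ge 1$ we have $2r-1 \ge r$, so the hypothesis $s \ge 2r-1$ forces both $s \ge r$ (so that Corollary~\ref{cor:improve_lower} applies) and $s - r + 1 \ge r$. Hence $\min\{s-r+1,\,r\} = r$, and the corollary yields $d_k(P_r \Box P_{k+1+s}) \ge r + r = 2r$. This is where the specific threshold $s \ge 2r-1$ is used; the upper bound will only need $s \le k$.

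For the upper bound, the key move is to choose the right decomposition in Lemma~\ref{lemma:partition}. I would write $n = k+1+s$ as $n_1 + n_2$ with $n_1 = s$ and $n_2 = k+1$; this is legitimate because $n = k+1+s > k+1$ (as $s \ge 1$) and $n_2 = k+1 \equiv 0 \pmod{k+1}$. The lemma then gives
\[
d_k(P_r \Box P_{k+1+s}) \;\le\; d_k(P_r \Box P_s) + d_k(P_r \Box P_{k+1}).
\]
It remains to evaluate the two summands as $r$ each. For the second, $k+1 \equiv 0 \pmod{k+1}$, so Corollary~\ref{cor:0mod} gives $d_k(P_r \Box P_{k+1}) = \frac{r(k+1)}{k+1} = r$ when $r \ge 2$; the case $r=1$ reduces to the path $P_{k+1}$, for which Lemma~\ref{lem:path_cycle_complete} gives $d_k(P_{k+1}) = 1$. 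For the first summand, since $s \ge 2r-1 \ge r$ and $s \le k \le k+1$, I can order the factors as $r \le s \le k+1$ and apply Corollary~\ref{Cor:unlimited_moves} to conclude $d_k(P_r \Box P_s) = r$ (again the degenerate $r=1$ case collapses to $d_k(P_s) = \lceil s/(k+1)\rceil = 1$ since $s \le k$). Summing, $d_k(P_r \Box P_{k+1+s}) \le 2r$, which matches the lower bound.

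The step I expect to require the most care is not conceptual but bookkeeping: verifying that the hypotheses of the three invoked results genuinely hold, in particular confirming the ordering $r \le s$, checking $s \le k+1$, and separately treating the boundary value $r=1$ where Corollaries~\ref{Cor:unlimited_moves} and~\ref{cor:0mod} (which assume $m \ge 2$) must be replaced by the elementary path computation of Lemma~\ref{lem:path_cycle_complete}. As an alternative to the partition argument, one can give a direct layout: place $r$ searchers on column $0$ and $r$ on column $k+s$, one per row in each case. Each group forms a full vertical ``wall,'' so that every searcher always has exactly one unprotected (horizontal) neighbour and the two walls sweep toward each other, the left one clearing columns $0,\dots,k$ and the right one clearing columns $s,\dots,k+s$; since $s \le k$ these ranges overlap and cover every column, with each searcher moving at most $\lceil (k+s)/2\rceil \le k$ times. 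I would present the partition proof as the main argument since it is shorter, and mention the wall construction only as intuition for why $2r$ suffices.
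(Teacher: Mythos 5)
Your proof is correct and takes essentially the same route as the paper's: the lower bound via Corollary~\ref{cor:improve_lower} (with $s \ge 2r-1$ forcing $\min\{s-r+1,r\}=r$), and the upper bound via Lemma~\ref{lemma:partition} with the decomposition $n_1 = s$, $n_2 = k+1$, evaluating $d_k(P_r \Box P_s)$ and $d_k(P_r \Box P_{k+1})$ by Corollaries~\ref{Cor:unlimited_moves} and~\ref{cor:0mod}. Your explicit handling of the $r=1$ boundary case (where those corollaries' hypothesis $m \ge 2$ fails and one falls back on Lemma~\ref{lem:path_cycle_complete}) is a small bookkeeping refinement the paper glosses over, and you correctly cite Corollary~\ref{cor:improve_lower} where the paper's text points to Lemma~\ref{lem:another_lower}.
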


\begin{proof} Suppose $s \ge 2r-1$.
It follows from Lemma \ref{lemma:partition}, that $d_k (P_r \Box P_{k+1+s}) \le d_k(P_r \Box P_s) + d_k(P_r \Box P_{k+1}) = \min\{s,r\} + r = 2r$.  Furthermore, by Lemma \ref{lem:another_lower},  $d_k (P_r \Box P_{k+1+s}) \ge r+ \min\{s-r+1, r\}=2r$.  
\end{proof}
\begin{theorem}\label{Upper_bound_Cartesian2}

   For any $m,n \ge 2$, $$d_k(P_m \Box P_n) \le \frac{mn -r_1r_2}{k+1} + \min\{r_1, r_2\}$$ where $r_1 = m \bmod {(k+1)}$ and $r_2 = n \bmod {(k+1)}$.
\end{theorem}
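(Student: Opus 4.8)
The plan is to realise the claimed bound as a sum of block contributions, obtained by peeling off from $P_m \Box P_n$ all slabs whose extent in one coordinate is a multiple of $k+1$, since for those the deduction number is pinned down exactly. Write $m = q_1(k+1) + r_1$ and $n = q_2(k+1) + r_2$ with $0 \le r_1,r_2 \le k$, and record the identity
\[
\frac{mn - r_1 r_2}{k+1} = q_1 q_2 (k+1) + q_1 r_2 + q_2 r_1,
\]
so that the target quantity equals $q_1 q_2 (k+1) + q_1 r_2 + q_2 r_1 + \min\{r_1,r_2\}$. I would first dispose of the degenerate cases. If $r_1 = 0$ or $r_2 = 0$, then $\min\{r_1,r_2\}=0$ and the bound collapses to $\frac{mn}{k+1}$, which is exactly the value furnished by Corollary~\ref{cor:0mod}. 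If $q_1 = q_2 = 0$ (so $m=r_1$ and $n=r_2$ are both at most $k$), then both factors have order at most $k+1$ and Corollary~\ref{Cor:unlimited_moves} gives $d_k = \min\{m,n\} = \min\{r_1,r_2\}$, matching again. Thus I may assume $q_1,q_2 \ge 1$ and $r_1,r_2 \ge 1$, treating the case where exactly one $q_i$ vanishes by a single peeling step (see below).

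For the main case the engine is Lemma~\ref{lemma:partition}. Applying it in the second coordinate with $n_1 = r_2$ and $n_2 = q_2(k+1)$ — legitimate since $n = q_2(k+1)+r_2 > k+1$ and both parts are positive — yields
\[
d_k(P_m \Box P_n) \le d_k(P_m \Box P_{r_2}) + d_k(P_m \Box P_{q_2(k+1)}),
\]
and Corollary~\ref{cor:0mod} evaluates the second term as $m q_2$. Since $P_m \Box P_{r_2} \cong P_{r_2}\Box P_m$ and $m = q_1(k+1)+r_1 > k+1$, I would then apply Lemma~\ref{lemma:partition} a second time in the long coordinate, splitting $m$ as $r_1 + q_1(k+1)$, to get
\[
d_k(P_m \Box P_{r_2}) \le d_k(P_{r_1}\Box P_{r_2}) + d_k(P_{q_1(k+1)}\Box P_{r_2}),
\]
where Corollary~\ref{cor:0mod} gives $q_1 r_2$ for the second term and the first is the \emph{corner} contribution. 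Because $r_1,r_2 \le k \le k+1$, Corollary~\ref{Cor:unlimited_moves} (or Lemma~\ref{lem:path_cycle_complete} when one of $r_1,r_2$ equals $1$, so the corner is a path) gives $d_k(P_{r_1}\Box P_{r_2}) = \min\{r_1,r_2\}$. Combining the three estimates and using $m q_2 = q_1 q_2(k+1) + r_1 q_2$, I obtain
\[
d_k(P_m \Box P_n) \le q_1 q_2(k+1) + r_1 q_2 + q_1 r_2 + \min\{r_1,r_2\},
\]
which is precisely the asserted bound.

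The conceptual heavy lifting — that the peeled slabs clear without their independent sweeps interfering — is already supplied by Lemma~\ref{lemma:partition}, whose layout fully occupies the shared boundary column of the block of width $\equiv 0 \pmod{k+1}$ and thereby forbids searchers from crossing between the two pieces; chaining it twice is therefore sound, and the only genuinely new work is arithmetic. Accordingly, I expect the main obstacle to be not the core argument but the bookkeeping across the degenerate ranges: one must check at each peeling step that the hypotheses $n>k+1$ (respectively $m>k+1$) and the positivity of both parts actually hold, and that the corner block is handled as a path rather than a grid whenever $\min\{r_1,r_2\}=1$, since Corollary~\ref{Cor:unlimited_moves} requires both dimensions to be at least $2$. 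In the sub-case where exactly one of $q_1,q_2$ is $0$, only one peeling step applies and the surviving factor already has order at most $k+1$, so Corollary~\ref{Cor:unlimited_moves} or Lemma~\ref{lem:path_cycle_complete} closes it directly; I would verify that the same arithmetic identity reproduces the stated bound there as well.
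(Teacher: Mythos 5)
Your proposal is correct and takes essentially the same route as the paper's proof: both peel off the multiple-of-$(k+1)$ slabs by applying Lemma~\ref{lemma:partition} twice (first in the $n$-coordinate, then in the $m$-coordinate of the leftover strip), evaluate those slabs via Corollary~\ref{cor:0mod}, handle the $P_{r_1} \Box P_{r_2}$ corner via Corollary~\ref{Cor:unlimited_moves}, and close with the same arithmetic identity $\frac{mn-r_1r_2}{k+1} = mq_2 + r_2q_1$. Your bookkeeping of the degenerate ranges (including treating the corner as a path when $\min\{r_1,r_2\}=1$) is, if anything, slightly more explicit than the paper's.
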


\begin{proof}
Suppose $2 \le m \le n$, $r_1 = m \bmod {(k+1)}$ and $r_2 = n \bmod {(k+1)}$.    By Corollary \ref{cor:0mod}, the result  holds when $r_1 = 0$ or $r_2 =0$.  Therefore, we will assume that $r_1, r_2 \ge 1$.

Let $q_1 = \frac{m-r_1}{k+1}$ and $q_2 = \frac{n-r_2}{k+1}$.  We now consider cases based on the values of $q_1$ and $q_2$.  In all cases we will show that $d_k(P_m \Box P_n) \le mq_2 + r_2q_1 + \min\{r_1, r_2\}$.

Case 1:  {\it  $q_1 = 0$ or $q_2 = 0$}.

If $q_1 = q_2 = 0$, then by Corollary \ref{Cor:unlimited_moves},  $d_k(P_m \Box P_n) = \min\{m,n\} = \min\{r_1,r_2\}$.  Therefore, $d_k(P_m \Box P_n) =  mq_2+r_2q_1 +\min\{r_1,r_2\}$

If $q_1 = 0$ and $q_2 \ge 1$, then by Lemma \ref{lemma:partition}, $d_k(P_m \Box P_n) \le d_k(P_m \Box P_{r_2}) + d_k(P_m \Box P_{n-r_2}) = \min \{r_1, r_2\} + mq_2 = mq_2+ r_2q_1 + \min\{r_1, r_2\}$.

Similarly, if $q_2 = 0$ and $q_1 \ge 1$, 
$d_k(P_m \Box P_n) \le \min \{r_1, r_2\} + nq_1 = mq_2+ r_2q_1 + \min\{r_1, r_2\}$.

Case 2:  $q_1, q_2 \ge 1$

By Lemma \ref{lemma:partition}, we have $d_k(P_m \Box P_m) \le d_k(P_m \Box P_{r_2}) + d_k(P_m \Box P_{n-r_2})$.  Furthermore, $d_k(P_m \Box P_{r_2}) =  d_k( P_{r_2} \Box P_m) \le d_k( P_{r_2} \Box P_{r_1}) + d_k( P_{r_2} \Box P_{m-r_1})$.  It follows that $d_k(P_m \Box P_n) \le d_k(P_m \Box P_{n-r_2}) + d_k( P_{r_2} \Box P_{m-r_1})+d_k( P_{r_2} \Box P_{r_1}) = mq_2 + r_2q_1 + \min\{r_1, r_2\}$.

Hence, in all cases $d_k(P_m \Box P_n) \le mq_2+ r_2q_1 + \min\{r_1, r_2\}$.   Since $mn= m(q_2(k+1)+r_2) = (k+1)(mq_2) + mr_2 =(k+1)(mq_2)+ (k+1)q_1r_2 + r_1r_2$, it follows that $\frac{mn-r_1r_2}{k+1} = mq_2+r_2q_1$.  Hence, $d_k(P_m \Box P_n) \le \frac{mn-r_1r_2}{k+1} + \min\{r_1, r_2\}$.
\end{proof}

Figure~\ref{fig:Cartesian Product} shows a successful layout with nine searchers on $P_5 \Box P_5$ with $k=2$.  Here $r_1=r_2=2$, so that $\frac{mn-r_1r_2}{k+1}+\min\{r_1r_2\} = \frac{25-4}{3}+2=9$.

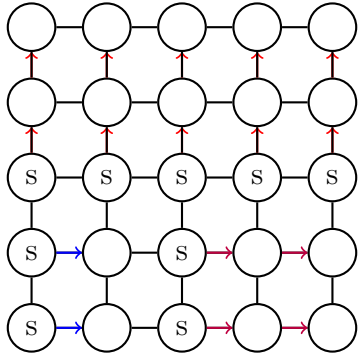
\begin{figure}[ht]
\begin{center}
\begin{tikzpicture}
\begin{scope}[every node/.style={circle,thick,draw}]
    \node (1) at (0,0) {s};
    \node (2) at (1,0) {\phantom{s}};
    \node (3) at (2,0) {s};   
    \node (4) at (3,0) {\phantom{s}};   
    \node (5) at (4,0) {\phantom{s}};
    \node (6) at (0,1) {s};
    \node (7) at (1,1) {\phantom{s}};
    \node (8) at (2,1) {s};
    \node (9) at (3,1) {\phantom{s}};
    \node (10) at (4,1) {\phantom{s}};
    \node (11) at (0,2) {s};
    \node (12) at (1,2) {s};
    \node (13) at (2,2) {s};
    \node (14) at (3,2) {s};
    \node (15) at (4,2) {s};
    \node (16) at (0,3) {\phantom{s}};
    \node (17) at (1,3) {\phantom{s}};
    \node (18) at (2,3) {\phantom{s}};
    \node (19) at (3,3) {\phantom{s}};
    \node (20) at (4,3) {\phantom{s}};
    \node (21) at (0,4) {\phantom{s}};
    \node (22) at (1,4) {\phantom{s}};
    \node (23) at (2,4) {\phantom{s}};
    \node (24) at (3,4) {\phantom{s}};
    \node (25) at (4,4) {\phantom{s}};
  \end{scope}

\begin{scope}[every path/.style={thick}]
    \path [-] (1) edge node {} (2);
    \path [-] (2) edge node {} (3);
    \path [-] (3) edge node {} (4);
    \path [-] (4) edge node {} (5);
    \path [-] (6) edge node {} (7);
    \path [-] (7) edge node {} (8);
    \path [-] (8) edge node {} (9);
    \path [-] (9) edge node {} (10);
    \path[red] [->] (11) edge node {} (16);
    \path[red] [->] (12) edge node {} (17);
    \path[red] [->] (13) edge node {} (18);
    \path[red] [->] (14) edge node {} (19); 
    \path[red] [->] (15) edge node {} (20);
    \path[red] [->] (16) edge node {} (21);
    \path[red] [->] (17) edge node {} (22);
    \path[red] [->] (18) edge node {} (23);
    \path[red] [->] (19) edge node {} (24); 
    \path[red] [->] (20) edge node {} (25);
    \path [-] (11) edge node {} (12);
    \path [-] (12) edge node {} (13);
    \path [-] (13) edge node {} (14);   
    \path [-] (14) edge node {} (15);
    \path [-] (11) edge node {} (16);    
    \path [-] (12) edge node {} (17);    
    \path [-] (13) edge node {} (18);    
    \path [-] (14) edge node {} (19);    
    \path [-] (15) edge node {} (20);  
    \path[blue] [->] (1) edge node {} (2); 
    \path[blue] [->] (6) edge node {} (7);  
    \path [-] (18) edge node {} (19);    
    \path  [-] (19) edge node {} (20);
    \path  [-] (16) edge node {} (17);
    \path  [-] (17) edge node {} (18);
    \path [-] (16) edge node {} (21);    
    \path [-] (17) edge node {} (22);
    \path [-] (18) edge node {} (23); 
    \path [-] (19) edge node {} (24);    
    \path [-] (20) edge node {} (25);
    \path [-] (21) edge node {} (22);
    \path [-] (22) edge node {} (23);
    \path [-] (23) edge node {} (24);
    \path [-] (24) edge node {} (25);
      \path [-] (1) edge node {} (6);
       \path [-] (2) edge node {} (7);
        \path [-] (3) edge node {} (8);
         \path [-] (4) edge node {} (9);
          \path [-] (5) edge node {} (10);
            \path [-] (11) edge node {} (6);
       \path [-] (12) edge node {} (7);
        \path [-] (13) edge node {} (8);
         \path [-] (14) edge node {} (9);
          \path [-] (15) edge node {} (10);
    \path[purple] [->] (3) edge node {} (4);
    \path[purple] [->] (4) edge node {} (5);
    \path[purple] [->] (8) edge node {} (9);
    \path[purple] [->] (9) edge node {} (10);
    \end{scope}
\end{tikzpicture}
\end{center}
\caption{Successful initial layout for $P_5 \Box P_5$ when $k=2$} 
\label{fig:Cartesian Product}
 
\end{figure}

\begin{corollary}
    If $r_1 =1$, $r_2 = 1$, $r_1 = k$ or $r_2 = k$, then $d_k(P_m \Box P_n) = \left \lceil \frac{mn}{k+1} \right \rceil$.
\end{corollary}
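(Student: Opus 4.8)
The plan is to squeeze $d_k(P_m \Box P_n)$ between the universal lower bound of Lemma~\ref{lem:gen_bound_1} and the upper bound of Theorem~\ref{Upper_bound_Cartesian2}, and to show that under each of the four hypotheses these two bounds coincide. Since $P_m \Box P_n$ has order $mn$, Lemma~\ref{lem:gen_bound_1} gives $d_k(P_m \Box P_n) \ge \lceil mn/(k+1) \rceil$, while Theorem~\ref{Upper_bound_Cartesian2} gives $d_k(P_m \Box P_n) \le \frac{mn - r_1 r_2}{k+1} + \min\{r_1, r_2\}$. Thus it suffices to prove that the right-hand side of the upper bound equals $\lceil mn/(k+1) \rceil$ whenever $r_1 \in \{1, k\}$ or $r_2 \in \{1, k\}$.

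First I would dispose of the degenerate cases $r_1 = 0$ or $r_2 = 0$, which are already settled by Corollary~\ref{cor:0mod} (there $k+1 \mid mn$, so the ceiling equals $mn/(k+1)$ and the claim holds); I therefore assume $1 \le r_1, r_2 \le k$. Writing $m = q_1(k+1) + r_1$ and $n = q_2(k+1) + r_2$ and expanding, one obtains $mn = (k+1)\bigl(q_1 q_2 (k+1) + q_1 r_2 + q_2 r_1\bigr) + r_1 r_2$, so that $\frac{mn - r_1 r_2}{k+1}$ is the integer $N := q_1 q_2 (k+1) + q_1 r_2 + q_2 r_1$ and consequently $\lceil mn/(k+1) \rceil = N + \lceil r_1 r_2/(k+1) \rceil$. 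Comparing with the upper bound $N + \min\{r_1, r_2\}$, the corollary reduces to the single arithmetic identity
\[
\left\lceil \frac{r_1 r_2}{k+1} \right\rceil = \min\{r_1, r_2\}.
\]

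The final step is to verify this identity in each of the four cases. If $r_1 = 1$, then $\lceil r_2/(k+1) \rceil = 1 = \min\{1, r_2\}$ because $1 \le r_2 \le k < k+1$; the case $r_2 = 1$ is symmetric. If $r_1 = k$, then $\frac{k r_2}{k+1} = r_2 - \frac{r_2}{k+1}$ with $0 < \frac{r_2}{k+1} < 1$, so the ceiling is $r_2 = \min\{k, r_2\}$; the case $r_2 = k$ is symmetric. In every case the lower and upper bounds agree, giving $d_k(P_m \Box P_n) = \lceil mn/(k+1) \rceil$. There is no genuine obstacle here beyond bookkeeping: the only substance is the elementary observation that $r_1 r_2/(k+1)$ lands in the half-open interval $(\min\{r_1,r_2\} - 1, \min\{r_1,r_2\}]$ precisely when one of the residues is $1$ or $k$, and everything else is a direct appeal to the two previously established bounds.
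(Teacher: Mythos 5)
Your proof is correct and takes essentially the same approach as the paper: both squeeze $d_k(P_m \Box P_n)$ between the lower bound of Lemma~\ref{lem:gen_bound_1} and the upper bound of Theorem~\ref{Upper_bound_Cartesian2}, reduce to the arithmetic identity $\left\lceil \frac{r_1 r_2}{k+1}\right\rceil = \min\{r_1,r_2\}$, and verify it by the same case analysis ($r_1$ or $r_2$ equal to $1$, then equal to $k$ by the telescoping $\frac{kr_2}{k+1} = r_2 - \frac{r_2}{k+1}$). One small caveat: your closing remark that the identity holds \emph{precisely} when one residue is $1$ or $k$ is false --- for instance $k=5$, $r_1=2$, $r_2=4$ gives $\left\lceil \frac{8}{6}\right\rceil = 2 = \min\{r_1,r_2\}$ --- but since only the forward direction is ever used, this does not affect the validity of your argument.
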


\begin{proof}

Since  $\left \lceil \frac{mn}{k+1} \right \rceil = \frac{mn-r_1r_2}{k+1}+ \left \lceil \frac{r_1r_2}{k+1}\right \rceil$ and   $d_k(P_m \Box P_n) \ge \left \lceil \frac{mn}{k+1} \right \rceil$, it suffices to show that 
$\left \lceil \frac{r_1r_2}{k+1}\right \rceil = \min\{r_1, r_2\}$.  

We have previously established the result when $r_1 = 0$ or $r_2 = 0$.  Therefore, 
without loss of generality, we assume $r_1 \ge 1$ and $r_2 \ge 1$. 

 If $r_1 = 1$ or $r_2=1$, then $\left \lceil \frac{r_1r_2}{k+1} \right \rceil =  1 = \min\{r_1, r_2\}$.  

 If $r_1 = k$, then $\left \lceil \frac{r_1r_2}{k+1} \right \rceil =  \left \lceil\frac{kr_2}{k+1}\right \rceil = \left \lceil r_2 - \frac{r_2}{k+1}\right \rceil$.  Since $\frac{r_2}{k+1} \le \frac{k}{k+1}<1$, it follows that $\left \lceil \frac{kr_2}{k+1}\right \rceil = r_2$.   Hence, $\left \lceil \frac{r_1r_2}{k+1} \right \rceil = r_2 = \min\{r_1, r_2\}.$  Similarly, if $r_2=k$, then $\lceil \frac{r_1}{r_2}{k+1}\rceil = r_1 = \min\{r_1,r_2\}$.
 \end{proof}

\bigskip
\begin{lemma}\label{lem:halfway1}
     Suppose $G= P_{k+1+r} \Box P_{s}$ such that $2 \le r < k$ and $\left \lceil \frac{r}2 \right \rceil< s$.  
     Let $\ell = \max \left\{ \left\lceil \frac{r}{2} \right\rceil ,\left \lceil \frac{2s + r - (k+1)}{2} \right\rceil \right \}$.
     If $\ell < r$, then $d_k(G) \le s +\ell$.  
\end{lemma}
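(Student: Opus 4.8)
The plan is to exhibit an explicit successful layout using $s+\ell$ searchers, orienting $G$ so that the factor $P_{k+1+r}$ runs horizontally (columns indexed $0,\dots,k+r$) and $P_s$ vertically (rows indexed $0,\dots,s-1$). First I would isolate the ``easy'' slab of the grid. Since $k+1\equiv 0\pmod{k+1}$, the wall sweep used in the proof of Theorem~\ref{upper_bound_Cartesian} (equivalently, Corollary~\ref{cor:0mod}) clears the leftmost $k+1$ columns with a single full-height wall of $s$ searchers that starts in column $0$ and advances one column per stage. The reason to use the explicit wall rather than Lemma~\ref{lemma:partition} as a black box is that it leaves column $k$ — and hence the whole left boundary of the remaining $r$-column band — \emph{protected}, so the residual task is to clear an $r\times s$ block that already has a protected wall along one side. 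Clearing this band with only $\ell$ searchers is the entire content of the lemma: when $\ell<\min\{r,s\}$ it beats the $\min\{r,s\}$ forced by an independent sweep (Corollary~\ref{Cor:unlimited_moves}), and this is exactly what improves on the weaker bound $s+\min\{r,s\}$ of Theorem~\ref{Upper_bound_Cartesian2}.

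For the band itself I would use a ``meeting halfway'' construction, which is what the shape of $\ell$ and the term $2s+r-(k+1)$ suggest: rather than building a second full wall (another $s$ searchers), I would place the $\ell$ extra searchers inside the overflow columns and have them sweep \emph{toward} the main front, exploiting the overlap created by $k+1+r<2(k+1)$. The aim is that each extra searcher eventually sits against a protected column and pairs with a partner to clear a width-two pocket, in the manner of the right-hand $2\times 2$ block of the introductory example: two searchers on one column with the adjacent column protected split cleanly into the two remaining cells. The two quantities defining $\ell$ should then emerge as two independent feasibility constraints on this interleaving — $\lceil r/2\rceil$ counts the searchers needed to cover the $r$ overflow columns once they are grouped into pairs, while $\lceil(2s+r-(k+1))/2\rceil$ is a reachability/scheduling bound ensuring the extra searchers can reach their meeting points and complete their vertical sweeps within the $k$-move budget (here $2s+r-(k+1)$ measures, up to a factor of two, how far the combined sweep must run beyond a single wall traversal). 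Taking $\ell$ to be the maximum forces both to hold, and the hypotheses $\lceil r/2\rceil<s$ and $\ell<r$ keep the layout confined to the overflow band and away from degenerate regimes.

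I would finish by checking, stage by stage, that no searcher makes more than $k$ moves and that the movement rule is never violated — that is, a searcher vacates a vertex only when all of its unprotected neighbours are simultaneously occupied, the no-gap condition behind Lemma~\ref{lem:boundary}. The main obstacle is precisely this verification at the \emph{seam} where the extra searchers meet the incoming front: a lone searcher facing two or more unprotected neighbours is frozen, so the whole construction lives or dies on timing the arrivals so that searchers always reach the still-width-two pockets \emph{in pairs}, using the protected left boundary to cut the effective degree by one. Pinning down the starting columns and rows of the $\ell$ searchers so that this pairing occurs on schedule for every residual pocket, and confirming that the pairing requirement and the move budget are captured exactly by the two terms in $\ell$, is the delicate part; once that is in hand, the remainder is bookkeeping on move counts together with a routine appeal to the boundary analysis of Section~2.
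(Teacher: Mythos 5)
Your construction has a fatal structural flaw, and it is ruled out by the paper's own Lemma~\ref{lem:another_lower}. You propose that a full-height wall of $s$ searchers clears columns $0$ through $k$ on its own (ending immobile at column $k$, since each wall searcher spends all $k$ moves getting there), and that the residual $r \times s$ band of columns $k+1,\dots,k+r$ is then cleared entirely by the $\ell$ extra searchers; you even state that ``clearing this band with only $\ell$ searchers is the entire content of the lemma.'' But under the lemma's hypotheses one always has $\ell < s$ (since $r < k$ gives $\lceil (2s+r-(k+1))/2\rceil < s$, and $\lceil r/2\rceil < s$ is assumed), so together with $\ell < r$ we get $\ell < \min\{r,s\}$. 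The band contains a subgraph $H \cong P_{r'} \Box P_{r'}$ with $r' = \min\{r,s\} \ge 2$, and Lemma~\ref{lem:another_lower} --- whose proof applies to \emph{any} successful layout, not only optimal ones --- forces at least $r'$ distinct searchers to occupy a vertex of $H$ at some stage of the game. In your scheme the wall searchers never enter the band, so at most $\ell < r'$ searchers ever touch $H$, a contradiction. Thus no choice of starting positions, pairing scheme, or seam timing can make your two-crew decomposition succeed; the failure is not in the ``delicate bookkeeping'' you defer, but in the architecture itself. (Concretely, for $k=10$, $r=4$, $s=5$ you would be asking $\ell = 2$ searchers to clear a $4\times 5$ block whose only protected boundary is one side.)

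The paper's proof avoids exactly this trap by refusing to separate the searchers into independent crews. Its $s$ ``main'' searchers (the sets $S$ and $T$) do not sweep straight across: they follow staircase, corner-turning routes of length exactly $k$ that carry them up and then horizontally \emph{into} the last $r$ columns, finishing along the top row at $(k+1-r,s-1),\dots,(k+\ell-r,s-1)$; the $\ell$ searchers of $S'$ start deep inside the far block and sweep to meet them, handling only what the main group cannot reach within its $k$-move budget. The two terms defining $\ell$ encode this meeting condition, not a pairing count plus a scheduling constraint for an autonomous right-hand crew: $\lceil (2s+r-(k+1))/2\rceil$ guarantees the main group protects at least half of the vertices in the top $\ell$ rows, and $\lceil r/2 \rceil$ guarantees $r-\ell \le \ell$, so the $S'$ searchers can turn the corner and finish the remaining $r-\ell$ columns. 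Cooperation between the left sweep and the right block is the essential missing idea in your proposal.
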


\begin{proof}
  Since $r <k$, it follows that $\left \lceil \frac{2s + r - (k+1)}{2} \right\rceil \le  
\left \lceil \frac{2s -2}{2} \right\rceil <s$.  Since $\left \lceil \frac{r}2 \right \rceil< s$, it follows that $\ell < s$.

Begin by placing a searcher on each vertex of $S \cup S' \cup T$, where
    \begin{itemize}
        \item $S = \{(i,0): i=0, \ldots ,\ell -1\}$
        \item $S'= \{(k+1-s + \ell, j): j=s-\ell , \ldots , s - 1\}$
        \item $T = \{(\ell, j):j=0, \ldots , s-\ell-1\}$
    \end{itemize}
  
The initial placement of searchers and the paths they traverse are demonstrated in Figure \ref{secondL}.   Since $\ell < s \le k$, it follows that each of $S$, $S'$, and $T$ is non-empty, and a total of $s + \ell$ searchers are required.  

We see that in the initial stage, only searchers in $S \cup T$ move.  After the $(r-1)^{\mathrm{st}}$ stage, the searchers who started in $S$ occupy $(0,r-1), (1, r-2), \ldots (\ell-1, r-\ell)$, respectively.  This position is highlighted on the diagram as a dashed line extending from the top left corner. 

At stage $r$, only the searcher on $(0,r-1)$ moves. The searchers on $(1, r-2), \ldots (\ell-1, r-\ell)$ move again at stages $r+1$, $r+2$, ..., $r+\ell+1$, respectively.   This maneuvering of the corner is highlighted in the figure by the dashed line in the top left of the diagram. 

We note that the searchers on $T$  move in turn as their neighbours to the left are protected, with the searcher starting at $(\ell, s-\ell-1)$ moving after the last searcher to its left has turned the corner.  

The searchers in the top $\ell$ rows similarly maneuver a ``corner" as they approach the set $S'$.  The searchers who start on $S$ each traverse a path of length $k$ and therefore end on vertices $(k+1 - r, s-1), (k+2 - r, s-1), \ldots (k+\ell -r, s-1)$, respectively. It follows that, $k-s+ \ell +1$ vertices along the top row are protected by searchers that started on $S$.  Since $k-s+ \ell +1 \ge k+1 - s + \frac{2s+ r-(k+1)}{2} = \frac{k+1+r}{2}$, at least  half of the vertices in the top $\ell$ rows are protected by searchers in $S$. 

While the searchers that start in $S$ and $T$ will each traverse a path of length $k$, the searchers that start in $S'$ do not all traverse paths of length $k$, unless $2\ell = r$.  If $2\ell >r$, then the searchers from $S'$ in the top $2\ell -r$ rows will stop once they move onto the final column.   The remaining $r-\ell$ searchers will turn the corner and move down the last $r - \ell$ columns.
\end{proof}
\begin{figure}[ht]
\begin{center} 
\includegraphics[width = .7\linewidth]{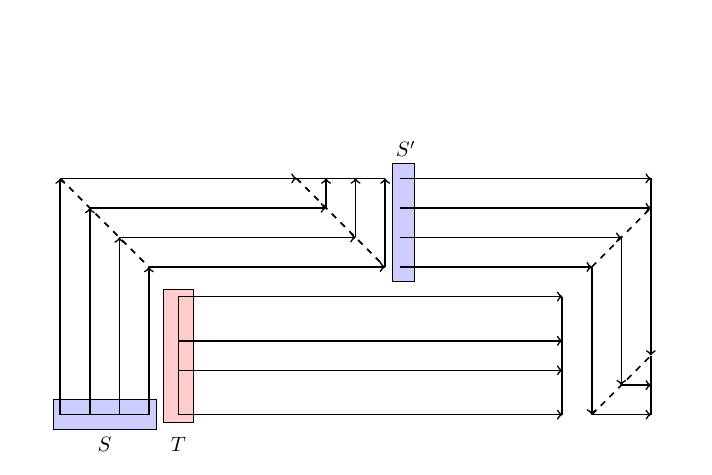}

\end{center}
\caption{An illustration of the searcher movements in the proof of Lemma~\ref{lem:halfway1}}\label{secondL}
\end{figure}
\begin{corollary}\label{cor:upper_bound_3}
    If $2 \le s, r \le k-1$, then $$d_k(P_{k+1+r} \Box P_{s}) \le s + \min\{s, r, \ell\}$$ where $\ell = \max \left\{ \left\lceil \frac{r}{2} \right\rceil ,\left \lceil \frac{2s + r - (k+1)}{2} \right\rceil \right \}$.
\end{corollary}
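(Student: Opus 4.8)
The plan is to obtain the bound $s + \min\{s,r,\ell\}$ by establishing two separate upper bounds and taking the smaller one: a partition-based bound of $s + \min\{s,r\}$ that always holds under the stated hypotheses, together with the sharper bound $s + \ell$ from Lemma~\ref{lem:halfway1}, which applies precisely in the regime where $\ell$ is the smallest of the three quantities.

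First I would establish the partition bound. Writing $P_{k+1+r} \Box P_s = P_s \Box P_{k+1+r}$ and splitting the second factor as $(k+1)+r$, I apply Lemma~\ref{lemma:partition} with $n_1 = r$ and $n_2 = k+1$ (here $n_2 \equiv 0 \pmod{k+1}$ and $k+1+r > k+1$ since $r \ge 2$), obtaining
$$d_k(P_{k+1+r} \Box P_s) \le d_k(P_s \Box P_r) + d_k(P_s \Box P_{k+1}).$$
Since $2 \le s, r \le k-1 < k+1$, Corollary~\ref{Cor:unlimited_moves} gives $d_k(P_s \Box P_r) = \min\{s,r\}$ and $d_k(P_s \Box P_{k+1}) = s$. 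Hence $d_k(P_{k+1+r} \Box P_s) \le s + \min\{s,r\}$.

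Next I would handle the case $\ell < \min\{s,r\}$, in which $\min\{s,r,\ell\} = \ell$ and the partition bound is no longer sharp enough. Here I invoke Lemma~\ref{lem:halfway1}, so the real work is verifying its hypotheses. The condition $2 \le r < k$ is immediate from $2 \le r \le k-1$. For $\ell < r$, note $\ell < \min\{s,r\} \le r$. For $\lceil r/2 \rceil < s$, I use that $\ell \ge \lceil r/2 \rceil$ by the definition of $\ell$, together with $\ell < \min\{s,r\} \le s$, which chains to $\lceil r/2 \rceil \le \ell < s$. With these three conditions checked, Lemma~\ref{lem:halfway1} yields $d_k(P_{k+1+r} \Box P_s) \le s + \ell$.

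Finally I would combine the two bounds. If $\ell \ge \min\{s,r\}$, then $\min\{s,r,\ell\} = \min\{s,r\}$ and the partition bound already gives the claim; if $\ell < \min\{s,r\}$, then $\min\{s,r,\ell\} = \ell$ and the halfway bound gives it. In either case $d_k(P_{k+1+r} \Box P_s) \le s + \min\{s,r,\ell\}$. The only genuine obstacle is the hypothesis check for Lemma~\ref{lem:halfway1}, specifically extracting the strict inequality $\lceil r/2 \rceil < s$ from the case assumption $\ell < \min\{s,r\}$ via $\ell \ge \lceil r/2 \rceil$; this is the bit of bookkeeping that knits the two regimes together cleanly.
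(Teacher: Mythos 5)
Your proposal is correct and takes essentially the same route as the paper: both combine the partition bound $s+\min\{s,r\}$ (you re-derive it from Lemma~\ref{lemma:partition} and Corollary~\ref{Cor:unlimited_moves}, while the paper cites Theorem~\ref{Upper_bound_Cartesian2}, whose proof in this case is exactly that partition argument) with the bound $s+\ell$ from Lemma~\ref{lem:halfway1}. Your case split on $\ell < \min\{s,r\}$ versus $\ell \ge \min\{s,r\}$ is logically equivalent to the paper's split on whether $\lceil r/2\rceil < s$ and $\ell < r$, and your verification of the hypotheses of Lemma~\ref{lem:halfway1} is valid.
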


\begin{proof}
We know from Theorem \ref{Upper_bound_Cartesian2} that $d_k (P_{k+1+r} \Box P_{s}) \le s + \min\{s, r\}$.  

If $\left \lceil \frac{r}2 \right \rceil\ge s$, then $\ell \ge s$  and $\min\{\ell, s, r\} = \min \{s,r\}$.  
Similarly, if $ \ell \ge r$, then $\min\{\ell, s, r\} = \min \{s,r\}$.  Therefore, if $\left \lceil \frac{r}2 \right \rceil\ge s$ or $ \ell \ge r$, then  $d_k (P_{k+1+r} \Box P_{s}) \le s + \min\{s, r, \ell\}$.  

If $\left \lceil \frac{r}2 \right \rceil< s$ and $ \ell < r$, then by Lemma \ref{lem:halfway1}, $d_k(P_{k+1+r}\Box P_s) \le s + \ell$, and hence $d_k (P_{k+1+r} \Box P_{s}) \le s + \min\{s, r, \ell\}$.  
\end{proof}

\bigskip
\begin{corollary}
   Suppose $G= P_{k+1+r} \Box P_{s}$ where $2 \le r \le k-1$  and $s = \left \lceil \frac{k+1}{2} \right \rceil$.  Then $$d_k(G) = s +\left \lceil \frac{sr}{k+1} \right \rceil . $$  
\end{corollary}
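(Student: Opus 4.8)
The plan is to sandwich $d_k(G)$ between matching lower and upper bounds, both of which turn out to be essentially $\lceil mn/(k+1)\rceil$ in disguise; the only real work is an elementary, parity-sensitive ceiling identity.

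First I would dispose of the lower bound, which requires no new ideas. Writing $m = k+1+r$ and $n = s$ for the dimensions of the grid, Lemma~\ref{lem:gen_bound_1} gives $d_k(G) \ge \lceil mn/(k+1)\rceil$. Since $mn = (k+1+r)s = (k+1)s + rs$ and $s$ is an integer, $\lceil mn/(k+1)\rceil = s + \lceil rs/(k+1)\rceil$, so $d_k(G) \ge s + \lceil sr/(k+1)\rceil$ immediately.

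For the upper bound I would apply Corollary~\ref{cor:upper_bound_3}. The first step is to check its hypotheses $2 \le s, r \le k-1$: the bounds on $r$ are assumed, and since $r \ge 2$ forces $k \ge 3$, a short verification shows $2 \le \lceil (k+1)/2\rceil \le k-1$. The corollary then gives $d_k(G) \le s + \min\{s, r, \ell\}$ with $\ell = \max\{\lceil r/2\rceil, \lceil (2s + r - (k+1))/2\rceil\}$, so it remains only to identify $\min\{s, r, \ell\}$ with $\lceil sr/(k+1)\rceil$; matching the lower bound then closes the argument.

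The evaluation of $\ell$ is the crux, and the main (though routine) obstacle is a parity case split. Because $s = \lceil (k+1)/2\rceil$, I have $2s - (k+1) = 0$ when $k$ is odd and $2s - (k+1) = 1$ when $k$ is even, so $\ell = \lceil r/2\rceil$ in the odd case and $\ell = \lceil (r+1)/2\rceil$ in the even case. In both cases $\ell \le r$, and using $r \le k-1$ one gets $\ell \le \lceil (r+1)/2 \rceil \le \lceil (k+1)/2\rceil = s$, so $\min\{s,r,\ell\} = \ell$. Finally I would check $\ell = \lceil sr/(k+1)\rceil$ directly: when $k$ is odd, $sr/(k+1) = r/2$ so the ceiling is $\lceil r/2\rceil = \ell$; when $k$ is even, $sr/(k+1) = r/2 + r/(2(k+1))$ with $0 < r/(2(k+1)) < 1/2$ (since $r \le k-1$), and a brief split on the parity of $r$ shows the ceiling equals $\lceil (r+1)/2\rceil = \ell$. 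Combining with the lower bound yields $d_k(G) = s + \lceil sr/(k+1)\rceil$.
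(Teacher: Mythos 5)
Your proposal is correct and follows essentially the same route as the paper's proof: the lower bound from Lemma~\ref{lem:gen_bound_1}, the upper bound from Corollary~\ref{cor:upper_bound_3}, a parity split on $k+1$ to evaluate $\ell$, and the same ceiling identity $\left\lceil \frac{r}{2}+\frac{r}{2(k+1)}\right\rceil = \left\lceil \frac{r+1}{2}\right\rceil$. Your version is in fact slightly more careful than the paper's, since you explicitly verify the hypotheses $2 \le s \le k-1$ of Corollary~\ref{cor:upper_bound_3} and check that $\ell$ attains the minimum in $\min\{s,r,\ell\}$.
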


\begin{proof}
From Lemma \ref{lem:gen_bound_1}, we have $d_k(G) \ge \left \lceil \frac{(k+1+r)s}{k+1} \right \rceil = s+ \left \lceil \frac{rs}{k+1} \right \rceil$.

Suppose $s = \left \lceil\frac{k+1}{2} \right \rceil$.  If $k+1$  is even, then $s = \frac{k+1}{2}$ and it follows from Corollary \ref{cor:upper_bound_3} that $d_k(G) \le s+ \min\{s, r, \left \lceil \frac r2 \right\rceil \} = s + \left \lceil \frac{r}{2}\right \rceil = s +\frac{sr}{k+1}$.  

If $k+1$ is odd, then $s = \frac{k+2}{2}$ and it  follows from Corollary \ref{cor:upper_bound_3} that $d_k(G) \le  s + \left \lceil \frac{r+1}{2}\right \rceil$.  Furthermore, $\left \lceil \frac{rs}{k+1} \right \rceil= \left \lceil \frac{r}{2} + \frac{r}{2(k+1)}\right \rceil$. Since $0<\frac{r}{2(k+1)}<\frac{1}{2}$,  it follows that $\left \lceil\frac{r}{2} + \frac{r}{2(k+1)}\right \rceil = \left \lceil\frac{r+1}{2} \right \rceil$.   Hence, $d_k(G) \le  s + \left \lceil \frac{rs}{k+1}\right \rceil$.

In either case, $d_k(P_{k+1+r} \Box P_s) = s+ \left \lceil\frac{sr}{k+1}\right \rceil$.     
\end{proof}
\bigskip

\begin{lemma}\label{lem: multi upper bound}
    Suppose $2 \le r, s \le k-1$.  Then $$d_k(P_{k+1+r} \Box P_s) \le s+ \ell + \min \left \{r-\ell, s-\ell, \max \left\{ \left\lceil \frac{r-\ell}{2} \right\rceil ,\left \lceil \frac{2s + r -3\ell - (k+1)}{2} \right\rceil \right \}\right \}$$ where $\max\{1, r+s - (k+1)\} \le \ell \le \min\{r,s\}$.

\end{lemma}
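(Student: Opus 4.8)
The plan is to recognize the right-hand side as a ``peeling'' of the grid and reduce to Corollary~\ref{cor:upper_bound_3}. Writing $r' = r-\ell$ and $s' = s-\ell$, note that $2s'+r'-(k+1) = 2s+r-3\ell-(k+1)$, so the inner maximum is exactly the quantity $\ell'$ that Corollary~\ref{cor:upper_bound_3} associates to the grid $P_{k+1+r'}\Box P_{s'}$, and
\[
s' + \min\left\{r', s', \max\left\{\left\lceil \tfrac{r'}{2}\right\rceil, \left\lceil\tfrac{2s'+r'-(k+1)}{2}\right\rceil\right\}\right\}
\]
is precisely the upper bound that Corollary~\ref{cor:upper_bound_3} gives for $P_{k+1+(r-\ell)}\Box P_{s-\ell}$. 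Since $s+\ell+\min\{\cdots\} = 2\ell + \big(s'+\min\{\cdots\}\big)$, the target inequality is equivalent to
\[
d_k(P_{k+1+r}\Box P_s) \le 2\ell + d_k\big(P_{k+1+(r-\ell)}\Box P_{s-\ell}\big),
\]
with the interior grid bounded by Corollary~\ref{cor:upper_bound_3}. Thus the whole proof reduces to exhibiting a layout that uses $2\ell$ searchers to clear an L-shaped border while an independent copy of the Corollary~\ref{cor:upper_bound_3} layout clears a shrunken interior subgrid isomorphic to $P_{k+1+(r-\ell)}\Box P_{s-\ell}$.

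Concretely, I would place the interior subgrid on the bottom $s-\ell$ rows and $k+1+r-\ell$ of the columns, and reserve the remaining L-shaped region — the top $\ell$ rows (full width $k+1+r$) together with the $\ell$ border columns in the bottom $s-\ell$ rows — for the $2\ell$ border searchers. These execute two corner maneuvers exactly like the sets $S$ and $S'$ in the proof of Lemma~\ref{lem:halfway1}: $\ell$ searchers starting at the bottom of one side sweep up, turn, and run along the top $\ell$ rows, while $\ell$ searchers on the far side perform the mirror-image maneuver, the two sweeps meeting in the top rows. The counting that makes this possible is exactly the hypothesis $\ell \ge r+s-(k+1)$: the border has $\ell(k+1+r)+\ell(s-\ell)$ vertices, and $2\ell$ searchers protect at most $2\ell(k+1)$ vertices, so the border can be cleared precisely when $\ell(k+1+r)+\ell(s-\ell)\le 2\ell(k+1)$, i.e. $\ell \ge r+s-(k+1)$. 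The remaining constraints $\ell\ge 1$ and $\ell \le \min\{r,s\}$ guarantee, respectively, that each corner group is non-empty and that the interior dimensions $r-\ell,\,s-\ell$ are non-negative.

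It then remains to verify two things. First, the interior cover succeeds: when $2\le r-\ell,\,s-\ell\le k-1$ this is immediate from Corollary~\ref{cor:upper_bound_3}, and when $r-\ell$ or $s-\ell$ falls below that range the relevant term of the $\min$ is $r-\ell$ or $s-\ell$ and the subgrid (having at most $k+1$ columns, or few rows) is cleared directly via Corollaries~\ref{Cor:unlimited_moves} and~\ref{cor:0mod}, yielding the same count. Second, the border and interior searchers must not interfere; as in the proof of Lemma~\ref{lemma:partition}, I would take the Corollary~\ref{cor:upper_bound_3} layout in a form whose interface vertices are protected before any border searcher would cross, so that no searcher ever leaves its region, and since no two searchers ever occupy a common vertex the relative timing of the two processes is irrelevant and the union of the two layouts is successful.

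The \emph{main obstacle} is the geometric heart of the middle paragraph: verifying in full that the two $\ell$-searcher corner maneuvers clear the entire L-border. This is the same staircase analysis as in Lemma~\ref{lem:halfway1}, but subtler here because $\ell$ may be as small as $r+s-(k+1)$, well below the value $\lceil r/2\rceil$ used there, so the two outer sweeps must be arranged to meet exactly with no gap, and their interaction with the interior's own corner maneuver (inherited from the Corollary~\ref{cor:upper_bound_3} layout) must be dovetailed cleanly along the shared boundary. Making the meeting point and the interface bookkeeping precise — showing no border vertex is left uncovered and no searcher is ever forced to cross between the two regions — is the crux of the argument.
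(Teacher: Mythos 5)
Your proposal is correct and follows essentially the same route as the paper: the paper likewise peels off the L-shaped border (first $\ell$ columns plus top $\ell$ rows) with $2\ell$ searchers executing the $S$/$S'$ corner maneuvers of Lemma~\ref{lem:halfway1}, invokes $\ell \ge r+s-(k+1)$ for exactly the reason you identify, and clears the interior $P_{k+1+r-\ell}\Box P_{s-\ell}$ with the layouts behind Corollary~\ref{cor:upper_bound_3}, relying on the same non-interference argument as Lemma~\ref{lemma:partition}. The staircase and timing details you flag as the ``main obstacle'' are left at the same level of rigor in the paper itself (a figure plus an appeal to the analysis of Lemma~\ref{lem:halfway1}), so nothing substantive separates the two arguments.
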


\begin{proof}
      
We use a similar technique as in Lemma \ref{lem:halfway1}, by placing one searcher on each vertex of $S \cup S'$ where $S = \{(i,0): i = 0, \ldots,  \ell -1\}$ and $S' = \{(k+\ell -s+1, j): j=s-\ell, \ldots , s-1\}$.  This is demonstrated in Figure \ref{thirdL}. This is a total of $2\ell$ searchers on $S \cup S$  the first $\ell$ columns and the top $\ell$ rows.   We note that the searchers in $S$ will search traverse a path of length $k$.  To guarantee that the searchers in $S'$ can protect the remaining vertices in their row, $\ell \ge s+r - (k+1)$.      

    We note that by removing the first $\ell$ columns and the top $\ell$ rows of $G$, we obtain a subgraph isomorphic to $P_{k+1+r - \ell} \Box P_{s-\ell}$ which, by Lemma \ref{lem:halfway1},  can be protected using $$s'+ \min \left \{r', s', \max \left\{ \left\lceil \frac{r'}{2} \right\rceil ,\left \lceil \frac{2s' + r'  - (k+1)}{2} \right\rceil \right \}\right \}$$ searchers, where $r'=r-\ell$ and $s'=s-\ell$. This subgraph is labeled as $H$ in Figure  \ref{thirdL} and the searchers in $H$ are placed in one of three ways. The first two are demonstrated in Figure \ref{H12} and the third is the layout in Figure \ref{secondL}.  In all cases, the searchers, when placed in similar positions on the subgraph $H$ in $G$, along with $2 \ell$ searchers on $S \cup S'$ move along the same paths as they would if $H$ were a stand-alone graph.  Therefore, the graph is protected with a total of $$2\ell + s'+ \min \left \{r', s', \max \left\{ \left\lceil \frac{r'}{2} \right\rceil ,\left \lceil \frac{2s' + r'  - (k+1)}{2} \right\rceil \right \}\right \}$$ searchers, where $r'=r-\ell$ and $s'=s-\ell$, and the result follows.
\end{proof}

\begin{figure}[ht]
\begin{center} 
\includegraphics[width = .7\linewidth]{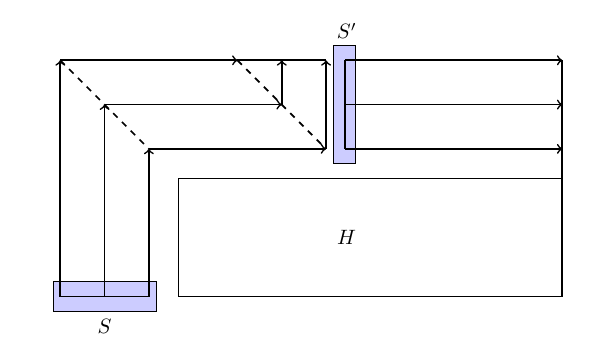}

\end{center}
\caption{An illustration of the searcher movements in the proof of Lemma~\ref{lem: multi upper bound}}\label{thirdL}
\end{figure}

\begin{figure}[h!t]
\begin{center} 
\includegraphics[width = .7\linewidth]{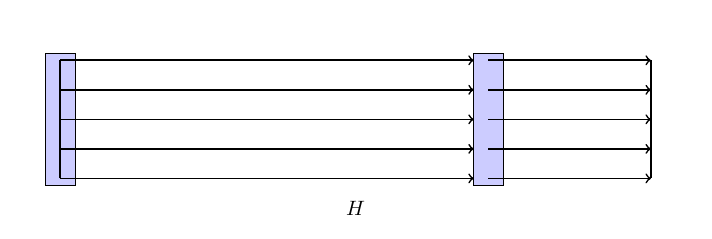}
\includegraphics[width = .7\linewidth]{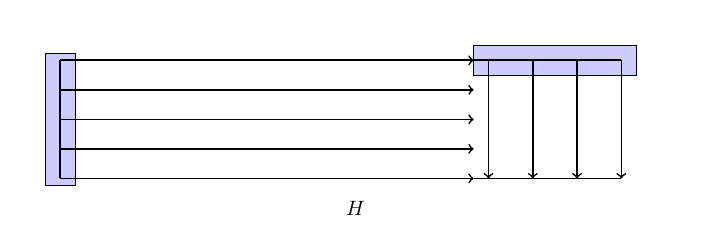}
\end{center}
\caption{Two of the starting configurations for searchers in the subgraph $H$ pictured in Figure~\ref{thirdL}}\label{H12}
\end{figure}

To get an idea as to how well the various upper and lower bounds presented for the Cartesian products of paths actually perform, we consider $P_{k+1+r} \Box P_{s}$ where $k \le 7$ and $0 \le s, r \le k$.

\begin{theorem}
   For all $k$, $r$ and $s$ such that $2 \le k \le 7$, $0 \le r \le k$, and $2 \le s \le k$, the value of $d_k(P_{k+1+r} \Box P_{s})$ can be determined exactly.   Furthermore, when $2 \le k \le 4$, $d_k(P_{k+1+r} \Box P_{s}) = \left \lceil \frac{(k+1+r)s}{k+1} \right \rceil= s + \left  \lceil\frac{rs}{k+1} \right \rceil$. 
\end{theorem}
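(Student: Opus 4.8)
The plan is to treat this as a finite verification: for every admissible triple $(k,r,s)$ we must trap the value of $d_k(P_{k+1+r}\Box P_s)$ between a lower bound and an upper bound that coincide, drawing both from the machinery already developed in this section. Throughout I would orient the grid as $P_s\Box P_{k+1+r}$ with $m=s\le n=k+1+r$, so that $r_1=s\bmod(k+1)=s$ and $r_2=(k+1+r)\bmod(k+1)=r$ (valid since $2\le s\le k$ and $0\le r\le k$). With this convention the universal lower bound of Lemma~\ref{lem:gen_bound_1} reads
\[
d_k(P_{k+1+r}\Box P_s)\ge\left\lceil\tfrac{(k+1+r)s}{k+1}\right\rceil=s+\left\lceil\tfrac{rs}{k+1}\right\rceil,
\]
where $s$ is pulled out of the ceiling because it is an integer; this is exactly the quantity the theorem claims to be sharp when $k\le4$.

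First I would dispose of the boundary triples. When $r=0$, Corollary~\ref{cor:0mod} gives equality, and when $r\in\{1,k\}$ or $s=k$, the corollary following Theorem~\ref{Upper_bound_Cartesian2} gives $d_k=\lceil\frac{(k+1+r)s}{k+1}\rceil$ as well (the case $s=1$ cannot occur since $s\ge2$). This settles every triple except the \emph{interior} ones with $2\le r\le k-1$ and $2\le s\le k-1$, and it already establishes the closed form on the boundary. For the interior cases the strategy is to compute, for the given $(k,r,s)$, the best available upper bound — the minimum over Corollary~\ref{cor:upper_bound_3} (with its $\ell=\max\{\lceil r/2\rceil,\lceil(2s+r-(k+1))/2\rceil\}$) and Lemma~\ref{lem: multi upper bound} optimized over its free parameter in the range $\max\{1,r+s-(k+1)\}\le\ell\le\min\{r,s\}$ — and the best available lower bound, namely the displayed bound together with Corollary~\ref{cor:improve_lower}, which in this orientation yields $d_k\ge s+\min\{r-s+1,s\}$ whenever $r\ge s$. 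The assertion is that these meet in every interior triple with $2\le k\le7$.

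For $k\le4$ this is short: the interior region is contained in $\{2,3\}\times\{2,3\}$, and in each such triple Corollary~\ref{cor:upper_bound_3} already returns $s+\lceil rs/(k+1)\rceil$, matching the universal lower bound, so the value equals $\lceil\frac{(k+1+r)s}{k+1}\rceil$ and the second assertion follows. For $5\le k\le7$ the closed form can genuinely fail, and the two refinements play complementary roles. When $r\ge s$ the naive lower bound can be too small — for $k=7,r=3,s=2$ it gives $3$, whereas Corollary~\ref{cor:improve_lower} raises it to $4$, meeting the upper bound — so there the improved \emph{lower} bound fixes the value. When $r<s$ the single-corner bound of Corollary~\ref{cor:upper_bound_3} can overshoot — for $k=7,r=3,s=5$ it gives $8$ — and one must instead invoke Lemma~\ref{lem: multi upper bound}, where taking $\ell=1$ brings the \emph{upper} bound down to $7$ to meet the lower bound. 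Tabulating every interior triple this way and recording agreement completes the argument.

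The \textbf{main obstacle} is precisely this last tabulation. Corollary~\ref{cor:upper_bound_3} and Lemma~\ref{lem: multi upper bound} do not collapse to one clean formula on the interior region, so for each triple one must optimize $\ell$ in Lemma~\ref{lem: multi upper bound} and then decide whether the remaining gap between the best upper and best lower bound is closed from above (a multi-corner layout) or from below (Corollary~\ref{cor:improve_lower}); no single uniform rule covers all of them once $k\ge5$. Confirming that some admissible choice always closes the gap is what restricts the clean statement to $k\le7$, and is the one place where care is needed — though the work is only finite bookkeeping rather than any new idea.
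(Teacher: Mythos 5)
Your proposal is correct and follows essentially the same route as the paper: dispose of the boundary triples ($r\in\{0,1,k\}$ or $s=k$) via Corollary~\ref{cor:0mod} and the corollary following Theorem~\ref{Upper_bound_Cartesian2}, form the combined lower bound from Lemma~\ref{lem:gen_bound_1} and Corollary~\ref{cor:improve_lower}, and then close the interior cases $2\le r,s\le k-1$ by a finite tabulation checking that the best upper bound from Corollary~\ref{cor:upper_bound_3} and Lemma~\ref{lem: multi upper bound} (optimized over $\ell$) meets that lower bound, with the $k\le 4$ closed form following from the small interior region. Your spot checks (e.g., $k=7$, $r=3$, $s=2$ needing the improved lower bound, and $k=7$, $r=3$, $s=5$ needing Lemma~\ref{lem: multi upper bound} with $\ell=1$) are accurate, and your level of verification matches what the paper itself records.
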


\begin{proof}
Suppose $2 \le k \le 7$, $0 \le r \le k$, and $2 \le s \le k$.    By Corollaries 4.4 and 4.10, we know $d_k(P_{k+1+r}\Box P_{s}) = \left \lceil \frac{(k+1+r)s}{k+1} \right \rceil$ whenever $s =k$ or $r\in \{0, 1, k\}$.

By Lemma \ref{lem:gen_bound_1}, we have $d_k(P_{k+1+r} \Box P_{s}) \ge \left \lceil \frac{(k+1+r)s}{k+1} \right \rceil= s + \left  \lceil\frac{rs}{k+1} \right \rceil$.  In addition, by Corollary \ref{cor:improve_lower}, $d_k(P_{k+1+r} \Box P_{s}) \ge \min \{r-s+1, r\}$ whenever $r \ge s$.   This can be summarized as follows: $$  d_k(P_{k+1+r} \Box P_{s}) \ge  
   \begin{cases}
        \max\left \{s+\left\lceil\frac{rs}{k+1}\right\rceil,  s+\min \{r-s+1,r\} \right \} , & \text{if } r \ge s \\
        s+\left \lceil\frac{rs}{k+1} \right \rceil , & \text{if } r < s.
    \end{cases}$$
We will refer to this as the combined lower bound.

For $2 \le k \le 7$ and $2 \le r,s\le k-1$, we calculated all upper bounds on $d_k(P_{k+1+r} \Box P_{s})$ arising from results in this section (Lemma 4.7, Theorem 4.9, Corollary 4.12, Corollary 4.13 and Lemma 4.14).  In each case, we found an upper bound that was equal to the combined lower bound.   Therefore, in each case, an optimum initial layout can be found using the constructions discussed in this section.

Furthermore, when $2 \le k \le 4$ and $2 \le r,s\le k-1$, it follows that $k = 3$ or $k=4$. In each case, we can verify that $s+\left\lceil\frac{rs}{k+1}\right\rceil =  s+\min \{r-s+1,r\}$ for all $r$ and $s$ such that $2 \le s \le r \le k-1$.  \end{proof}

\begin{corollary}
 Suppose $2 \le k \le 4$.  If  $m \ge 2$ and $n \ge k+1$, then $$ d_k(P_m \Box P_n) = \left \lceil \frac{mn}{k+1} \right \rceil.$$   
\end{corollary}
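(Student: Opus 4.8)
The lower bound is immediate: since $P_m \Box P_n$ has order $mn$, Lemma~\ref{lem:gen_bound_1} gives $d_k(P_m \Box P_n) \ge \lceil mn/(k+1)\rceil$. The entire content is therefore the matching upper bound, and the plan is to build it up from the theorem immediately preceding this corollary, which supplies the exact value $d_k(P_{k+1+r} \Box P_s) = \lceil (k+1+r)s/(k+1)\rceil = s + \lceil rs/(k+1)\rceil$ for all $2 \le k \le 4$, $0 \le r \le k$ and $2 \le s \le k$. This is the only place the restriction $k \le 4$ enters: for $k \ge 5$ the preceding theorem determines the value but not via this clean closed form.

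First I would establish a \emph{thin strip} sub-claim: for $2 \le s \le k$ and any $b \ge k+1$, $d_k(P_s \Box P_b) = \lceil sb/(k+1)\rceil$. Writing $b = (k+1)c + r$ with $c \ge 1$ and $0 \le r \le k$, I peel off $(k+1)(c-1)$ columns using Lemma~\ref{lemma:partition} (legitimate since the peeled width is a multiple of $k+1$), leaving a copy of $P_s \Box P_{k+1+r}$. By Corollary~\ref{cor:0mod} the peeled block contributes exactly $(c-1)s$ searchers, and by the preceding theorem the residual block contributes $s + \lceil rs/(k+1)\rceil$. Summing and using $\lceil sb/(k+1)\rceil = cs + \lceil rs/(k+1)\rceil$ (valid because $cs$ is an integer) gives the upper bound; the lower bound again comes from Lemma~\ref{lem:gen_bound_1}. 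When $c=1$ no peeling is needed and the sub-claim is just the preceding theorem, while the degenerate case $s=1$ reduces to the path $P_b$ and Lemma~\ref{lem:path_cycle_complete}.

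With the sub-claim in hand, the general statement follows by one more peel. By symmetry assume $m \le n$ (one should check that swapping preserves both hypotheses $m \ge 2$ and $n \ge k+1$). If $m \le k$, the sub-claim applies directly with $s = m$ and $b = n$. If instead $m \ge k+1$, write $n = (k+1)q_2 + r_2$ with $q_2 \ge 1$ and $0 \le r_2 \le k$; when $r_2 = 0$ the result is immediate from Corollary~\ref{cor:0mod}, so assume $r_2 \ge 1$. I would split $P_m \Box P_n$ by Lemma~\ref{lemma:partition} into $P_m \Box P_{(k+1)q_2}$ and the thin strip $P_m \Box P_{r_2}$. The full block contributes $mq_2$ searchers by Corollary~\ref{cor:0mod}, and the thin strip (now with small dimension $r_2 \le k$ and large dimension $m \ge k+1$) contributes $\lceil mr_2/(k+1)\rceil$ by the sub-claim. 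Since $mn = (k+1)mq_2 + mr_2$, these sum to exactly $\lceil mn/(k+1)\rceil$.

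The only genuine subtlety, and the step I expect to require the most care, is the bookkeeping that keeps every intermediate bound an exact match for the ceiling rather than the weaker $\min\{r_1,r_2\}$-type corner bound that Theorem~\ref{Upper_bound_Cartesian2} would give. The decomposition above is designed precisely to avoid ever creating an $r_1 \times r_2$ corner: each peel removes a strip that is either a full multiple of $k+1$ (handled optimally by Corollary~\ref{cor:0mod}) or a thin strip of width at most $k$ (handled optimally by the sub-claim, whose savings are already baked into the preceding theorem). I would finish by double-checking the boundary conditions on each invocation of Lemma~\ref{lemma:partition} (the partitioned length must exceed $k+1$, which holds since $n \ge k+2$ and $b \ge 2(k+1)$ whenever a peel is actually performed) and of Corollary~\ref{cor:0mod} (both factors at least $2$).
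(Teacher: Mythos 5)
Your proposal is correct, and it takes the approach the paper itself intends: the corollary is stated in the paper without proof, and your argument assembles it from exactly the machinery the authors develop for this purpose — the lower bound from Lemma~\ref{lem:gen_bound_1}, and the upper bound by peeling $(k+1)$-wide blocks via Lemma~\ref{lemma:partition} and Corollary~\ref{cor:0mod} until only a $P_{k+1+r}\Box P_s$ block remains, which the preceding theorem evaluates as $s+\left\lceil \frac{rs}{k+1}\right\rceil$ when $2\le k\le 4$. Your bookkeeping (the cases $c=1$, $r_2=0$, the degenerate strip $s=1$ handled by Lemma~\ref{lem:path_cycle_complete}, and the transposition symmetry) is exactly the care needed to avoid the weaker $\min\{r_1,r_2\}$ corner term of Theorem~\ref{Upper_bound_Cartesian2}, so there is no gap.
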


\section{Strong Products}

We define the {\em strong product} of graph $G$ and $H$, denoted $G \boxtimes H$, as follows: $V(G \boxtimes H ) = V(G \Box H)$
 and $E(G \boxtimes H ) = E(G \Box H) \cup \{(x,y)(u,v): xu \in E(G) \mbox{ and } yv \in E(H)\}$. 
  In this section, we begin by finding a  lower bound on  $d_k(G \boxtimes H)$ for any connected graphs $G$ and $H$. Determining general non-trivial upper bounds proved very challenging. Such bounds would need to hold for $K_m\boxtimes K_n$ where nearly saturating the graph with searchers would be necessary. However, since this is likely to be very far off for less dense graphs, we would need further restrictions on the graph structure to obtain more useful upper bounds. We leave this as a possible future direction for the interested reader and instead focus on general lower bounds.  In the specific case that $G$ and $H$ are both paths, we give  upper bounds on $d_k(G \boxtimes H)$.

 \subsection{Lower Bounds on $d_k(G\boxtimes H)$}
 
 \begin{theorem} \label{thm:StrongLower}
  Suppose $2 \le m \le n$, and  $G$ and $H$ are connected graphs of orders $m$ and $n$, respectively.  For any $k \ge 1$,  $d_k(G \boxtimes H) \ge \min \{ m+ 2 \delta (G) -1, n+ 2 \delta (H) -1\}$. 
\end{theorem}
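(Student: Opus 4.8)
The plan is to transport the proof of Theorem~\ref{lower_bound_Cartesian} to the strong product, the one essential new feature being that in $G\boxtimes H$ the closed neighbourhood factors as $N[(x,y)]=N_G[x]\times N_H[y]$. The diagonal edges therefore make every interface between a protected and an unprotected copy roughly twice as thick as in the Cartesian case, and this is precisely what upgrades the coefficient of each minimum degree from $1$ to $2$.

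First I would dispose of the capture-time-$1$ case: there $d_k(G\boxtimes H)=d(G\boxtimes H)$, so Theorem~\ref{1-move bounds} applies, and combined with $\delta(G\boxtimes H)=(\delta(G)+1)(\delta(H)+1)-1$ this forces $d(G\boxtimes H)$ above the claimed minimum (the only care is for $m\in\{2,3,4\}$, handled exactly as in the capture-time-$1$ discussion of Theorem~\ref{lower_bound_Cartesian}). So assume capture time $\ge 2$. Fix $V(G)=\{x_1,\dots,x_m\}$, $V(H)=\{y_1,\dots,y_n\}$, and the families ${\cal G}=\{G_j\}$ (the $n$ copies of $G$) and ${\cal H}=\{H_i\}$ (the $m$ copies of $H$), recording that $G_j\cong G$, $H_i\cong H$, that two copies of $H$ interact exactly when their $G$-indices are adjacent, and dually for copies of $G$.

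The argument then splits as in Theorem~\ref{lower_bound_Cartesian}. In Case~1 (a searcher on every copy of $G$, or on every copy of $H$) I take a vertex $v=(x,y)$ vacated at the first stage at which anything moves; such a $v$ exists because the capture time is $\ge 2$. Counting one searcher in $N[v]$ per unprotected neighbour and one per occupied protected neighbour gives $\deg_{G\boxtimes H}(v)=(\deg_G x+1)(\deg_H y+1)-1$ searchers inside $N[v]$, and the copies of $G$ disjoint from $N[v]$ contribute a further $n-\deg_H y-1$; adding these yields $d_k(G\boxtimes H)\ge n-1+\deg_G(x)(\deg_H y+1)\ge n+2\delta(G)-1\ge m+2\delta(G)-1$, using $\deg_H y+1\ge 2$ and $n\ge m$. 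The symmetric ``every copy of $H$'' subcase is analogous. The heart of the proof is Case~2 (some copy of $G$ and some copy of $H$ both unprotected). Taking the last stage $t$ at which copies of each are totally unprotected and arguing, via Lemma~\ref{lem:boundary}, that every protected copy at stage $t$ is only partially protected and so carries a boundary searcher, I focus on the source vertex $v\in H_1$ that pushes a searcher into a totally unprotected $H_2$. Here the factor of $2$ appears: the diagonal edges make all $\deg_H(y)+1$ vertices of $N[v]\cap V(H_2)$ unprotected neighbours of $v$, while the within-copy neighbours of $v$ in $H_1$ contribute another $\deg_H(y)$ searchers (each unprotected such neighbour forces a searcher on $v$, each protected one is an occupied boundary vertex). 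Thus $N[v]\cap V(H_1)$ alone holds $\ge 2\deg_H(y)+1\ge 2\delta(H)+1$ searchers, and the usual bookkeeping over the remaining copies promotes this to the relevant term.

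The step I expect to be the main obstacle is assembling these estimates into the \emph{stated} minimum. The direct Case~2 bookkeeping produces $\min\{\,n+2\delta(G)-1,\ m+2\delta(H)-1\,\}$, pairing each order with the \emph{other} factor's degree, which is in general incomparable to the asserted $\min\{\,m+2\delta(G)-1,\ n+2\delta(H)-1\,\}$. Reaching the stated bound requires using $m\le n$ together with the fact that the favourable subcase and Case~1 already deliver $n+2\delta(G)-1\ge m+2\delta(G)-1$, and then squeezing the unfavourable ``all copies of $H$ protected'' subcase: there one must recover the missing searchers either from copies that are totally (not merely partially) protected at the critical stage, or from the degree bound $d_k\ge\delta(G\boxtimes H)$ of Lemma~\ref{lem:gen_bound_1}. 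Making this reconciliation airtight, so that \emph{both} subcases of Case~2 dominate $\min\{m+2\delta(G)-1,\ n+2\delta(H)-1\}$, is the delicate point, and is exactly where the hypothesis $m\le n$ is genuinely used.
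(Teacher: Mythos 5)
Your Cases 1 and 2 are, step for step, the paper's own proof: the same families ${\cal G}$ and ${\cal H}$, the same vacated vertex $v$ with the count $\deg_{G\boxtimes H}(v)=(\deg_G x+1)(\deg_H y+1)-1$ in Case 1, and in Case 2 the same critical stage $t$, the boundary-occupancy argument from Lemma~\ref{lem:boundary}, the count of at least $2\deg_H(y)+1\ge 2\delta(H)+1$ searchers in $N[v]\cap V(H_1)$ coming from the diagonal edges, and the final tally $|{\cal H}_p|+2\delta(H)+(m-|{\cal H}_p|-1)(\delta(H)+1)\ge m+2\delta(H)-1$. (Your capture-time-$1$ preamble is extra care the paper omits here --- its strong-product proof silently assumes the critical stage exists --- so that addition is harmless and if anything tightens a loose end.)

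The ``delicate point'' in your last paragraph is real, but it is a defect of the printed \emph{statement}, not a gap you need to close. The paper's own proof concludes, in both cases, $d_k(G\boxtimes H)\ge\min\{n+2\delta(G)-1,\ m+2\delta(H)-1\}$ --- exactly the cross pairing you derived, and exactly the pairing with which the Cartesian analogue, Theorem~\ref{lower_bound_Cartesian}, is stated. The theorem as printed, $\min\{m+2\delta(G)-1,\ n+2\delta(H)-1\}$, does not match its own proof and is evidently a transcription slip. Your proposed reconciliation cannot succeed: in the ``all copies of $H$ touched first'' subcase the counting lives entirely inside copies of $H$ and yields only $m+2\delta(H)-1$, while Lemma~\ref{lem:gen_bound_1} adds only $\delta(G\boxtimes H)=(\delta(G)+1)(\delta(H)+1)-1$. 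Take $G=K_m$ and $H=P_n$ with $n$ large: the stated min is $3m-3$, whereas these two quantities are $m+1$ and $2m-1$, both strictly smaller once $m\ge 3$. So no squeezing of the unfavourable subcase (via totally protected copies or the degree bound) recovers the same-factor pairing; the right move is to prove the cross-paired minimum, as your Cases 1 and 2 already do, and flag the statement for correction.
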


\begin{proof}
   Suppose $V(G) = \{x_1, x_2, \ldots, x_m\}$ and $V(H) = \{y_1, \ldots , y_n\}$.  For each $j = 1, \ldots, n$, let $G_j$ be the subgraph of $G \boxtimes H$ induced on $\{(x, y_j): x\in V(G)\}$.  Similarly, for each $i = 1, \ldots, m$, let $H_i$ be the subgraph induced on $\{(x_i, y): y \in V(H)\}$.  Let ${\cal G} = \{G_j: j=1, \ldots, n\}$ and ${\cal H} = \{H_i: i=1, \ldots, m\}$.  

Consider playing the $k$-move deduction game, where $k \ge 2$, on $G \boxtimes H$, with an initial successful layout using $d_k(G \boxtimes H)$ searchers.  We proceed in cases depending on the structure of this layout.

{\it Case 1: In the initial layout, there is at least one searcher on each copy of $H$ in ${\cal H}$, or at least one searcher on each copy of $G$ in ${\cal G}$.}   

Suppose that, in the initial layout, there is at least one searcher on each copy of $H$, and at stage 1, searchers move off vertex $v=(x,y)$.  It follows that in the initial layout there are at least $|N(v)|$ searchers occupying the set $N[v]$.   Furthermore, there are $m-\deg_G (x)-1$  copies of $H$ in $\cal H$ that do not contain any vertex from $N[v]$ and are each occupied by a searcher.
 It follows that there are at least $\deg_{G\boxtimes H}  (v) + (m-\deg_G(x)-1)$ searchers  in $G \boxtimes H$.  
 
 Since $\deg_{G\boxtimes H} (v) = (\deg_G (x) +1)(\deg_H (y) +1)-1 \ge (\delta(G) +1)(\delta(H) +1)- 1$ and 
 $\deg_G(x) \geq \delta(G)$, it follows that \[\deg_{G \boxtimes H}(v) + (m-\deg_G(x)-1) \geq (\delta(G)+1)(\delta(H)+1)-1+m-(\delta(G)+1).\]Therefore, $d_k(G \Box H) \ge \delta (H) (\delta (G) +1) + m-1 \ge 2 \delta (H) +m -1$.

Similarly, if there is at least one searcher on each copy of $G$ in the initial layout, then $d_k(G \boxtimes H) \ge m+ 2\delta (H) -1$.   Hence, in Case 1, we have $d_k(G \boxtimes H) \ge \min \{ n+ 2 \delta (G) -1, m+ 2 \delta (H) -1\}$.

{\it Case 2: In the initial layout, there is some copy of $G$ in ${\cal G}$ and some copy of $H$ in ${\cal H}$ such that neither is occupied by a searcher.}

As in the proof of Theorem \ref{lower_bound_Cartesian}, 
let ${\cal H}_p$ be the subset of ${\cal H}$ consisting of copies of $H$ that are partially or fully protected at stage $t$.  Define ${\cal G}_p$ similarly.   Furthermore, $t$ is chosen such that ${\cal G}_p, {\cal H}_p,{\cal G} - {\cal G}_p$ and ${\cal H} - {\cal H}_p$ are all non-empty, but at stage $t+1$ every $G \in {\cal G}$ and $H \in {\cal H}$ is at least partially protected.  

It follows, as in Theorem \ref{lower_bound_Cartesian} that every $G \in {\cal G}_p$ and every $H \in {\cal H}_p$ is partially protected.

Without loss of generality, suppose $H_1 \in {\cal H}_p$ and $H_2 \in {\cal H} - {\cal H}_p$, and a searcher moves from  a vertex $v$ in $H_1$ to a vertex $w$ in $H_2$ at stage $t+1$.  Without loss of generality, suppose $v = (x_1,y_1)$ {and $w=(x,y_2)$ for some $x \in \{x_1, \ldots, x_m\}$}.  Note that, at stage $t$, every protected vertex in $H_1$ is a boundary vertex since 
$(x_i,y_1)$ is adjacent to $(x_i,y_2)$ for all $i$ and $H_2$ is totally unprotected.  Hence, every protected vertex in $H_1$ must be occupied by a searcher at the end of round $t$.  Therefore, $N[v] \cap V(H_1)$ contains at least $2|N_H [v]|-1 \ge 2\delta (H) +1$ searchers.   

Now, suppose there is another unprotected copy of $H$, say $H_3$.  If $x_1$ is adjacent to $x_3$ in $H$, it follows that an additional $\delta (H) +1$ searchers must appear in $H_1$.    If $x_1$ is not adjacent to $x_3$ in $H$, then searchers move from another vertex $v'$ in some $H' \in {\cal H}_p$ onto $H_3$ and there must be at least  $2\delta (H) +1$ searchers on $H'$.

We therefore have the following at the end of round $t$:
\begin{enumerate}
    \item  each copy of $H$ in ${\cal H}_p$ is occupied by at least one searcher,
    \item for the first unprotected copy of $H$ in $\cal H$, there must be least $2\delta (H)$ additional searchers,
    \item for each subsequent unprotected copy of $H$ in $\cal H$, there must be at least $\delta (H) +1$ additional searchers.
\end{enumerate}  

It follows that there are at least $|{\cal H}_p| + 2\delta(H) + (m-|{\cal H}_p|-1)(\delta (H)+1) \ge m+ 2 \delta (H) -1$ searchers on $G \boxtimes H$.  Hence, $d_k(G \boxtimes H) \ge  m+ 2 \delta (H) -1\ge \min\{ n+ 2 \delta (G) -1, m+ 2 \delta (H) -1\}$.
\end{proof}

\subsection{Strong Product of Paths}
 Label the vertices of the $m \times n$ strong grid using the set $\{(i,j) \mid 0 \le i \le m-1, 0 \le j \le n-1\}$, where $(i,j)$ and $(i',j')$ are adjacent if $|i-i'|\le 1$ and $|j-j'| \le 1$.  Rows $R_j$ and columns $R_i$ are defined as they were for  Cartesian grids.

\begin{theorem}
\label{lemma:strong_product_upper}
Suppose $k\ge 1$ and $3 \le m \le n \le k+1$.  Then $d_{k}(P_m \boxtimes P_{n}) = 
m+1$.
\end{theorem}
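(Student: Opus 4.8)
The plan is to prove the theorem $d_k(P_m \boxtimes P_n) = m+1$ for $3 \le m \le n \le k+1$ by establishing matching upper and lower bounds. For the lower bound, I would apply Theorem~\ref{thm:StrongLower}. Since $P_m$ and $P_n$ are paths with $\delta = 1$ (as $m,n \ge 3$ guarantees both endpoints exist, so the minimum degree is $1$), that result gives $d_k(P_m \boxtimes P_n) \ge \min\{m + 2\cdot 1 - 1, n + 2\cdot 1 - 1\} = \min\{m+1, n+1\} = m+1$ using $m \le n$. This handles the harder direction essentially for free.

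For the upper bound, I would exhibit an explicit successful layout with $m+1$ searchers. The key feature of this regime is that $n \le k+1$, so a single searcher has enough moves ($k \ge n-1$) to sweep an entire row or column of length at most $n$. The natural strategy is to place searchers so that they can sweep across the grid column-by-column (or row-by-row), using the strong-product diagonal adjacencies to let searchers ``fan out'' and cover the next column. I would place one searcher on each of the $m$ vertices of the first column $C_0 = \{(i,0): 0 \le i \le m-1\}$, plus one additional searcher, giving $m+1$ total. The extra searcher is what distinguishes this from $d_k(P_m \Box P_n) = m$ and reflects the need, in the strong product, to cover the corner/diagonal neighbours when the column-front advances: when the $m$ searchers on a column want to advance, each vertex $(i,j)$ has unprotected neighbours $(i-1,j+1),(i,j+1),(i+1,j+1)$ in the next column, so the front of $m$ searchers sees more than $m$ unprotected neighbours at the boundary rows and cannot simply shift rightward without at least one spare searcher to resolve the excess at a corner.

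The main work is to verify that this layout actually clears the grid under the movement rules. I would argue stage by stage that the column front advances one column at a time: at each stage the $m$ searchers on column $C_j$ must move onto all unprotected neighbours in $C_{j+1}$, which they can do precisely because the extra searcher (appropriately positioned, likely at a corner of the advancing front) supplies the one unit of surplus needed to satisfy the ``number of mobile searchers $\ge$ number of unprotected neighbours'' condition at the boundary. Since each searcher moves at most $n-1 \le k$ times as it traverses from column $0$ to column $n-1$, no searcher becomes immobile prematurely, and after $n-1$ stages all columns are protected.

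The hard part will be the careful bookkeeping in the upper bound: at the top and bottom rows, a column vertex $(i,j)$ with $i \in \{0, m-1\}$ has only two forward neighbours while interior vertices have three, so the count of unprotected forward neighbours exactly equals $m+1$ (the $m$ vertices of $C_{j+1}$ plus the need to cover them with the diagonal spillover). I would need to track exactly where the surplus searcher sits at each stage and confirm the movement rule is satisfied at every boundary, rather than just asserting it. A clean way to present this is to describe the invariant maintained after each stage (all of columns $C_0,\ldots,C_j$ protected, with $m+1$ mobile searchers occupying $C_j$ with one doubled vertex) and verify the invariant is preserved, which sidesteps a fully explicit per-searcher trajectory.
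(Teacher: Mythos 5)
Your lower bound is exactly the paper's: Theorem~\ref{thm:StrongLower} with $\delta(P_m)=\delta(P_n)=1$ gives $d_k(P_m\boxtimes P_n)\ge\min\{m+1,n+1\}=m+1$, and your layout ($m$ searchers on a boundary line of the grid plus one extra searcher doubled at a corner; whether you call that line a row or a column is immaterial) is also the paper's layout. The gap is in how you verify that this layout clears the grid. You claim the front advances in lock-step: ``at each stage the $m$ searchers on column $C_j$ must move onto all unprotected neighbours in $C_{j+1}$,'' with the extra searcher supplying ``the one unit of surplus.'' But the movement rule is enforced vertex by vertex, not in aggregate: the searchers on $v$ may move only if the number of mobile searchers \emph{on $v$} is at least the number of unprotected neighbours \emph{of $v$}. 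An interior vertex of the front carries one searcher and has three unprotected neighbours in the next line, so it cannot move, no matter where the spare searcher sits; a simultaneous advance of the whole front would require $2+3(m-2)+2=3m-2$ searchers on it, far more than $m+1$. Consequently your proposed per-stage invariant (after each stage, $C_0,\dots,C_j$ protected and all $m+1$ searchers on $C_j$ with one doubled vertex) is unattainable, and ``after $n-1$ stages all columns are protected'' is likewise false.

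What actually happens---and what the paper's proof describes---is a diagonal cascade. Place the double at $(0,0)$. At stage $1$ only those two searchers can move (they have exactly two unprotected neighbours, $(0,1)$ and $(1,1)$); once these are protected, the searcher on $(1,0)$ is left with the single unprotected neighbour $(2,1)$ and moves there; in general the searcher on $(i,0)$ moves to $(i+1,1)$ in turn, and finally the searchers on $(m-2,0)$ and $(m-1,0)$ both move to $(m-1,1)$. Thus the front shifts up one line only after $m-1$ stages, each searcher having moved exactly once, and the resulting configuration is the mirror image of the initial one with the double at the opposite corner, so the process repeats. Each searcher makes $n-1\le k$ moves in total, which is where the hypothesis $n\le k+1$ enters; note that the number of \emph{stages} is irrelevant, since $k$ bounds moves per searcher, not stages. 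Your argument can be repaired by restating the invariant per sweep of $m-1$ stages rather than per stage, at which point it coincides with the paper's proof.
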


\begin{proof}
The lower bound follows from Theorem~\ref{thm:StrongLower}.  For the upper bound, suppose we play on  $P_m \boxtimes P_{n}$  with $m+1$ searchers where $k \ge 1$ and $3 \le m \le n \le k+1$, and each searcher can move at most $k$ times.  

Place two searchers on  $(0,0)$.  In addition, place a single searcher on each vertex in the set $\{(i, 0) \mid 1\le i \le m-1\}$, respectively. The searchers on $(0,0)$ can move in the first time-step -- one moves to $(0,1)$ while the other moves to $(1,1)$.  Now, for each $i=1, \ldots , {m-3}$ the searcher on $(i,0)$ moves, in turn, to $(i+1,1)$.  Following this, the searchers on {$(m-2,0)$ and} $(m-1,0)$ move to $(m-1,1)$.  (See  Figure~\ref{fig:StrongPath}.)

After one move each, the searchers  now occupy all vertices of the next row with two searchers on the vertex $(m-1,1)$.  Since the layout of searchers is the mirror image of the initial layout shifted up one row, we can see that $m+1$ searchers can subsequently clear up to $k+1$ rows using at most $k$ moves each.
\end{proof}

\begin{center}
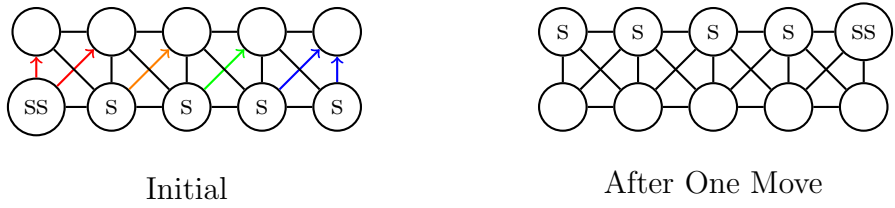
\begin{figure}[ht]
\centering
\begin{tikzpicture}
\begin{scope}[every node/.style={circle,thick,draw}]
    \node (1) at (0,0) {ss};
    \node (2) at (1,0) {s};
    \node (3) at (2,0) [label=below: Initial] {s};   
    \node (4) at (3,0) {s};   
    \node (5) at (4,0) {s};
    \node (6) at (0,1) {\phantom{s}};
    \node (7) at (1,1) {\phantom{s}};
    \node (8) at (2,1) {\phantom{s}};
    \node (9) at (3,1) {\phantom{s}};
    \node (10) at (4,1) {\phantom{s}};
  \end{scope}

\begin{scope}[every path/.style={thick}]
    \path [-] (1) edge node {} (2);
    \path [-] (2) edge node {} (3);
    \path [-] (3) edge node {} (4);
    \path [-] (4) edge node {} (5);
    \path [-] (6) edge node {} (7);
    \path [-] (7) edge node {} (8);
    \path [-] (8) edge node {} (9);
    \path [-] (9) edge node {} (10);
    \path[red] [->] (1) edge node {} (6);
    \path [-] (2) edge node {} (7);
    \path [-] (3) edge node {} (8);    \path [-] (4) edge node {} (9); 
    \path[blue] [->] (5) edge node {} (10);
    \path[red] [->] (1) edge node {} (7);
    \path[orange] [->] (2) edge node {} (8);
    \path[green] [->] (3) edge node {} (9);    
    \path[blue] [->] (4) edge node {} (10); 
    \path [-] (2) edge node {} (6);
    \path [-] (3) edge node {} (7);    \path [-] (4) edge node {} (8);  
    \path [-] (5) edge node {} (9);    
\end{scope}

\begin{scope}[every node/.style={circle,thick,draw}]
    \node (11) at (7,0) {\phantom{s}};
    \node (12) at (8,0) {\phantom{s}};
    \node (13) at (9,0) {\phantom{s}};   
    \node (14) at (10,0) {\phantom{s}};   
    \node (15) at (11,0) {\phantom{s}};
    \node (16) at (7,1) {s};
    \node (17) at (8,1) {s};
    \node (18) at (9,1) {s};
    \node (19) at (10,1) {s};
    \node (20) at (11,1) {ss};
  \end{scope}

\draw (9,-1) node{After One Move};

\begin{scope}[every path/.style={thick}]
    \path [-] (11) edge node {} (12);
    \path [-] (12) edge node {} (13);
    \path [-] (13) edge node {} (14);
    \path [-] (14) edge node {} (15);
    \path [-] (16) edge node {} (17);
    \path [-] (17) edge node {} (18);
    \path [-] (18) edge node {} (19);
    \path [-] (19) edge node {} (20);
    \path [-] (11) edge node {} (16);
    \path [-] (12) edge node {} (17);
    \path [-] (13) edge node {} (18);   \path [-] (14) edge node {} (19); 
    \path [-] (15) edge node {} (20);
    \path [-] (11) edge node {} (17);
    \path [-] (12) edge node {} (18);
    \path [-] (13) edge node {} (19);    
    \path [-] (14) edge node {} (20); 
    \path [-] (12) edge node {} (16);
    \path [-] (13) edge node {} (17);    \path [-] (14) edge node {} (18);  
    \path [-] (15) edge node {} (19);  
\end{scope}
\end{tikzpicture}
\caption{Strong products of paths} \label{fig:StrongPath}
\end{figure}
\end{center}

We note that the initial layout of searchers described above is not successful when $m=2$.

\begin{theorem}\label{lem:smallstrong} Suppose $k \ge 2$. Then 
    $$d_k(P_2 \boxtimes P_n) =   
    \begin{cases}
        3, & \text{if } 2\le n \le 3 \\
        4, & \text{if } 4 \le n \le k+1. 
    \end{cases}$$
   
\end{theorem}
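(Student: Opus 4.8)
The plan is to prove each case by separately bounding $d_k(P_2\boxtimes P_n)$ from above and below, and to treat the short grids ($n\le 3$) and the longer grids ($4\le n\le k+1$) by different mechanisms; the case $n=3$ already signals why $P_2$ behaves differently from the $m\ge 3$ grids of Theorem~\ref{lemma:strong_product_upper}, where the ``$+1$'' sweep construction degenerates. For the upper bounds I would exhibit explicit layouts. When $n=2$ the graph $P_2\boxtimes P_2$ is isomorphic to $K_4$, so $d_k=3$ is immediate from Lemma~\ref{lem:path_cycle_complete}. When $n=3$ I would place one searcher on each of $(0,0)$, $(1,0)$ and $(1,1)$: in the first stage the searchers on $(0,0)$ and $(1,0)$ each have the \emph{single} unprotected neighbour $(0,1)$, since their other neighbours are already occupied, so both move there and stack two searchers on $(0,1)$; as $k\ge 2$, in the second stage this pair has exactly the two unprotected neighbours $(0,2),(1,2)$ and moves onto both, clearing the graph. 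For $4\le n\le k+1$ I would instead place two searchers on each of $(0,0)$ and $(1,0)$ and sweep rightward: I claim inductively that after stage $t$ the columns $0,\dots,t$ are protected and two searchers sit on each of $(0,t)$ and $(1,t)$. Each of these front vertices then has exactly the two unprotected neighbours $(0,t+1),(1,t+1)$, so the pair on $(0,t)$ sends one searcher to each, as does the pair on $(1,t)$, leaving two searchers on each vertex of column $t+1$. After $n-1\le k$ moves the grid is cleared, so $d_k\le 4$.

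For the lower bounds, the case $n\le 3$ is immediate from Theorem~\ref{thm:StrongLower}, which gives $d_k(P_2\boxtimes P_n)\ge \min\{3,\,n+1\}=3$. The substantial work is the bound $d_k(P_2\boxtimes P_n)\ge 4$ for $n\ge 4$, since Theorem~\ref{thm:StrongLower} only yields $3$ here; this is the step I expect to be the main obstacle. I would argue by contradiction from a successful three-searcher layout, driven by two facts: by Lemma~\ref{lem:boundary} every boundary vertex is occupied at each non-final stage, and a searcher all of whose neighbours are protected can never move again, so a searcher that falls strictly behind the frontier is frozen. A clean first step is to examine the last column $c$ to become fully protected, cleared at the final stage $T$: just before $T$ every other column is fully protected and the unprotected set lies inside column $c$. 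If $c$ is interior ($1\le c\le n-2$), then its neighbours in columns $c-1$ and $c+1$ are all protected and all adjacent to the unprotected set, giving at least four boundary vertices (when both vertices of $c$ remain) or five (when only one remains), contradicting three searchers. Hence the last column cleared must be an end column.

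The hard part, where I would spend the most effort, is ruling out even an end column being cleared last when $n\ge 4$, i.e.\ making rigorous the intuition that three searchers can sustain only a single ``doubled front.'' The key local computation is that advancing the frontier across a full front column forces two searchers onto a single front vertex, because each candidate pushing vertex has two unprotected neighbours (its own-row and its diagonal neighbour in the next column); pushing such a doubled vertex forward produces two singletons in the new front column, and regenerating a doubled vertex requires \emph{simultaneously} a spare flanking protected vertex and a spare mobile searcher. The two ends of the grid supply exactly this combination (which is precisely what makes the $n=3$ layout succeed), but the interior does not, and the freezing principle prevents the third searcher from being resupplied once it lags behind. The delicate point is to convert this into a contradiction valid for arbitrary protected-region shapes rather than only clean column-prefixes: I would track the frontier stage by stage and show that with three searchers the protected region either exposes an interior column flanked by two protected columns (triggering the boundary count of Lemma~\ref{lem:boundary} and a fourth searcher) or else advances as a single doubled front that must stall in the interior whenever $n\ge 4$, leaving an uncleared column and contradicting success. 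Combining the stall with the end-column reduction above completes the bound $d_k\ge 4$ and hence the theorem.
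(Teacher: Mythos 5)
Your upper bounds are correct and essentially identical to the paper's (the $n=3$ layout on $(0,0),(1,0),(1,1)$ is a mirror image of the paper's $(0,0),(1,0),(0,1)$, and the doubled-pair sweep for $4\le n\le k+1$ is exactly the paper's construction), and the lower bound of $3$ via Theorem~\ref{thm:StrongLower} is fine. The genuine gap is the crux of the theorem: the bound $d_k(P_2\boxtimes P_n)\ge 4$ for $n\ge 4$ is never actually proved. Your text openly defers it --- ``the hard part, where I would spend the most effort,'' ``I would track the frontier stage by stage and show\dots'' --- and what is sketched (doubled fronts, resupply, freezing) is intuition about why three searchers should stall, not an argument that they must, valid for arbitrary reachable protected-region shapes. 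Even your ``clean first step'' contains an unjustified claim: you assert that just before the final stage the unprotected set lies inside a single column (``the last column $c$ to become fully protected''), but nothing rules out the final stage completing several columns at once, so the reduction to end columns is itself incomplete (it is fixable by counting protected neighbours of the whole pre-final unprotected set $U$ rather than of a column, but as written it does not stand).

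The missing idea that makes this case easy --- and which the paper uses --- is that the deduction process is \emph{forced}: from a given initial layout, the evolution of the protected set is completely determined, so the lower bound reduces to a finite check of initial layouts rather than an analysis of all frontier shapes. Concretely, with three searchers on distinct vertices, some occupied vertex must have at most one unprotected neighbour for any motion to occur at all; since $\delta(P_2\boxtimes P_n)=3$ for $n\ge 3$, this forces a degree-$3$ corner and two of its neighbours to be occupied. With a stacked pair, the stack must sit on a corner (a degree-$5$ vertex can never shed two or three searchers with at most one other occupied vertex), and the third searcher must be on a neighbour of that corner or stacked there too. Up to symmetry this leaves only a handful of layouts, and simulating each one for two or three (deterministic) stages shows the searchers freeze with column $3$ still unprotected whenever $n\ge 4$. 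This finite enumeration is the entire lower bound in the paper; your proposed frontier-tracking machinery is both harder than necessary and, as submitted, not carried out, so the proof as it stands is incomplete.
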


\begin{proof}
   By Lemma~\ref{lem:gen_bound_1}, we have $d_k(P_2 \boxtimes P_n) \ge 3$ for  all $n \ge 3$.
    Since $P_2 \boxtimes P_2 \cong K_4$, by Lemma~\ref{lem:path_cycle_complete}, $d_k(P_2 \boxtimes P_2) = 3$.

    For $P_2 \boxtimes P_n$, where $n \ge 3$, consider an initial layout with three searchers where each searcher occupies a unique vertex. {Note that in any such layout, a vertex of degree $3$ and two of its neighbours must be occupied; otherwise, no searchers can move.} Without loss of generality, assume there is a searcher on each of $(0,0)$, $(1,0)$, and $(0,1)$.  In round 1,  the first two searchers move to $(1,1)$ while the third searchers stays on $(0,1)$.  Then in round 2, the searchers on $(1,1)$ move onto $(0,2)$ and $(1,2)$.  If $n=3$, then three searchers have cleared all vertices.  However, if $n \ge 4$, no searcher can move in round 3.  Therefore, $d_k(P_2 \boxtimes P_3) = 3$ and there is no successful initial layout using three searchers on three distinct vertices of $P_2 \boxtimes P_n$ when $n \ge 4$. 

    Now consider an initial layout of three searchers on $P_2 \boxtimes P_n$, $n \ge 4$, where at least two searchers share a vertex.  It follows that, without loss of generality, there are either three searchers on $(0,0)$, or two searchers on $(0,0)$ and one on {either $(0,1)$ or} $(1,0)$.  In either case, after the first round, there will be one searcher on each of $(1,0)$, $(0,1)$, and $(1,1)$.  This means that no searcher can move in round 2.   It follows that there is no successful initial layout on $P_2 \boxtimes P_n$, where $n \ge 4$, using three searchers.

    Finally, consider an initial layout using four searchers where two searchers occupy $(0,0)$ and two searchers occupy $(1,0)$.   All searchers move in round 1, and the round concludes with two searchers on $(0,1)$ and two searchers on $(1,1)$; {see Figure~\ref{fig:StrongPath2}}.  We see, inductively, that for each $t = 1, \ldots, n-1$, two searchers occupy $(0,t)$ and two searchers occupy $(1,t)$ at the end of round $t$.  Hence, this is a successful initial layout.   
\end{proof}

\begin{center}
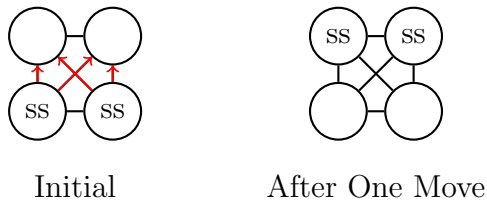
\begin{figure}[ht]
\centering
\begin{tikzpicture}
\begin{scope}[every node/.style={circle,thick,draw}]
    \node (1) at (0,0) {ss};
    \node (2) at (1,0)  {ss};   
    \node (3) at (0,1) {\phantom{ss}};
    \node (4) at (1,1) {\phantom{ss}};
    \node (5) at (4,0) {\phantom{ss}};
    \node (6) at (5,0) {\phantom{ss}}; 
    \node (7) at (4,1) {ss};
    \node (8) at (5,1) {ss};
  \end{scope}

  \draw (0.5,-1) node{Initial};
  \draw (4.5,-1) node{After One Move};

\begin{scope}[every path/.style={thick}]
    \path [-] (1) edge node {} (2);
    \path [-] (1) edge node {} (3);
    \path [-] (1) edge node {} (4);
    \path [-] (2) edge node {} (3);
    \path [-] (2) edge node {} (4);
   \path [-] (3) edge node {} (4);
    \path[red] [->] (1) edge node {} (3);
    \path[red] [->] (1) edge node {} (4);
     \path[red] [->] (2) edge node {} (3);
    \path[red] [->] (2) edge node {} (4);
      
     \path [-] (5) edge node {} (6);
    \path [-] (5) edge node {} (7);
    \path [-] (5) edge node {} (8);
    \path [-] (6) edge node {} (7);
    \path [-] (6) edge node {} (8);
    \path [-] (7) edge node {} (8);
   
\end{scope}

\end{tikzpicture}
\caption{The initial layout and first move for $P_2 \boxtimes P_n$}
\label{fig:StrongPath2}
\end{figure}
\end{center}

\begin{lemma} \label{lem:StrongUpperBound1}
   Suppose $2 \le m \le n$ and $n > k+1$.  Then 
   $$d_k(P_m \boxtimes P_n) \le
   \begin{cases}
      4\left \lceil \frac{n}{k+1} \right \rceil &\text{ if } m = 2 \\ \\
     (m+1) \left \lceil \frac{n}{k+1} \right \rceil &\text{ if }   m \ge 3. 
   \end{cases}$$

\end{lemma}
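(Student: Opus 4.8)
The plan is to partition the grid into horizontal bands of at most $k+1$ rows each and to clear them one band at a time, from the bottom upward, reusing inside each band the single-band sweeping layouts of Theorems~\ref{lemma:strong_product_upper} and~\ref{lem:smallstrong}. Set $p=\left\lceil \frac{n}{k+1}\right\rceil$ and split $\{0,\dots,n-1\}$ into consecutive intervals $J_1,\dots,J_p$, where $J_1,\dots,J_{p-1}$ each have exactly $k+1$ elements and the topmost interval $J_p$ holds the leftover $r=n-(p-1)(k+1)\le k+1$ elements. Let $B_t$ be the subgrid induced on $\{(i,j):0\le i\le m-1,\ j\in J_t\}$, so $B_t\cong P_m\boxtimes P_{|J_t|}$. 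Since $n>k+1$ we have $p\ge 2$.

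Inside each band I would install the layout used for a stand-alone $P_m\boxtimes P_{|J_t|}$ that sweeps across its rows. For $m\ge 3$, place one searcher on each vertex of the band's bottom row plus a second searcher on one corner, as in the proof of Theorem~\ref{lemma:strong_product_upper}; for $m=2$, place two searchers on each of the two bottom-row vertices, as in the proof of Theorem~\ref{lem:smallstrong}. Because each band spans all $m$ columns, its bottom row has $m$ vertices, so the cost is $m+1$ (resp.\ $4$) searchers per band and $(m+1)p$ (resp.\ $4p$) overall, matching the two cases of the claim. The sweep advances searchers across the $|J_t|\le k+1$ rows of the band, so each searcher moves at most $|J_t|-1\le k$ times; note this works even when $|J_t|<m$, since the sweep direction has length $\le k+1$ irrespective of the row length $m$.

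The crux is that the bands do not interfere and clear sequentially. The key observation is that in the strong product a searcher on the bottom row of an interior band $B_t$ ($t\ge 2$) sees unprotected neighbours both above (in $B_t$) and below (the still-unprotected top row of $B_{t-1}$); counting both, the number of unprotected neighbours strictly exceeds the number of searchers on its vertex, so by the movement rule no searcher in $B_t$ can move while $B_{t-1}$ is unprotected. Hence $B_1$, whose bottom row is the grid boundary and so has no neighbours below, is the only band that can begin, and it sweeps upward exactly as in the stand-alone case. I would then argue inductively: once $B_{t-1}$ is completely cleared its searchers occupy its top row, and since $|J_{t-1}|=k+1$ they have each moved $k$ times and are immobile, so they can neither fall back into $B_{t-2}$ nor advance into $B_t$. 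With $B_{t-1}$'s top row now protected, the corner of $B_t$'s bottom row sees only its two unprotected up-neighbours, matching its two searchers, so $B_t$ fires and its sweep proceeds precisely as in the stand-alone analysis (no vertex two rows away is adjacent, so $B_{t+1}$ never affects the interior transitions of $B_t$). The top band $B_p$ finishes the grid: its searchers reach the boundary row $R_{n-1}$, where they have no unprotected neighbour above and simply stop, so it is harmless that they may still be mobile when $r<k+1$.

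The main obstacle I anticipate is exactly this interference analysis. In the strong product the diagonal adjacencies defeat the naive ``occupy a separating row and sweep'' argument of the Cartesian Lemma~\ref{lemma:partition}: one cannot block crossing merely by filling one row. Instead the argument must exploit that the richer neighbourhood \emph{overloads} each boundary searcher's movement condition, freezing a band until the one below it is finished, and that full-width bands become immobile precisely upon reaching their top row. The remaining checks---that each single-band sweep behaves identically inside $B_t$ as in the stand-alone graph, and that placing the unique short band at the top keeps every full-width band immobile exactly at its top row---are routine verifications of the movement rule on the corner and interior vertices of a band's bottom row, which I would carry out case-by-case.
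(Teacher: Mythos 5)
Your proposal is correct and follows essentially the same route as the paper's proof: partition the grid into horizontal bands of at most $k+1$ rows, install in each band the sweep layouts of Theorems~\ref{lemma:strong_product_upper} and~\ref{lem:smallstrong} ($m+1$ searchers per band for $m\ge 3$, four for $m=2$), and use the extra strong-product adjacencies to argue that each band is frozen until the band below it clears, so that every band sweeps as in the stand-alone graph; the paper differs only cosmetically (it places the short band at the bottom, on rows $R_0$ and $R_j$ with $j \equiv n \bmod{(k+1)}$, rather than at the top) and compresses the timing analysis into ``straightforward to confirm.'' One small caveat: your strict-sequencing claim is not literally exact --- when the final cascade of $B_{t-1}$ and the bottom cascade of $B_t$ run in the same direction, the corner pile of $B_t$ fires one stage before $B_{t-1}$ is completely cleared --- but this one-stage overlap delays rather than reroutes searchers and preserves the simultaneous arrival of the two searchers at each row's far corner, so your layout and conclusion stand.
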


\begin{proof}
Suppose $m \ge 3$.   We can place $m+1$ searchers on row $R_0$ of $P_m \boxtimes P_n$, as well as on each of the rows $R_j$ for all $j>0$ such that $j \equiv s \pmod{k+1}$.  For each of these rows, the searchers are placed exactly as they are in Figure \ref{fig:StrongPath}.  
    
    Note that, unlike with the graph $P_m \Box P_n$, there are issues of timing to address in $P_m \boxtimes P_n$.  Specifically, we need to verify that searchers moving onto the same vertex in a stand-alone copy of $P_m \boxtimes P_{k+1}$ will move simultaneously $P_m \boxtimes P_n$.   This is straightforward to confirm.   Hence,  $$d_k(P_m \boxtimes P_n) \le \left \lceil \frac{n}{k+1} \right \rceil (m+1).$$

If $m = 2$, then place two searchers on each vertex of row $R_0$.  Similarly place four searchers on  rows $R_j$ for all $j>0$ such that $j \equiv s \pmod{k+1}$.   Again, it is straightforward to confirm that the this is a successful initial layout, and the result follows. 
\end{proof}

\begin{lemma}\label{lemma:partitionstrong}
   Suppose $2 \le m \le n$,  $n > k+1$,  $n_1+ n_2 = n$ and $n_2 \equiv 0 \bmod{(k+1)}$.  If there is a successful initial layout on $P_{m} \boxtimes P_{n_1}$ using $\ell$ searchers such that the initial layout includes all vertices in either the top or the bottom row of $P_{m} \boxtimes P_{n_1}$ then $$d_k(P_m \boxtimes P_n) \le     \begin{cases}
       \ell +\frac{4n_2}{k+1}  &\text{ if } m = 2 \\ \\
       \ell + \frac{(m+1)n_2}{k+1} &\text{ if }   m \ge 3. 
   \end{cases} $$

   If $m \ge 3$ and there is a successful initial layout on $P_{m} \boxtimes P_{n_1}$ using $\ell$ searchers such that there is a corner of $P_{s} \boxtimes P_m$ that isn't protected until the last  stage of the game, then $$d_k(P_m \boxtimes P_n) \le \ell +  \frac{n_2(m+1)}{k+1}  .$$

\end{lemma}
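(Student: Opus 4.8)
The plan is to adapt the partition argument of Lemma~\ref{lemma:partition} to the strong grid. In both statements I would split $P_m\boxtimes P_n$ along a row into $H_1\cong P_m\boxtimes P_{n_1}$ and $H_2\cong P_m\boxtimes P_{n_2}$, place the two prescribed layouts side by side, and show the combined layout is successful. For $H_2$ I would invoke the construction behind Lemma~\ref{lem:StrongUpperBound1}: since $n_2\equiv 0\pmod{k+1}$, placing searchers on source rows spaced $k+1$ apart (with the first source row chosen to be the row of $H_2$ adjacent to $H_1$) clears $H_2$ using $\tfrac{(m+1)n_2}{k+1}$ searchers when $m\ge 3$ and $\tfrac{4n_2}{k+1}$ searchers when $m=2$; crucially, the source row adjacent to $H_1$ is fully occupied in the initial layout. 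Adding the $\ell$ searchers of the $H_1$ layout gives exactly the claimed totals, so the whole task is to verify that the two sub-games do not interfere.

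For the first statement I would first apply the vertical reflection $(i,j)\mapsto(i,n_1-1-j)$, an automorphism of $P_m\boxtimes P_{n_1}$ that preserves successfulness and searcher count, to assume the $H_1$ layout occupies all of the top row $R_{n_1-1}$ of $H_1$. I would then attach $H_2$ directly above $H_1$, so that at the boundary both $R_{n_1-1}$ (top of $H_1$, occupied by hypothesis) and $R_{n_1}$ (the adjacent source row of $H_2$) are fully occupied from the outset. The key observation is that, because searchers only ever move to \emph{unprotected} neighbours and both boundary rows remain protected throughout, no searcher can cross from $H_1$ into $H_2$ or vice versa; moreover, for every searcher the number of unprotected neighbours in $P_m\boxtimes P_n$ agrees with the number it sees in its own stand-alone subgraph, since the extra cross-boundary neighbours are all protected and hence uncounted. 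Thus each searcher makes exactly the same decision, at exactly the same stage, as in the corresponding stand-alone game on $H_1$ or $H_2$; as both of those are successful, the combined game clears $P_m\boxtimes P_n$, giving the stated bound.

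For the second statement the source row of $H_2$ is again placed adjacent to $H_1$ and fully occupied, so $H_1$ cannot escape upward, but the top row of $H_1$ is now cleared only gradually by the $H_1$ sweep, with a single corner vertex left unprotected until the final stage. The intention is that the searchers of $H_2$ should not begin their upward sweep until $H_1$ is essentially cleared, after which they replicate the stand-alone clearing of $H_2$. The main obstacle, and the step I expect to require the most care, is the synchronisation peculiar to the strong product flagged before Lemma~\ref{lem:StrongUpperBound1}: the $H_2$ construction needs the two searchers that merge on a common vertex to move in the same stage, whereas a partially protected top row of $H_1$ could let some source-row searchers of $H_2$ advance while others are still pinned by an unprotected vertex directly below them. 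Resolving this amounts to arranging the corner to lie in the column where the $H_2$ sweep performs its merge, so that the release of the source row is timed precisely by the order in which the top row of $H_1$ is protected; I would then check, stage by stage, that no source-row searcher is ever forced to move (in particular never downward into $H_1$) before the corner is cleared, and that at that moment the entire source row is released together, recovering the stand-alone timing of Lemma~\ref{lem:StrongUpperBound1} and hence the bound $\ell+\tfrac{n_2(m+1)}{k+1}$.
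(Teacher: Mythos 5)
Your proof of the first statement is essentially the paper's: reflect so that the occupied row of $P_m \boxtimes P_{n_1}$ abuts the partition line, use the construction of Lemma~\ref{lem:StrongUpperBound1}, whose source row is fully occupied, and observe that since both rows along the cut are protected from the outset, every searcher's count of unprotected neighbours agrees with its count in the corresponding stand-alone game, so the two games run independently. That part is correct.

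The second statement is where there is a genuine gap. You propose to align the last-protected corner of $P_m \boxtimes P_{n_1}$ with \emph{the column where the $H_2$ sweep performs its merge}, i.e.\ column $m-1$, opposite the doubled-up pair at column $0$, and then to verify that no source-row searcher moves before the corner is cleared and that at that moment the entire source row is released together. That invariant is false for your alignment: the pair at $(0,n_1)$ is pinned only by its own neighbours below, namely $(0,n_1-1)$ and $(1,n_1-1)$, which lie at the far end from your corner and will in general be protected well before the final stage of the $H_1$ game; as soon as both are protected, the pair has exactly two unprotected neighbours (both in row $R_{n_1+1}$) and is forced to move, launching the $H_2$ wave while $H_1$ is still running. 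The paper instead places the corner \emph{below the pair}, i.e.\ at $(0,n_1-1)$, the column where the sweep \emph{starts}. Then the invariant you want is immediate: the pair is pinned by the corner itself (two searchers but at least three unprotected neighbours), and every other source-row searcher has at least two unprotected neighbours in row $R_{n_1+1}$, which can only be cleared by a wave that the pinned pair has not yet launched; a short induction shows no searcher of $H_2$ moves until the corner is protected, i.e.\ until all of $H_1$ is protected, after which $H_2$ plays out exactly as in the stand-alone game. (Your alignment can in fact be rescued --- the wave that starts early still proceeds in column order, never moves downward, and the last two columns, both adjacent to your corner, are held back and then released simultaneously --- but this requires a considerably more delicate stage-by-stage analysis than the one you describe, and the specific checks you propose would fail at the first stage where $(0,n_1-1)$ and $(1,n_1-1)$ are both protected.)
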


\begin{proof}  Let $G = P_m \boxtimes P_n$.  
As in the proof of Lemma \ref{lemma:partition} we can partition the strong grid $G$ into subgraphs $P_m \boxtimes P_{n_1}$ and $P_m \boxtimes P_{n_2}$ where $n_2 \equiv 0 \bmod{(k+1)}$. 

Following from Lemma \ref{lem:StrongUpperBound1}, there is a successful initial layout on 
 $P_m \boxtimes P_{n_2}$ such that every vertex in its bottom row is occupied by a searcher.  Using this same layout on the top $n_2$ rows of $G$ means that searchers in the bottom $n_1$ rows will move exactly as they would in a  stand-alone graph $P_m \boxtimes P_{n_1}$ . However, protecting vertices the first $n_1$ rows can affect movement of the searchers in the rest of the graph due to timing issues.  Therefore, we need to be careful as to how searchers are placed in $P_m \boxtimes P_{n_1}$.

{First}, suppose there is an initial successful layout using $\ell$ searchers in $P_m \boxtimes P_{n_1}$ so that every vertex in the top row of $P_m \boxtimes P_{n_1}$ is occupied by a searcher.  By applying this same layout to the subgraph induced on rows $R_0$ through $R_{n_1-1}$,  together with the layout on the top $n_2$ rows described in the previous paragraph, the result is an initial successful layout in $G$.    

{Now} suppose there is an initial successful layout using $\ell$ searchers in $P_m \boxtimes P_{n_1}$ so the vertex $(0,n_1-1)$ isn't protected until the last  stage of the game.  This, together with the layout on the top $n_2$ rows described above, is an initial successful layout in $G$ since no searcher in row $R_{n_1}$ in $G$ moves until vertex $(0,n_1-1)$ is protected, or equivalently, until after every vertex in rows $R_0$ through $R_{n_1-1}$   is protected.
\end{proof}

\bigskip

\begin{theorem} \label{upper_bound_stronggrid}
   Suppose $k \ge 3$, $3 \le m \le n$,  and $k+1 < n$. Let $r= m \bmod {(k+1)}$ and $s = n \bmod {(k+1)}$.  Then $$d_k(P_m \boxtimes P_n) \le \begin{cases}  
(m+1)\frac{n}{k+1}  &\text{ if } s=0 \\
   (m+1)\left\lfloor \frac{n}{k+1} \right\rfloor +  \left\lceil \frac{m}{k+1}\right \rceil  & \text{ if } s = 1 \\
   (m+1)\left\lfloor \frac{n}{k+1} \right\rfloor +  4\left\lfloor \frac{m}{k+1}\right \rfloor +d_k(P_r \boxtimes P_s) & \text{ if } s= 2 \\

 (m+1)\left\lfloor \frac{n}{k+1} \right\rfloor + (s+1) \left\lfloor \frac{m}{k+1}\right \rfloor + d_k(P_r \boxtimes P_s) & \text{ if } s \ge 3 \\
   \end{cases}$$   

   where $d_k(P_0 \boxtimes P_s) = 0$.
\end{theorem}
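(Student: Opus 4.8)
The plan is to prove each of the four cases by explicitly decomposing the strong grid $P_m \boxtimes P_n$ into horizontal strips of $k+1$ consecutive rows, handling all but the last strip using Lemma~\ref{lem:StrongUpperBound1}, and then carefully analyzing the leftover strip of height $s$. The key conceptual tool is Lemma~\ref{lemma:partitionstrong}, which lets me ``stack'' a successful layout on a short subgrid $P_m \boxtimes P_{n_1}$ on top of the layout for a full-height block $P_m \boxtimes P_{n_2}$ with $n_2 \equiv 0 \pmod{k+1}$, provided the short layout protects either its top or bottom row last (so that the timing between strips works out). I would set $n_1 = s + k+1$ when $s \geq 1$ (so $n_1 \equiv s$ and $n_2 = n - n_1 \equiv 0$), reducing the problem in each case to finding a good layout on the strip $P_m \boxtimes P_{s + (k+1)}$, and then invoke Lemma~\ref{lemma:partitionstrong} to append $\frac{n_2}{k+1}$ full blocks each costing $(m+1)$ (or $4$ when $m=2$, but here $m \geq 3$) searchers.

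\textbf{The case structure} would proceed as follows. For $s = 0$, the bound is immediate from Lemma~\ref{lem:StrongUpperBound1} since $\lceil n/(k+1)\rceil = n/(k+1)$. For $s = 1$, I would observe that a single extra row of height $1$ must be cleared; since a row is a copy of $P_m$ and clearing it with searchers moving along the row direction costs $\lceil m/(k+1)\rceil$ searchers (by the path result, Lemma~\ref{lem:path_cycle_complete}), I'd place those searchers so the relevant boundary row is protected last, then apply Lemma~\ref{lemma:partitionstrong} to the remaining $(m+1)\lfloor n/(k+1)\rfloor$ searchers. For $s \geq 2$, the leftover is a genuine $P_m \boxtimes P_s$ strip sitting atop the full blocks; here I would further decompose \emph{horizontally} along columns into blocks of width $k+1$, clearing $\lfloor m/(k+1)\rfloor$ column-blocks (each a copy of $P_{k+1}\boxtimes P_s$, costing $(s+1)$ searchers by Theorem~\ref{lemma:strong_product_upper} applied with the roles of $m,n$ swapped, or $4$ when $s=2$) and leaving a corner block $P_r \boxtimes P_s$ that accounts for the residual $d_k(P_r \boxtimes P_s)$ term. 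The $s=2$ case is split off because $P_{k+1}\boxtimes P_2$ requires $4$ rather than $s+1=3$ searchers, matching the anomaly already seen in Theorem~\ref{lem:smallstrong}.

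\textbf{The main obstacle} I anticipate is the timing and orientation bookkeeping required to legitimately apply Lemma~\ref{lemma:partitionstrong} at each nested decomposition. The lemma demands that the short subgrid's layout protect a full boundary row (or a specified corner) only at the final stage, and in the strong product the simultaneity of searchers arriving at a shared vertex is delicate — as the proof of Lemma~\ref{lem:StrongUpperBound1} already flags. I would need to verify that the layouts produced by Theorem~\ref{lemma:strong_product_upper} and Theorem~\ref{lem:smallstrong} can be oriented so that the shared boundary between two stacked blocks is cleared last, and that when I compose a horizontal decomposition with a vertical one (for $s \geq 2$), the two families of searchers never interfere or require contradictory timing. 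Establishing that one of the two hypotheses of Lemma~\ref{lemma:partitionstrong} (top/bottom row protected last, or a corner protected last) always holds for the building-block layouts, and propagating this invariant through the recursion down to the $d_k(P_r \boxtimes P_s)$ residual term, is where the real care lies; the arithmetic of collecting the counts $(m+1)\lfloor n/(k+1)\rfloor + (s+1)\lfloor m/(k+1)\rfloor + d_k(P_r\boxtimes P_s)$ is then routine.
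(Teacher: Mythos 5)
Your proposal follows essentially the same route as the paper's proof: use Lemma~\ref{lemma:partitionstrong} to split off the full-height blocks costing $(m+1)\left\lfloor \frac{n}{k+1} \right\rfloor$ searchers (the paper takes $n_1=s$ directly, rather than your $n_1=s+(k+1)$, which your case analysis in fact reverts to), then decompose the leftover height-$s$ strip along the orthogonal axis into width-$(k+1)$ blocks costing $s+1$ each (or $4$ when $s=2$) plus the corner residual $d_k(P_r \boxtimes P_s)$, with the ``corner protected last'' condition controlling timing. The paper's additional content is precisely the explicit case analysis on $r$ (the constructions for $r=0,1,2,3,\ge 4$) verifying that condition for each building block, which you correctly identify as the remaining work.
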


\begin{proof}
When $s=0$, the result is a restatement of Lemma \ref{lem:StrongUpperBound1}.  So, we will assume that $s>0$.

By Lemma \ref{lemma:partitionstrong}, with $n_1 = s$ and $n_2 = n-s$,  we have $d_k(P_m \boxtimes P_n) \le \ell+\frac{(n-s)(m+1)}{k+1}=\ell+ (m+1)\left\lfloor \frac{n}{k+1}\right\rfloor$ if there is a successful initial layout in $P_m \boxtimes P_s$ (or $P_s \boxtimes P_m$) using $\ell$ searchers such that the last vertex of $P_m \boxtimes P_s$ protected is a corner.

{\it Case 1: $s=1$.}  Then $P_m \boxtimes P_s$ is the path $P_m$ and the required initial layout can be obtained with $\ell = \left \lceil \frac{m}{k+1} \right \rceil$ searchers by Lemma~\ref{lem:path_cycle_complete}.

{\it Case 2: $s=2$.}  Then we need a successful initial layout on $H = P_m \boxtimes P_2$ such that a corner vertex is protected in the final round.

If $r \ge 4$ or $r=0$, then there is a successful initial layout in $H$ in which four searchers are placed on $C_0$ {(two on each vertex)} and four searchers on each $C_j$ for $j >0$ and $j \equiv r \pmod{k+1}$.  When the game is played on $H$, column $C_{m-1}$ is protected in the last  stage.  Therefore, there is a corner that is not protected until the final  stage.  
{The number of searchers we have used on $H$ is $\ell = 4 \left \lceil \frac{m}{k+1} \right \rceil$.  If $r =0$, then $\ell=4 \left \lfloor \frac{m}{k+1} \right \rfloor$; otherwise $\ell= 4 \left \lfloor \frac{m}{k+1} \right \rfloor + 4$.}

If $r=3$, then place a searcher on  $(0,0), (1,0)$ and $(0,1)$ in $H$, as well as  
{two } searchers on {each vertex of} column $C_j$ for each $j >0$ such that $j \equiv r \pmod{k+1}$.  The searchers on  $(0,0), (1,0)$ and $(0,1)$ protect columns $C_0, C_1, C_2$ of $H$ (similar to the proof of Lemma \ref{lem:smallstrong}).  Furthermore, both vertices in $C_2$ are protected in the same  stage.  It follows that the remaining searchers then protect columns $C_3$ through $C_{m-1}$,  where both vertices in $C_{m-1}$ are protected in the final  stage.

If $r=2$, place a searcher on each of $(1,1)$, $(0,1)$ and $(1,0)$ and 
{two} searchers on each {vertex of} $C_j$ for all $j>0$ and $j \equiv r \pmod{k+1}$.  (Note that $m>k+1$ since $m >r$.) It follows that his initial successful layout in $P_2 \boxtimes P_m$ results in both vertices in column $C_{m-1}$ protected in the final  stage.   

If $r=1$ then place a single searcher on $(0,0)$ and 
{two} searchers on each {vertex of $C_j$} 
for $j>0$ and $j \equiv r \pmod{k+1}$.   

{Recall by Theorem~\ref{lem:smallstrong} that $d_k(P_2 \boxtimes P_r)=4$ if $r \geq 4$, and $d_k(P_2 \boxtimes P_r)=3$ if $2 \leq r \leq 3$.  If $r=1$, then $d_K(P_2 \boxtimes P_r) = d_k(P_2)=1$.  Thus,}
for all values of $r$, $H$ is protected using $4 \left \lfloor \frac{m}{k+1} \right \rfloor + d_k(P_r \boxtimes P_2)$ searchers and a corner vertex is protected at the final  stage.

{\it Case 2: $s\ge 3$.} Let $H' = P_r \boxtimes P_s$. 
 If $r=1$, place one searcher on $(0,0)$.  If $r = 2$, then place two searchers on each of $(0,0)$ and $(1,0)$.   If $r \ge 3$, then place $\min\{r,s\} +1$ searchers on $H'$ as described in {Theorem~\ref{lemma:strong_product_upper}}.     
 For all values of $r$, $d(P_r \boxtimes P_s)$ searchers can protect $H'$ such that a corner vertex is protected in the final  stage.    It follows from Lemma \ref{lemma:partitionstrong}, that $P_m \boxtimes P_s$ can be protected using $(s+1) \left \lfloor \frac{m}{k+1} \right \rfloor + d_k(P_r \boxtimes P_s)$ searchers so that a corner is protected in the final  stage. 
   \end{proof}

\section{Discussion}

In this paper, we have extended the deduction game to the scenario in which searchers may move up to $k$ times.  We have given bounds on the $k$-move deduction number and examined the game on Cartesian and strong products.  We close by considering several open questions for future research.

The majority of the results presented in this paper focus on bounds for the product of paths. A natural next direction is to consider the Cartesian product of a path and a cycle. The structure is very similar to the Cartesian product of paths, so intuition would suggest that similar tools could be used in these cases. This is indeed true in cases such as in Lemma~\ref{lemma:partition}. The only additional edges are between two copies of outer paths. In the initial configuration of $P_n \square P_m$, the searchers placed in the corners now correspond to the searchers in $P_n \square  C_m$ which are adjacent. Thus, they still each only have one neighbour that is not occupied by a searcher. Thus, the same strategy can be applied, except possibly at the very end stage of searching. Where this gets challenging is that there are fewer boundary pieces to the graph, and thus new adjacencies can present problems where searchers could have moved in $P_n \Box P_m$ now become stalled. Worse is more complex configurations for upper bounds, as presented in results such as Lemma~\ref{lem: multi upper bound}, trying to adapt this configuration will not work because some of the searchers as placed, can not move in the first stage. Though the local structures are similar to the Cartesian product of paths, we do not get as much carryover as one would expect. This is a natural next direction for future work.

Another intriguing question is whether $d_k(G) \leq d_{\ell}(G)$ whenever $k \geq \ell$.  Clearly it is true that $d_k(G) \leq d(G)$ for all $k \geq 1$, since any layout for which the searchers protect all vertices after the first move is also successful in the $k$-move game.  However, in general, a successful layout in the $\ell$-move deduction game with $k \geq \ell$ is not necessarily successful in $k$-move deduction.  As an example, consider the graph $G$ in Figure~\ref{fig:timing}, for which the indicated layout of searchers is successful in $2$-move deduction, but not in $3$-move deduction.

\begin{center}
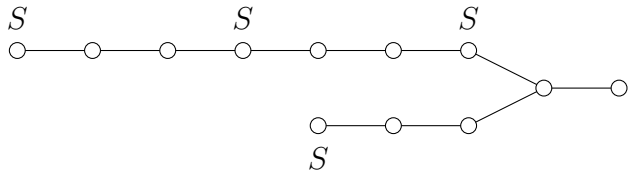
\begin{figure}[ht]
\centering
    \begin{tikzpicture}[x=1cm,y=1cm,scale=1]
    \draw(0,1) node[above=4pt]{$S$};
    \draw(3,1) node[above=4pt]{$S$};
    \draw(1,0) node[below=4pt]{$S$};
    \draw(-3,1) node[above=4pt]{$S$};
    \draw (-3,1) -- (3,1);
    \draw (1,0) -- (3,0);
    \draw (3,1) -- (4,0.5) -- (5,0.5);
    \draw (3,0) -- (4,0.5);
    \foreach \x in {-3,-2,-1,0,1,2,3}{
    \draw (\x,1)[fill=white] circle (3pt);
    }
    \foreach \x in {1,2,3}{
    \draw[fill=white] (\x,0) circle (3pt);
    }
    \foreach \x in {4,5}{
    \draw[fill=white] (\x,0.5) circle (3pt);
    }
    \end{tikzpicture}
\caption{A layout that is successful for $2$-move deduction but not $3$-move deduction} \label{fig:timing}
\end{figure}
\end{center}

We also note that some choices had to be made in determining the rules for $k$-move deduction, so alternative games could be formulated by a slight change in rule.  In particular, consider the scenario where there are more searchers on a vertex than unprotected neighbours of that vertex.  In the $1$-move game, excess searchers cannot protect additional vertices, so that it is irrelevant if they are permitted to move or not.  We elected to allow excess searchers to move to any of the previously unprotected neighbours; requiring them to remain motionless would produce a new variant. 

Similarly, the $1$-move game is not affected if searchers are permitted to move to protected or occupied vertices.  While we did not permit searchers whose neighbourhood is completely protected to move, allowing this could potentially reduce the number of searchers required. 

Finally, we note that in~\cite{BDOWXY}, it was proved the in the $1$-move case, the number of searchers required to protect the graph does not change if it is not required that all searchers who may move at a given stage do so; this variant was termed {\em free deduction}.  In the $k$-move game, however, changing this rule may make a difference to the success of a layout. Figure~\ref{fig:timing} again provides an example to illustrate this; it is not successful in $3$-move deduction due to the timing of the searcher on the bottom path moving to the the vertex of degree $3$, but if we have the freedom to choose not to move this searcher to the vertex of degree $3$, it becomes a successful layout. Characterizing when the deduction number changes if searchers are no longer required to move when able would be an interesting question. 

\section{Statements and Declarations}
This work was supported by the Natural Sciences and Engineering Research Council of Canada (NSERC). Grant numbers: RGPIN-2025-04633 (Burgess), RGPIN-2020-06528 (Clarke), RGPIN-2023-03395 (Huggan) and DGECR-2023-00190 (Huggan).

\noindent
The authors have no competing interests to declare.

\section{Acknowledgments}
Burgess, Clarke and Huggan gratefully acknowledge Discovery Grant support from NSERC. Huggan further acknowledges support from an NSERC Discovery Launch Supplement.

\end{document}